\documentclass[12pt]{article}

\usepackage[margin=2.5cm, top=2.5cm, bottom=2.5cm]{geometry}

\usepackage[utf8]{inputenc} 
\usepackage[T1]{fontenc}    

\usepackage{url}            
\usepackage{booktabs}       
\usepackage{amsfonts}       
\usepackage{nicefrac}       
\usepackage{microtype}      
  
\usepackage{amsmath,amssymb,amsthm,xcolor}
\usepackage{array}
\usepackage{float}
\usepackage{xr-hyper}

\usepackage{wrapfig}
\usepackage{thmtools}
\usepackage[hidelinks,hypertexnames=false]{hyperref}
\usepackage[capitalise]{cleveref}

\theoremstyle{plain}
\newtheorem{theorem}{Theorem}[section]
\declaretheorem[
  name=Fact,          
  sibling=theorem,      
  refname={Fact,Facts}   
]{fact}
\declaretheorem[name=Lemma,       sibling=theorem, refname={Lemma,Lemmas}]{lemma}
\declaretheorem[name=Corollary,   sibling=theorem, refname={Corollary,Corollaries}]{corollary}

\declaretheorem[name=Proposition, sibling=theorem, refname={Proposition,Propositions}]{proposition}
\declaretheorem[name=Assumption,  sibling=theorem, refname={Assumption,Assumptions}]{assumption}
\declaretheorem[name=Example,     sibling=theorem, refname={Example,Examples}]{example}

 \declaretheorem[
  name=Definition,
  sibling=theorem,
  refname={Definition,Definitions},
  style=definition
]{definition}

\usepackage{graphicx}
\usepackage{subcaption}
\usepackage{mdframed,framed}
\usepackage{lipsum}
\usepackage{tikz,calc}
\usepackage{enumitem}
\usetikzlibrary{calc}
\allowdisplaybreaks
\usepackage{pgfplots}
\usetikzlibrary{intersections, pgfplots.fillbetween}
\usepackage{epstopdf}
\usepackage{algorithmic}
\usepackage{booktabs}
\usepackage{changepage}
\usepackage{centernot}
\usepackage{stmaryrd}
\usepackage[nottoc]{tocbibind}
\newcommand{\forae}{%
  \tikz[baseline={(forall.base)}]{
    \node[inner sep=0pt, outer sep=0pt] (forall) {$\forall$};
    \fill[white] (-0.06em,0.05ex) rectangle (0.06em,0.25ex);
    \draw[line width=0.04em] (-0.06em, 0.7ex) -- (0.06em, 0.7ex);
  }
}
\usepackage{empheq}

\newcommand{\R}{\mathbb{R}}
\newcommand{\fg}{\mathfrak{g}}

\newcommand{\cP}{\mathcal{P}}
\newcommand{\dom}{\mathrm{dom}}
\newcommand{\rank}{\mathrm{rank}\hspace{.3mm}}

\newcommand{\gph}{\mathrm{gph}}

\newcommand{\cX}{\mathcal{X}}

\newcommand{\N}{\mathbb{N}}
\newcommand{\eR}{\overline{\R}}
\newcommand{\lip}{\mathrm{lip}}
\def\Im{\mathrm{Im}\hspace{.3mm}}
\def\ker{\mathrm{Ker}\hspace*{.3mm}}
\newcommand{\cl}{\mathrm{cl}}
\def\GL{\mathrm{GL}}
\def\inte{\mathrm{int}}

\def\p{^}
\newcommand{\co}{\mathrm{co}\hspace*{.3mm}}

\usepackage{accents}
\def\cf{\accentset{\circ}{f}}

\def\sign{\mathrm{sign}}
\def\sgn{\mathrm{sgn}}
\def\sp{\mathrm{span}}
\usepackage{accents}
\def\cf{\accentset{\circ}{f}}

\newcommand{\im}{\mathrm{Im}\hspace{.3mm}}
\def\O{\mathrm{O}}

\title{\LARGE Implicit regularization of normalized
gradient descent}

\begin{document}

\author{\large C\'edric Josz\thanks{\url{cj2638@columbia.edu}, IEOR, Columbia University, New York.}}
\date{}

\maketitle

\begin{center}
    \textbf{Abstract}
    \end{center}
    \vspace*{-3mm}
 \begin{adjustwidth}{0.2in}{0.2in}
 ~~~~ How to find flat minima? We propose 
running normalized gradient descent, usually reserved for nonsmooth optimization, with sufficiently slowly diminishing step sizes. This induces implicit regularization towards flat minima if an appropriate Lyapunov functions exists in the gradient dynamics. Our analysis shows that implicit regularization is intrinsically a question of nonsmooth analysis, for which we deploy the full power of variational analysis and stratification theory.
\end{adjustwidth} 
\vspace*{3mm}
\noindent{\bf Keywords:} Differential inclusions, Lyapunov stability, semi-algebraic geometry.
\vspace*{3mm}

\noindent{\bf MSC 2020:} 14P10, 34A60, 49-XX.

\tableofcontents

\section{Introduction}
\label{sec:Introduction}

Normalized gradient descent, proposed by Shor in 1962 \cite{Shor1962,shor2012minimization}, seeks to minimize a function $f:\R\p n \to \R$ by choosing a step size schedule $\{\alpha_k\}_{k\in \N}\subseteq(0,\infty)$, an initial point $x_0\in \R\p n$, and iterating
$$\forall k\in \N,~~~ x_{k+1} = x_k - \alpha_k \frac{\nabla f(x_k)}{|\nabla f(x_k)|},$$
assuming $f$ is differentiable at $x_k$ and $\nabla f(x_k)\neq 0$ for all $k \in \N$. For nonsmooth convex functions that attain their infimum, the advantage of normalizing is that it guarantees boundedness of the iterates and
convergence of $\min_{i\in\llbracket 1,k \rrbracket}f(x_i)$
towards the minimum if
$$\sum_{k=0}\p \infty \alpha_k=\infty ~~~\text{and}~~~ \sum_{k=0}\p \infty \alpha_k\p 2<\infty.$$
Notably, coercivity and Lipschitz continuity are not required. In fact, quasiconvexity suffices, as shown by Nesterov \cite{nesterov1984minimization,nesterov2024primal,kiwiel2001convergence}. 

For smooth functions, the advantage is less clear as it provides the suboptimal rate $\min_{i\in \llbracket 0,k\rrbracket} |\nabla f(x_i)| = O(\ln k/\sqrt{k})$, with $\alpha_k = c/\sqrt{k}$ for some $c>0$. 
Empirically, it has been reported that it can help with saddle point evasion and
vanishing or exploding gradients \cite{murray2019revisiting,suzuki2021normalized}. More recently, it was suggested that it can promote implicit regularization \cite{arora2022understanding}.

While there is no formal definition of implicit regularization, explicit regularization means adding a regularizer $g:\R\p n \to \R$ to the objective function $f:\R\p n\to \R$, modulo a parameter $\lambda\geq 0$:
$$\inf_{x\in \R\p n} f(x)+\lambda g(x).$$
When $\lambda >0$, this promotes solutions with lower values of $g$ than if $\lambda=0$. Implicit regularization is the idea that algorithms can do so automatically.

Understanding implicit regularization \cite[Chapter 12]{bach2024learning} is a central challenge in deep learning theory, initiated by Neyshabur \textit{et al.} \cite{neyshabur2015search,neyshabur2015path,neyshabur2017exploring}, and studied in the context of matrix factorization \cite{gunasekar2017implicit,wang2022large}, matrix completion \cite{ma2020implicit}, deep matrix factorization \cite{arora2019implicit}, linear neural networks \cite{gidel2019implicit}, diagonal linear networks \cite{ma2022blessing}, tensor factorization \cite{razin2021implicit}, ReLU neural networks \cite{timor2023implicit,ahn2023learning}, two-layer linear networks \cite{varre2024spectral}, infinitely wide two-layer networks \cite{chizat2020implicit}, and infinitely wide linear networks \cite{chizat2024infinite}. 

Several variants of gradient descent, aside from normalization, have been proposed to promote implicit regularization: large step sizes \cite{nar2018step,wu2018sgd,cohen2021gradient}, sharpness aware minimization \cite{foret2021sharpness,dai2023crucial,bartlett2023dynamics}, unnormalized sharpness aware minimization \cite{andriushchenko2022towards,zhou2024sharpness}, two-step unnormalized sharpness aware minimization \cite{bolte2025convergence}, randomly smoothed perturbed gradient descent \cite{ahn2024escape}, and infinitesimally perturbed-gradient descent \cite{ma2025implicit}.

A lingering question that remains is: 
\begin{center}
    \textit{What quantity should these algorithms be implicitly regularized by?}
\end{center}

Aside from problem specific quantities like certain matrix norms norms or the rank \cite{razin2020implicit}, $\lambda_1(\nabla\p 2 f(x))$ \cite{mulayoff2021implicit,arora2022understanding,marion2024deep} and $|\nabla f(x)|\p 2$ \cite{hairer1997life,barrett2021implicit} have been put forth when $f$ is smooth, where $\lambda_1(\cdot)$ is the top eigenvalue. Our recent work on the Lyapunov stability of the Euler method \cite{josz2025lyapunov}, dubbed d-stability, seeks to address this very question via the notion of $p$-d-Lyapunov function $g:\R\p n\to \R$, where $p\in[1,\infty)$ and `d' stands for discretization. It satisfies 
\begin{equation}
    \label{eq:descent}
    \forall k\in\N,~~~ g(x_{k+1})-g(x_k)\leq -\omega \alpha_k\p p,
\end{equation}
for some $\omega>0$ near a given point, with sufficiently small step sizes. For normalized gradient descent, locally Lipschitz $p$-d-Lyapunov functions $g$ must be Lyapunov functions (i.e., decreasing quantities) of the gradient dynamics
$$\dot{x} = -\nabla f(x)$$
when $f$ is $C\p 1$ semi-algebraic, as we will see. In particular, 
conserved quantities are good candidates for $p$-d-Lyapunov functions. 

In modern applications, the objective function $f$ is typically not coercive, usually due to symmetries, yielding an unbounded set of global minima. Applying the theory of d-stability to normalized gradient descent unlocks the following property when $f$ is locally Lipschitz semi-algebraic. A continuous function $g$ acts as an implicit regularizer towards the global minima of $f+g$ if $f+g$ is coercive, $g$ is $p$-d-Lyapunov near any global minimum of $f$ that is not a global minimum of $g$, and
$$\sum_{k=0}\p \infty \alpha_k\p p=\infty.$$
If additionally $\alpha_k\to 0$, then convergence is also guaranteed. This can be achieved by using
\begin{equation}
\label{eq:step_size}
    \forall k \in \N, ~~~ \alpha_k = \frac{\beta}{(k+1)\p {1/\gamma}},
\end{equation}
with $\gamma\in [p,\infty)$ for some sufficiently small $\beta>0$. Under some mild conditions, the global minima of $f+g$ contain the flat global minima of $f$. Let us illustrate this on an example, which is special case of \cref{eg:monomial}.

\begin{example}
\label{eg:imp}
Normalized gradient descent applied to $f(x,y) = (xy-1)\p 2$ with step size \eqref{eq:step_size} and $\gamma\geq 2$ almost surely converges to its flat minima $\pm(1,1)$.
\end{example}
\begin{proof}
The gradient dynamics of the objective function, namely
    $$\left\{\begin{array}{ccl}
        \dot{x} & = & -2(xy-1)y, \\
        \dot{y} & = & -2(xy-1)x,
    \end{array}\right.$$ admits the conserved quantity $x\p 2-y\p 2$ since 
    $$\frac{d}{dt}(x\p 2-y\p 2) = 2\dot{x}x-2\dot{y}y = -4(xy-1)xy+4(xy-1)xy=0.$$
    Accordingly, let $g:\R\p 2\to \R$ be defined by
    $$g(x,y) = (x\p 2-y\p 2)\p 2.$$
    The global minima of $f+g$ are $\pm (1,1)$ since $f+g\geq 0$ and
    $f(x,y)+g(x,y)=0$ iff $xy=1$ and $x\p 2 = y\p 2$ iff $(x,y)=\pm (1,1)$. They are the only local minima of $\lambda_1(\nabla\p 2f(x,y)) = 2(x\p 2+y\p 2)$ subject to $f(x,y)=(xy-1)\p 2=0$, strict local minima at that. They are hence the flat global minima of $f$. Moreover, the sum $f+g$ is coercive since
    $$\forall (x,y)\in \R\p 2, ~~~f(x,y)+g(x,y)\geq \frac{1}{4}(x\p 4+y\p 4)-1.$$
    Indeed,
    $$x\p 4 + y\p 4 = (x\p 2-y\p 2)\p 2 + 2x\p 2y\p 2 = (x\p 2-y\p 2)\p 2 + 2(xy-1+1)\p 2 \leq g(x,y)+4f(x,y)+4.$$ 
    Finally, for all $(x,y)\in \R\p 2$ with $xy\neq 1$ near a given point $(\widetilde{x},\widetilde{y})$ where $g$ is positive, and for all sufficiently small $\alpha>0$, we have
    \begin{align*}
        g(x\p+,y\p +) & = \left[(x\p +)\p 2 - (y\p +)\p 2\right]\p 2 \\
                      & = \left[\left(x-\alpha\frac{2\hspace{.3mm}\sign(xy-1)y}{\sqrt{y\p 2 + x\p 2}}\right)\p 2-\left(y-\alpha\frac{2\hspace{.3mm}\sign(xy-1) x}{\sqrt{y\p 2 + x\p 2}}\right)\p 2\right]\p 2 \\
                      & = \left[x\p 2+4\alpha\p 2\frac{y\p 2}{y\p 2 + x\p 2}-y\p 2-4\alpha\p 2\frac{x\p 2}{y\p 2 + x\p 2}\right]\p 2 \\
                      & = \left(x\p 2-y\p 2\right)\p 2 \left(1-\frac{4\alpha\p 2}{x\p 2 + y\p 2}\right)\p 2 \\
                      & \leq g(x,y)\left(1 - \frac{8\alpha\p 2}{x\p 2 + y\p 2}\right) \\
                      & \leq g(x,y) - \frac{4g(\widetilde{x},\widetilde{y})}{\widetilde{x}\p 2 + \widetilde{y}\p 2}\alpha\p 2,
    \end{align*}
    implying that $g$ is $2$-d-Lyapunov near $(\widetilde{x},\widetilde{y})$. 
\end{proof}

Normalized gradient descent is displayed in \cref{fig:imp_reg} with step sizes $0.4/(k+1)\p {1/4}$ applied to $f$, while gradient descent is displayed in \cref{fig:exp_reg} with constant step size $0.1$ applied to $f+\lambda g$ with $\lambda = 0.1$. The choice of the exponent $\gamma\in[p,\infty)$ in the step size \eqref{eq:step_size} entails a trade-off: low values favor a decrease of $f$, while high values encourage a decrease of $g$ (although neither are monotone). The value $\gamma=4$ strikes a balance. Taking `$\gamma=\infty$' corresponds to constant step sizes and is a legitimate choice (see \cref{fig:monomial_xy}). On the other hand, $1\leq\gamma<p$ can lead to premature convergence of $g$, and the same can happen with $f$ if $0<\gamma<1$.

\begin{figure}[H]
\centering
\begin{subfigure}{.49\textwidth}
  \centering
  \includegraphics[width=1\textwidth]{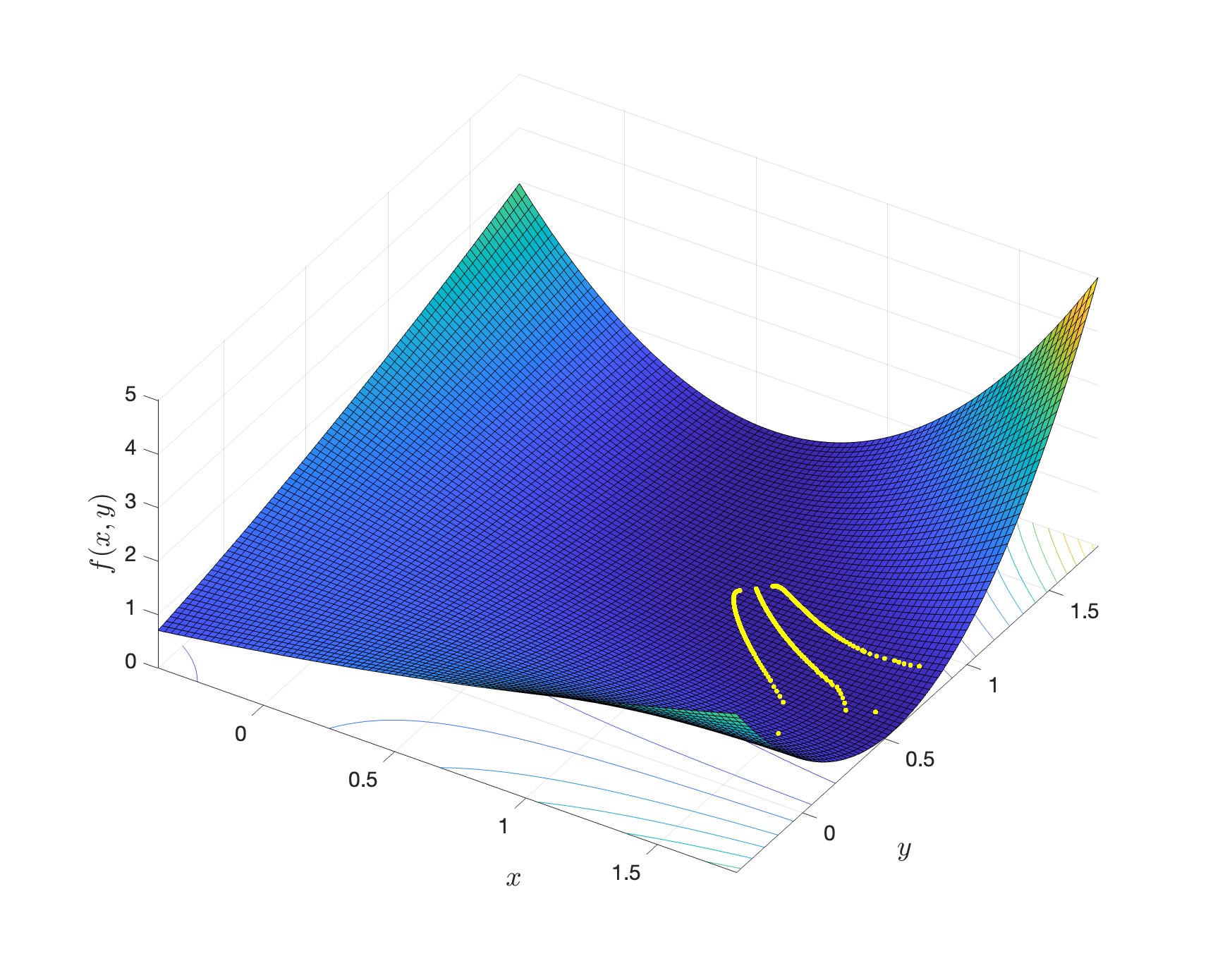}
  \caption{Implicit regularization.}
  \label{fig:imp_reg}
\end{subfigure}
\begin{subfigure}{.49\textwidth}
\centering
  \includegraphics[width=1\textwidth]{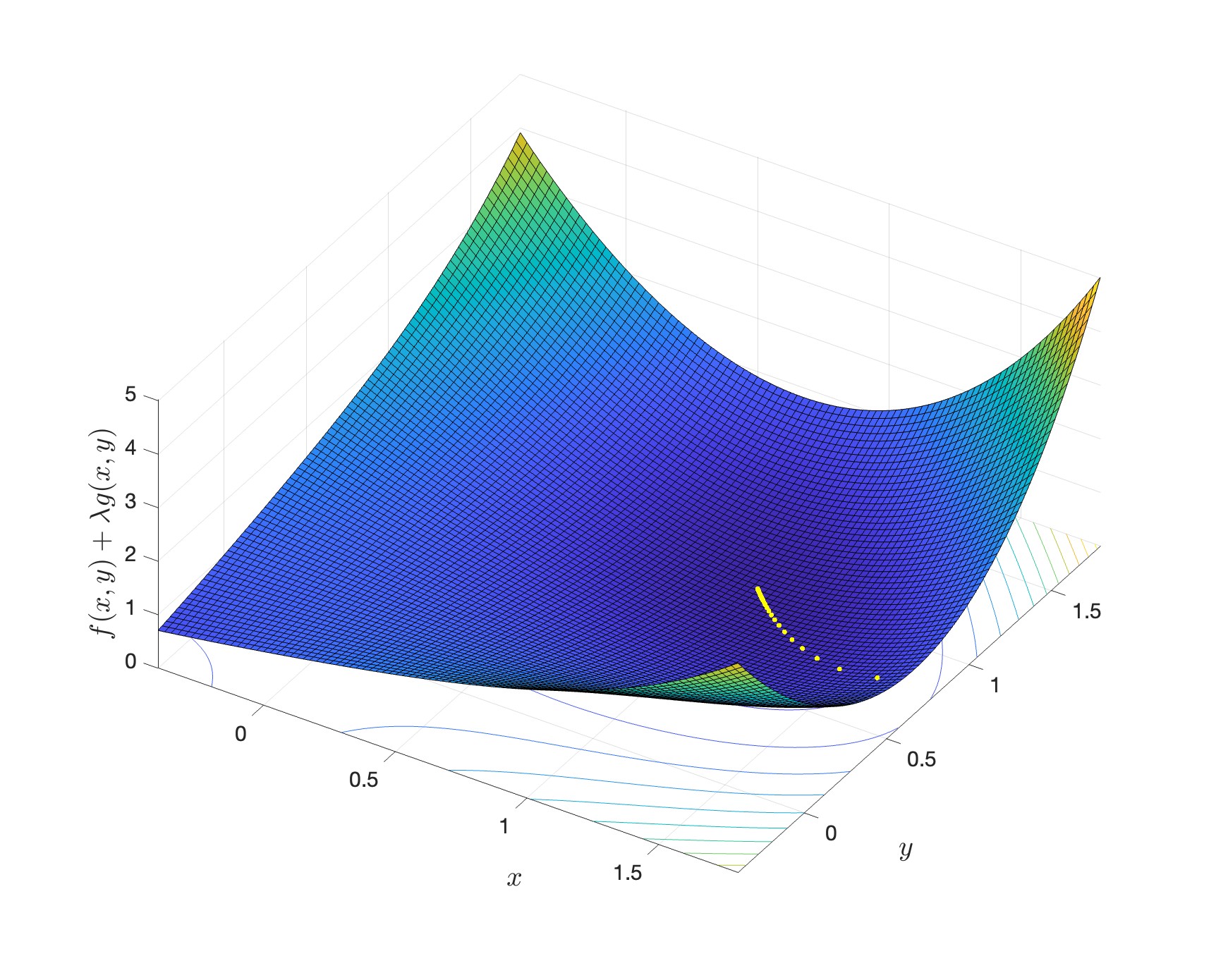}
  \caption{Explicit regularization.}
  \label{fig:exp_reg}
\end{subfigure}
\caption{$f(x,y)=(xy-1)\p 2$, $g(x,y)=(x\p 2-y\p 2)\p 2$, and $\lambda = 0.1$.}
\end{figure}

We propose a series of examples in \cref{sec:Examples}, summarized in \cref{tab:eg}. The function $g$ is a Lyapunov function in all cases, and it is conserved quantity in all but the first example in \cref{tab:eg}.

\begin{table}[H]
\begin{center}
\begin{tabular}{ |c|c|c|c| } 
 \hline
 Example & $f$ & $g$ & $\overline{p}$ \\
 \hline
 \ref{eg:y4} & $y\p 2+ x\p 2 y\p 4$ & $|x|$ & $(0,0)$ \\
 \ref{eg:parabola} & $(x\p 2 - y)\p 2$ & $x\p 2 e\p{4y}$ & $(0,0)$ \\
 \ref{eg:cubic} & $(x\p 3-y\p 3 -1)\p 2$ & $\exp(-\sgn(x+y)(1/x+1/y))$ & $(1,0)$ \\
 \ref{eg:ellipse} & $(2x\p 2 + y\p 2-1)\p 2$ & $|x|/|y|\p 2$ & $\pm(0,1)$ \\
 \ref{eg:monomial} & $(xy\p 2-1)\p 2$ & $(2x\p 2 - y\p 2)\p 2$ & $(2^{-1/3},\pm 2^{1/6})$ \\ 
 \ref{eg:mf1_01} & $|xz|+|yz-1|$ & $(x\p 2 + y\p 2 - z\p 2)\p 2$ & $\pm (0,2^{1/4},2^{-1/4})$ \\ 
 \hline
\end{tabular}
\caption{Objective function $f$, implicit regularizer $g$, and flat minimum $\overline{p}$.}
\label{tab:eg}
\end{center}
\end{table}

We next summarize the contributions of this manuscript:
\begin{enumerate}
    \item We define the normalized subdifferential as a set-valued mapping $\widehat\nabla f:\R\p n\rightrightarrows\R\p n$, enabling a rigorous definition of normalized gradient descent for any objective function $f:\R\p n\to\R$, in particular when $\nabla f(x_k)=0$ or $f$ is not differentiable at $x_k$. This translates the study of normalized gradient descent for smooth functions, which could be viewed as gradient descent with adapative step sizes, into a question of nonsmooth analysis.
    \item We study basic variational properties of $\widehat\nabla f$. For locally Lipschitz functions, it is upper semicontinuous with nonempty compact values, a crucial property for analyzing stability. What makes the analysis particularly interesting is that it is not a conservative field for $f$, contrary to commonly encountered set-valued mappings like the Bouligand subdifferential $\overline{\nabla} f$ or the Clarke subdifferential $\overline{\partial }f$.
    \item We move on to stratification properties of $\widehat\nabla f$. It helps us to understand the effect of discretizing normalized gradient dynamics on stability for locally Lipschitz semi-algebraic functions. While stability and local optimality are equivalent in continuous time, the latter becomes a necessary condition in discrete time. Importantly, it is no longer sufficient, paving the way to implicit regularization.
    \item We propose necessary and sufficient conditions for checking whether a given function can serve as an implicit regularizer. 
    For sufficiency, we rely on a normalized projection formula and metric subregularity. We show that the former holds if the objective function admits a certain composite structure or it is invariant under the action of a Lie group. We also show how to check the decrease condition \eqref{eq:descent} without resorting to a sequence, simply by computing derivatives of $f$ and $g$ at a global minimum of $f$.
    \item We forge a link between stability and flatness. 
    It draws on the analysis of the gradient dynamics of the implicit regularizer, carried out in our recent work on the geometry of flat minima \cite{josz2025flat}. More generally, we explore the relationship between stability, flatness, conservation, and symmetry. In particular, we show how to build an implicit regularizer using conserved quantities derived from linear symmetries, coming full circle. 
\end{enumerate}

This paper is organized as follows. \cref{sec:background} contains background material on set-valued analysis, implicit regularization, and Lyapunov stability. \cref{sec:Theory} develops the theory that underpins implicit regularization of normalized gradient descent. \cref{sec:Examples} illustrates the theory on various numerical examples. Finally, \cref{sec:Appendix} contains the Appendix.

\section{Background}
\label{sec:background}

We first introduce some notations and vocabulary. As usual, $\N=\{0,1,2,\hdots\}$, $\N\p * = \N\setminus \{0\}$, $\R\p * = \R\setminus\{0\}$, $\R_+=[0,\infty)$, and $\eR=\R\cup\{\infty\}$. Given two integers $a \leq b$, let $\llbracket a,b \rrbracket = \{a,a+1,\hdots,b\}$. The symbols $\land$ and $\forae$ respectively mean `and' and `for almost every'. The indicator and characteristic functions of a subset $A\subseteq \R\p n$ are denoted by $\delta_A:\R\p n \to\eR$ and $\chi_A:\R\p n \to \{0,1\}$ respectively. 

Let $|\cdot|=\sqrt{\langle\cdot,\cdot\rangle}$ denote the Euclidean norm in $\R\p n$, and $B_r(x)$ denote the corresponding open ball of radius $r$ centered at $x\in\R\p n$. Let $S\p {n-1}$ denote the unit sphere in $\R\p n$. Let $\|\cdot\|_F=\sqrt{\langle\cdot,\cdot\rangle_F}$ and $\|\cdot\|_1$ respectively denote the Frobenius norm and the entrywise $\ell_1$-norm of a matrix. Let $\lambda(\cdot)$ and $\sigma(\cdot)$ respectively denote the vector of eigenvalues and singular values of a matrix in decreasing order of magnitude, and let $\rho(\cdot)$ denote the spectral radius.

A function $\varphi:A\to\eR$ where $A\subseteq \R$ is increasing (resp. strictly increasing) if $\varphi(s)\leq \varphi(t)$ (resp. $\varphi(s)<\varphi(t)$) for all $s,t\in A$ such that $s<t$. It is decreasing (resp. strictly decreasing) if $-\varphi$ is increasing (resp. strictly increasing).
Let $D\p k$ (resp. $C\p k$) denote the set of functions from $\R\p n$ to $\R$ that are $k$-times differentiable (resp. continuously differentiable). Let $C\p {k,\ell}$ denote the set of $C\p k$ functions whose $\ell$\textsuperscript{th} derivative is locally Lipschitz continuous.

Let $\overline{(\cdot)}$, $\mathrm{int}$, $\mathrm{relint}$, and $\mathrm{co}$ respectively denote the closure, interior, relative interior, and convex hull. Given a point $x \in \mathbb{R}^n$, we use 
\begin{equation*}
    d(x,X) = \inf \{|x-y|: y \in X\}~~~\text{and}~~~
    P_X(x) = \arg\min \{ |x-y| : y \in X\}
\end{equation*}
to denote the distance to and the projection on a subset $X\subseteq \mathbb{R}^n$, respectively. The following notations will come in handy:
$$|X|_- = \inf\{|x|:x\in X\}~~~\text{and}~~~|X|_+ = \sup\{|x|:x\in X\}.$$
The distance \cite[(2.1) p. 197]{kato2013perturbation} between two linear subspaces $V,W$ of $\mathbb{R}^n$ is given by 
\begin{equation*}
    d(V,W) = \sup \{ d(v,W) : v \in V, |v|=1\}
\end{equation*}
and $d(V,W) = 0$ if $V = \{0\}$. We use $V\p \perp$ to denote the orthogonal subspace to $V$.

A subset $A$ of $\mathbb{R}^n$ is semi-algebraic \cite{bochnak2013real} if it is a finite union of basic semi-algebraic sets, which are of the form 
    \begin{equation*}
        \{ x \in \mathbb{R}^n : f_1(x) > 0 , \hdots , f_p(x) > 0, f_{p+1}(x) = 0, \hdots , f_q(x) = 0\}
    \end{equation*}
    where $f_1,\hdots,f_q$ are polynomials with real coefficients. A function $f:\mathbb{R}^n\rightarrow\overline{\mathbb{R}}$ is semi-algebraic if $\gph f$ is semi-algebraic. All the results in this manuscript hold more generally in an o-minimal structure on the real field \cite{van1998tame}.

Let $T_p M$ and $N_p M$ respectively denote the tangent and normal spaces at $p\in M$ of a submanifold $M$ of $\R\p n$ \cite{lee2012smooth}. $TM$ and $NM$ denote the corresponding bundles. The differential of a smooth map $F:M\to N$ at $p\in M$ is denoted $dF_p:T_pM\to T_{F(p)}N$. The sets $\Im A$ and $\ker A$ respectively denote the image and Kernel of a linear map $A:V\to W$. The adjoint of a linear map $A:V\to W$ between inner product spaces $V,W$ is denoted $A\p *$.

\subsection{Set-valued analysis}
 
Given $f:\R^n\to\eR$ and $\ell \in {\mathbb{R}}$, let 
$$[f = \ell] = \{ x \in \mathbb{R}^n : f(x) = \ell \}$$
and define $[f \leq \ell]$ similarly. In particular, $\arg\min f = [f=\min f]$. The domain and the graph of $f$ are defined by 
$$\dom f = \{ x \in \R\p n : f(x)<\infty\} ~~~\text{and}~~~\gph f = \{ (x,\ell) \in \R\p {n+1} : f(x)=\ell\}.$$
For a set-valued mapping $F:\mathbb{R}^n \rightrightarrows \mathbb{R}^m$, the image of $X\subseteq \R\p n$ and the preimage of $y \in \R\p m$ are given by 
$$F(X) = \bigcup_{x \in X} F(x) ~~~\text{and}~~~ F^{-1}(y) = \{ x\in \mathbb{R}^n : F(x) \ni y \}.$$
A mapping $F:\R\p n \rightrightarrows \R\p m$ is locally bounded \cite[Definition 5.14]{rockafellar2009variational} at $\overline{x}\in \R\p n$ if there exists a neighborhood $U$ of $\overline{x}$ such that $F(U)$ is bounded. It is upper semicontinuous \cite[Definition 1 p. 41]{aubin1984differential} at $\overline{x} \in \R\p n$ if for any neighborhood $V$ of $F(\overline{x})$, there exists a neighborhood $U$ of $\overline{x}$ such that $F(U)\subseteq V$.
    It is outer semicontinuous at $\overline{x} \in \mathbb{R}^n$ if $\limsup_{x\rightarrow \overline{x}} F(x) \subseteq F(\overline{x})$ and inner semicontinuous at $\overline{x} \in \mathbb{R}^n$ if $\liminf_{x\rightarrow \overline{x}} F(x) \supseteq F(\overline{x})$ where
    \begin{align*}
        \limsup_{x\rightarrow \overline{x}} F(x) & = ~ \bigcup_{x_k \rightarrow \overline{x}} \limsup_{k\rightarrow \infty} F(x_k) \\
        & = ~ \{ y \in \mathbb{R}^m : \exists (x_k,y_k) \in \gph F \rightarrow (\overline{x},y) \}, \\[2mm]
        \liminf_{x\rightarrow \overline{x}} F(x) & = ~ \bigcap_{x_k \rightarrow \overline{x}} \liminf_{k\rightarrow \infty} F(x_k) \\
        & = ~ \{ y \in \mathbb{R}^m : \forall x_k \rightarrow \overline{x}, \exists y_k \rightarrow y:  (x_k,y_k) \in \gph F ~ \text{eventually} \}.
    \end{align*}
    It is continuous at $\overline{x}$ if it is both outer and inner semicontinuous at $\overline{x}$.
The notions hold without referring to a point if they hold at every point in $\R\p n$.

A curve is a function $x:I\to \R\p n$ where $I$ is an interval of $\R$. A trajectory of $F:\R\p n \rightrightarrows\R\p n$ is an absolutely continuous curve $x:I\to\R\p n$ such that
$$\forae t\in I, ~~~ x'(t) \in F(x(t)).$$
We refer to solutions to the differential inclusion $\dot{x}\in F(x)$ as trajectories of $F$ over an interval $I = [0,T)$ for some $T\in(0,\infty]$ or $I = [0,T]$ for some $T\in(0,\infty)$. A solution $x:I\to\R\p n$ is maximal if for any other solution $y:J\to\R\p n$ such that $I\subseteq J$ and $x(t)=y(t)$ for all $t\in I$, we have $I=J$. A solution is globally defined if $I = [0,\infty)$. If $F:\R\p n \rightrightarrows\R\p n$ is upper semicontinuous with nonempty compact convex values, then the initial value problem 
$$\left\{
\begin{array}{cc}
     \dot{x} \in F(x),  \\
     x(0) = x_0,
\end{array}
\right.$$
admits a solution for any initial condition $x_0\in \R\p n$ by \cite[Theorem 3 p. 98]{aubin1984differential}. As a result, bounded maximal solutions are globally defined.

Recall that, given $F:\mathbb{R}^n \rightrightarrows \mathbb{R}^n$, a point $\overline{x} \in \mathbb{R}^n$ is stable if
$$ \forall \epsilon >0, ~ \exists \delta>0: ~~~
        x(0) \in B_{\delta}(\overline{x}) ~~~ \Longrightarrow ~~~ 
        x([0,T)) \subseteq B_{\epsilon}(\overline{x}) $$
where $x:[0,T)\rightarrow \mathbb{R}^n$ is any trajectory of $F$. 
It is asymptotically stable \cite[Definition 2.1]{clarke1998asymptotic} if it is stable and there exists $\delta_0>0$ such that all maximal trajectories initialized in $B_{\delta_0}(\overline{x})$ are globally defined and converge to $\overline x$. We say that $f:\R\p n\to\eR$ is a Lyapunov function for $F$ on $U\subseteq \R\p n$ if $f\circ x$ is decreasing for any trajectory of $F$ with values in $U$.

Stability can be adapted to difference inclusions: given $G:\mathbb{R}^n \rightrightarrows \mathbb{R}^n$, a point $\overline{x} \in \mathbb{R}^n$ is stable if
$$ \forall \epsilon >0, ~ \exists \delta>0: ~~~
        x_0 \in B_{\delta}(\overline{x}) ~~~ \Longrightarrow ~~~ 
        \{x_k\}_{k\in\N} \subseteq B_{\epsilon}(\overline{x}) $$
for any $\{x_k\}_{k\in \N} \subseteq \mathbb{R}^n$ such that  $x_{k+1} \in G(x_k)$ for all $k\in\N$. It is asymptotically stable if it is stable and there exists $\delta_0>0$ such that if $x_0 \in B_{\delta_0}(\overline{x})$, then $x_k\to\overline x$. By analogy to difference inclusions, we will refer to difference inclusions as $x\p + \in G(x)$. The above definitions of stability can be weakened to an almost sure sense if the initial point lies outside a null set.

\subsection{Implicit regularization}

As stated in the introduction, implicit regularization is the idea that algorithms behave as though one were using an explicit regularizer. There are two main avenues to achieving this: either by carefully choosing the initial point, or by preventing convergence to certain solutions. 

We illustrate the first avenue with two applications. The first application concerns linear least-squares and is inspired by Bach \cite[Section 12.1.1]{bach2024learning}. Let $A \in \R\p {m \times n}\setminus\{0\}$ and $b\in \R\p m$. Consider 
$f,g:\R\p n \to \R$ defined by
$$f(x) = |Ax-b|\p 2 ~~~\text{and}~~~g(x)=|x|\p 2.$$
If gradient descent with constant step size $\alpha \in (0,1/(2\sigma_1(A)\p 2))$ is initialized at $x_0 = A\p T \lambda$ for some $\lambda \in \R\p m$, then 
$$\lim x_k \in \arg\min \{ g(x) : f(x)=\min f \}.$$
This follows from the more general fact $\lim x_k = P_{\arg\min f}(0) + P_{\ker A}(x_0)$. The proof is immediate after reducing to the diagonal case via a singular value decomposition of $A$ and \cref{prop:change}.

The second application, matrix factorization, dates back to Baldi and Hornik \cite{baldi1989neural}. We consider the overparmatrized setting: given $M \in \R\p{m\times n}$ and $r \geq \rank(M)$, consider $f,g:\R\p{m\times r}\times \R\p{r\times n}\to\R$ defined by
$$f(X,Y) = \|XY-M\|_F\p 2 ~~~\text{and} ~~~ g(X,Y) = \|X\p TX-YY\p T\|_F\p 2.$$
The objective function $f$ admits a conserved quantity $C:\R\p{m\times r}\times \R\p{r\times n}\to\R\p{r\times r}$ defined by $C(X,Y) = X\p TX-YY\p T$.
Let $S\subseteq \R\p{m\times r}\times \R\p{r\times n}$ be a bounded open set containing $(0,0)$. There exists $\overline{\alpha}>0$ such that for all step sizes $\alpha \in (0,\overline{\alpha}]$ and almost every initial point $(X_0,Y_0)\subseteq S$,
gradient descent converges to a global minimum $(X_\infty\p \alpha,Y_\infty\p \alpha)$ by \cite[Example 1]{josz2023global}. 
If $S \ni (X_0,Y_0)\to(0,0)$ and $\alpha \to 0$, then $(X_\infty\p \alpha,Y_\infty\p \alpha)\to (X_\infty,Y_\infty)$ up to a subsequence where $C(X_\infty,Y_\infty) = C(0,0)= 0$. 
Thus
$$(X_\infty,Y_\infty) \in \arg\min \{ g(X,Y) : f(X,Y) = 0 \}.$$
In particular, $(X_\infty,Y_\infty)$ is a flat global minimum of $f$ by \cite[Lemma 4.19, Corollary 4.24, Proposition 4.26]{josz2025flat}.

The second avenue was initiated by Nar and Sastry \cite[Proof of Theorem 1]{nar2018step} and is based on the center stable manifold theorem \cite{shub2013global}.
Suppose $G:\R\p n \to \R\p n$ is $C\p 1$ near a fixed point $\overline{x}\in \R\p n$. The dynamical system $x\p + = G(x)$ satisfies:
\begin{itemize}
    \item If $\rho(DG(\overline{x}))<1$, then $\overline{x}$ is asymptotically stable.
    \item If $\overline{x}$ is almost surely stable, then $\rho(DG(\overline{x}))\leq 1$.
\end{itemize}
This applies to the Euler discretization of $\dot{x}=F(x)$ with constant step size $\alpha>0$ by letting $G= \mathrm{Id}_{\R\p n}+\alpha F$, when $F:\R\p n\to \R\p n$ is $C\p 1$. If $F$ is semi-algebraic, one say more: for all but finitely many $\alpha$, and for almost every $x_0\in\R\p n$, if 
$x_k \to \overline{x}$, then $\rho(DG(\overline{x}))\leq 1$ \cite[Theorem 2.1]{bolte2025convergence} (see also \cite[Proposition 2.5]{cheridito2024gradient} and \cite[Footnote 7, Proposition 4.3]{ouyang2025kurdyka}). 

In particular, when $F = -\nabla f$ for a $C\p 2$ function $f:\R\p n \to \R$, then $DG = \mathrm{Id}_{\R\p n} -\alpha\nabla\p 2 f$ is symmetric and $\lambda(DG) = 1-\alpha \lambda(\nabla \p 2 f)$. Hence, when $f$ is semi-algebraic, for almost every $(\alpha,x_0)\in (0,\infty)\times \R\p n$, if gradient descent converges to a point $\overline{x}$, then $\lambda_i(\nabla\p 2 f(\overline{x}))\subseteq [0,2/\alpha]$ for all $i\in\llbracket 1,n\rrbracket$. This suggests that gradient descent with large step size is implicitly regularized by $\lambda_1(\nabla f)$. 

Proving convergence with large step size is challenging however: even \cref{eg:imp} is currently out of reach. Converging to a flat minimum almost surely implies a single choice of step size $\alpha = 1/2$ since $\{\lambda_1(\nabla\p 2 f(x,y)): xy=1\}=[4,\infty)$. Numerically, this leads to intricate nonconverging behavior around the flat minima. As for the aforementioned variants of gradient descent, their analysis is much more convoluted. It is also not clear if it enables finding flat minima in \cref{eg:imp}. 


\subsection{Lyapunov stability}
\label{subsec:Lyapunov_stability}

Instead of relying on the center stable manifold theorem, we propose to use a recent extension \cite{josz2025lyapunov} of Lyapunov's stability criterion for analyzing the stability of the Euler method. There are several reasons for doing so. Either ones views normalized gradient descent as a nonsmooth dynamical system with a predefined step size schedule, or one views it as a smooth dynamical system (if the objective is smooth) with adaptive step sizes. In either case, the center stable manifold theorem does not apply (even in \cite{mucsat2025gradient} the adaptive step sizes are eventually constant). 

Regardless, the theorem only yields local convergence. When initializing near a point where it prevents convergence almost surely, nothing is known about the asymptotic behavior unless convergence is assumed. Assuming convergence is rather strong assumption which also does not allow for mere stability, a setting of practical interest. 

We next spell out the main ingredients in \cite{josz2025lyapunov}. 
As explained in \cite{josz2025lyapunov}, in order to study the stability of Euler discretizations of discontinuous dynamics, it is useful to introduce the notion of d-stability.

\begin{definition}
\label{def:stable_set}
Given $F:\mathbb{R}^n \rightrightarrows \mathbb{R}^n$, a set $X \subseteq \mathbb{R}^n$ is d-stable if
$$ \forall \epsilon >0, ~ \exists \delta,\overline{\alpha}>0: ~ \forall \{\alpha_k\}_{k \in \N} \subseteq (0,\overline{\alpha}], ~~~ 
        x_0 \in B_{\delta}(X) ~~~ \Longrightarrow ~~~ 
        \{x_k\}_{k\in \N} \subseteq B_{\epsilon}(X) $$
for any $\{x_k\}_{k\in \N} \subseteq \mathbb{R}^n$ such that  $x_{k+1} \in x_k + \alpha_k F(x_k)$ for all $k\in\N$.
\end{definition}

It is natural to also define asymptotic d-stability.

\begin{definition}
\label{def:asym_stable_set}
Given $F:\mathbb{R}^n \rightrightarrows \mathbb{R}^n$, $(p,q)\in[1,\infty)\times \N\p *$, a set $X \subseteq \mathbb{R}^n$ is asymptotically $(p,q)$-d-stable if it is d-stable and
$$ \exists \delta_0,\overline{\alpha}_0>0: ~ \forall \{\alpha_k\}_{k \in \N}\subseteq (0,\overline{\alpha}_0], ~~~ 
        x_0 \in B_{\delta_0}(X) ~~~ \Longrightarrow ~~~ d(x_k,X) \to 0 $$
for any $\{x_k\}_{k\in \N} \subseteq \mathbb{R}^n$ such that $x_{k+1} \in x_k + \alpha_k F(x_k)$ for all $k\in\N$ where 
$\alpha_k\to 0$ and $\sum_{k=0}\p \infty \alpha_{kq+i}\p  p=\infty$ for all $i\in\llbracket 0,q-1\rrbracket$.
\end{definition}

The requirement on the step size is satisfied by $\alpha_k = \beta/(k+1)\p{1/\gamma}$ with $\beta>0$ and $\gamma\geq p$. 
The next definition is a kind of semiglobal d-stability.
\begin{definition}
\label{def:attractor}
    Given $F:\mathbb{R}^n \rightrightarrows \mathbb{R}^n$, $f:\mathbb{R}^n \rightarrow \R$, and $(p,q)\in [1,\infty)\times\N\p *$, a bounded set $A \subseteq \arg\min f$ is a $p$-attractor if for any $\epsilon,\ell>0$ and any bounded set $X \subseteq \arg\min f$, 
$$
    \exists \delta,\overline{\alpha}>0:~ \forall \{\alpha_k\}_{k\in\N} \subseteq (0,\overline{\alpha}],~ \forall x_0 \in B_{\delta}(X), ~~~ f(x_k)\leq \ell ~~\land~~ \exists k_0 \in \mathbb{N}: \{x_k\}_{k\geq k_0} \subseteq B_{\epsilon}(A)
$$
for any $\{x_k\}_{k\in \mathbb{N}}\subseteq \R\p n$ such that $x_{k+1} \in x_k + \alpha_k F(x_k)$ for all $k \in \mathbb{N}$ where $\sum_{k=0}\p \infty \alpha_{kq+i}\p  p=\infty$ for all $i\in\llbracket 0,q-1\rrbracket$. It is an asymptotic $(p,q)$-attractor if in addition it is asymptotically $(p,q)$-d-stable. 
\end{definition}

In order to fulfill these definitions, we define d-Lyapunov functions.

\begin{definition}
    \label{def:pdL}
    Given $F:\R\p n \rightrightarrows \R\p n$, $g:\R^n\to\eR$ is d-Lyapunov on $U \subseteq \R^n$ if
    $$\exists \overline{\alpha}>0: ~ \forall \alpha\in (0,\overline{\alpha}], ~ \forall x \in U,~ \forall u\in F(x), ~~ g(x+\alpha u) \leq g(x).$$
\end{definition}


It is often enviable to either have a stronger decrease or a multistep decrease.

\begin{definition}
    \label{def:pqdL}
    Given $F:\R\p n \rightrightarrows \R\p n$, $g:\R^n\to\eR$ is $(p,q)$-d-Lyapunov on $U \subseteq \R^n$ with $(p,q)\in[1,\infty)\times\N\p *$ if there exist $\overline{\alpha},\omega>0$ such that 
\begin{equation*}
        \forall \{\alpha_k\}_{k\in\N} \subseteq (0,\overline{\alpha}], ~ \forall k \in \N,~~~ x_k \in U ~~~ \Longrightarrow ~~~ g(x_{k+q}) - g(x_k) \leq - \omega \min\{\alpha_k,\hdots,\alpha_{k+q-1}\}\p p
\end{equation*}
for any $\{x_k\}_{k\in\N}\subseteq \R\p n$ such that $x_{k+1}\in x_k +\alpha_k F(x_k)$ for all $k\in\N$.
\end{definition}

When $q=1$, we drop the index and simply speak of $p$-d-Lyapunov, for which we have the descent property $g(x_{k+1})-g(x_k) \leq - \omega \alpha\p p$, ensuring that it is a d-Lyapunov function. A sufficient condition is as follows.

\begin{proposition}
    \label{prop:sufficient_dL2}
    Let $F:\R\p n \rightrightarrows \R\p n$ be locally bounded with a closed graph and $g:\R\p n \to \overline{\R}$ be $C\p {2,2}$ near $\overline{x}\in \R\p n$. If 
    $$\exists r>0:~ \forall x \in B_r(\overline{x}),~ \forall u\in F(x),~ \langle \nabla g(x) , u \rangle \leq 0 ~~~\land ~~~ \sup_{u\in F(\overline{x})} \langle \nabla\p2 g(\overline{x})  u , u \rangle<0,$$
    then $g$ is 2-d-Lyapunov near $\overline{x}$.
    \end{proposition}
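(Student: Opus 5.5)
Taylor-expand $g$ along each step $x\mapsto x+\alpha u$ to second order. Use the first-order sign condition to discard the linear term. Use the strict negativity of the Hessian quadratic form at $\overline{x}$, transported to nearby points by outer semicontinuity of $F$, to obtain a uniform decrease of order $\alpha\p 2$. The aim is to find $r'\in(0,r)$, $\overline{\alpha}>0$ and $\omega>0$ such that
$$\forall x\in B_{r'}(\overline{x}),~\forall u\in F(x),~\forall \alpha\in(0,\overline{\alpha}],~~~ g(x+\alpha u)\leq g(x)-\omega\alpha\p 2.$$
This is exactly the $2$-d-Lyapunov property with $q=1$ on $U=B_{r'}(\overline{x})$, since $x_{k+1}=x_k+\alpha_k u_k$ with $u_k\in F(x_k)$.

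\textbf{Step 1 (uniform negativity near $\overline{x}$).} Let $-2c:=\sup_{u\in F(\overline{x})}\langle\nabla\p 2 g(\overline{x})u,u\rangle<0$. I claim there is $r_1\in(0,r)$ with $\langle \nabla\p 2 g(y)u,u\rangle\leq -c$ for all $x,y\in B_{r_1}(\overline{x})$ and $u\in F(x)$. Suppose not. Then there are $x_k,y_k\to\overline{x}$ and $u_k\in F(x_k)$ with $\langle \nabla\p 2 g(y_k)u_k,u_k\rangle> -c$. Local boundedness of $F$ makes $u_k$ bounded, so along a subsequence $u_k\to u$. The closed graph gives $u\in F(\overline{x})$, and continuity of $\nabla\p 2 g$ gives $\langle\nabla\p 2 g(\overline{x})u,u\rangle\geq -c>-2c$, a contradiction. (Note $F(\overline{x})\neq\emptyset$ is implicit, since the supremum is finite and negative; if $F(\overline x)$ were empty, $F$ would be empty near $\overline{x}$ by the closed graph and local boundedness, and the claim is vacuous.) Let $M:=\sup|F(B_{r_1}(\overline{x}))|_+<\infty$.

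\textbf{Step 2 (Taylor with integral remainder).} Take $r'=r_1/2$ and $\overline{\alpha}\leq r_1/(2M)$ (any $\overline{\alpha}$ works if $M=0$). For $x\in B_{r'}(\overline{x})$, $u\in F(x)$ and $\alpha\in(0,\overline{\alpha}]$, the segment $[x,x+\alpha u]$ lies in $B_{r_1}(\overline{x})$, where $g$ is $C\p 2$. Then
$$g(x+\alpha u)=g(x)+\alpha\langle\nabla g(x),u\rangle+\alpha\p 2\int_0\p 1(1-s)\langle\nabla\p 2 g(x+s\alpha u)u,u\rangle\,ds.$$
The linear term is $\leq 0$ by hypothesis. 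By Step 1, applied with $y=x+s\alpha u$, the integrand is $\leq -c$, so the remainder is $\leq -c\alpha\p 2/2$. Hence $\omega=c/2$ works. The $C\p{2,2}$ regularity is stronger than needed here; only $C\p 2$ near $\overline{x}$ is used.

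\textbf{Main obstacle.} The only delicate point is Step 1. The Hessian condition is imposed only at $\overline{x}$, and must be propagated uniformly over the velocities $u\in F(x)$ for $x$ near $\overline{x}$, which need not be close to $F(\overline{x})$ pointwise. The compactness-and-contradiction argument, using local boundedness together with the closed graph, handles this. One must also ensure the segment stays in the region of regularity, which the bound $M$ guarantees.
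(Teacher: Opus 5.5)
Your argument is correct and complete. The paper itself gives no proof of this proposition: it is recalled in the background section from \cite{josz2025lyapunov}, so there is no in-paper argument to match line by line.

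Your reduction to the one-step bound $g(x+\alpha u)\leq g(x)-\omega\alpha^2$ for $x\in B_{r'}(\overline{x})$, $u\in F(x)$, $\alpha\in(0,\overline{\alpha}]$ is exactly the $q=1$ case of the definition. Your Step 1 is the same compactness argument (local boundedness, closed graph, continuity of $\nabla^2 g$) that the paper uses in the proof of \cref{thm:implicit_NGD} to obtain $\langle \nabla^2 g(x)u,u\rangle\leq-\omega$ on a ball. The difference is that you decouple the base point $x$ of $u\in F(x)$ from the point $y$ at which the Hessian is evaluated. That decoupling is what lets you bound the integral remainder uniformly along the whole segment $[x,x+\alpha u]$.

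The $C^{2,2}$ hypothesis in the statement suggests the alternative of expanding only at $x$. One would then absorb a cubic remainder of order $L\alpha^3|u|^3$, with $L$ a Lipschitz constant of $\nabla^2 g$, by taking $\alpha$ small. Your version avoids this and, as you note, needs only $C^2$ near $\overline{x}$.

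One wording point: under the convention $\sup\emptyset=-\infty$, the hypothesis does not force $F(\overline{x})\neq\emptyset$, so that remark is inaccurate. Your parenthetical handling of the empty case still covers it: $F$ is then empty near $\overline{x}$ and the claim is vacuous.
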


The study of d-stability rests on some key assumptions, which call for the following notion.
Given a locally Lipschitz function $f:\R^n\to \R$, a set-valued map $D:\R^n\rightrightarrows\R^n$ is a conservative field \cite[Lemma 2]{bolte2020conservative} for $f$, called potential, if $D$ has a closed graph with nonempty compact values and 
$$  \forae t\in (0,1), ~ \forall v \in D(x(t)),~~~ (f\circ x)'(t)=\langle v, x'(t) \rangle $$
for any absolutely continuous curve $x:[0,1]\to\R^n$. 
Let $\cP$ be the set of continuous functions $\kappa:\R\p n\to \R$ that are positive definite, i.e., such that $\kappa(0)=0$ and $\kappa(x)>0$ for all $x\in \R\p n\setminus\{0\}$.

\begin{assumption}
    \label{assume:Ff}
    Let
    \begin{enumerate}[label=\rm{(\rm{\roman*})}]
        \item $F:\R^n\rightrightarrows \R^n$ be upper semicontinuous with nonempty compact values; \label{item:F} 
        \item $f:\R^n\to\R$ be a potential of a conservative field $D:\R^n\rightrightarrows\R^n$ such that 
        $$\forall \widehat{x}\in \R\p n,~ \exists (\rho,\kappa)\in (0,\infty)\times \cP,~ \forall x\in B_{\rho}(\widehat{x}),~\max_{u\in  \co F(x)} \min_{v\in D(x)} \langle u,v\rangle \leq -\min_{w\in D(x)}\kappa(w)$$
        and $(0,\overline{\ell})\cap f(D\p {-1}(0)) = \emptyset$ for some $\overline{\ell}>0$. \label{item:f} 
\end{enumerate}
\end{assumption}

We can now state the main results on d-stability. The first deals with points.

\begin{theorem}
\label{thm:stable_point}
Under \cref{assume:Ff}, let $\overline{x}\in[f=0]$ and suppose
    \begin{enumerate}[label=\rm{(\rm{\roman*})}]
        \item $g:\R^n\to \eR$ is continuous near $\overline x$;
        \item $g$ is d-Lyapunov near $\overline x$;
        \item $\overline x$ is a strict local minimum of $f+g$.
    \end{enumerate}
Then $\overline x$ is d-stable.
\end{theorem}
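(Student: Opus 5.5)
We argue by a sublevel-set trapping argument for the merit function $h=f+g$, combining the approximate descent of $f$ along Euler steps (from \cref{assume:Ff}) with the exact descent of $g$ (from the d-Lyapunov property). Fix $\epsilon>0$, shrunk so that on $\overline{B}_\epsilon(\overline x)$ the function $g$ is continuous and d-Lyapunov with some threshold $\overline\alpha_g$. Shrink it further so that $\overline x$ is the unique minimizer of $f+g$ there. Since $\overline x\in[f=0]$, we have $f(\overline x)+g(\overline x)=g(\overline x)$. By compactness and continuity of $f+g$ on the sphere (annulus) $\epsilon/2\le|x-\overline x|\le\epsilon$, there is a gap
$$\eta=\min\{f(x)+g(x)-g(\overline x):\epsilon/2\le |x-\overline x|\le \epsilon\}>0 .$$
Here $f$ is continuous, being a potential of a conservative field and hence locally Lipschitz.

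\textbf{Step 1: one-step increments are small.} By upper semicontinuity of $F$ with compact values, $F$ is bounded by some $M$ on $\overline B_\epsilon(\overline x)$. Hence $|x_{k+1}-x_k|\le M\alpha_k\le M\overline\alpha$. Choosing $\overline\alpha\le \epsilon/(4M)$ prevents any jump from the inner ball $B_{\epsilon/2}$ past the annulus to outside $B_\epsilon$. So if the iterates ever leave $B_\epsilon(\overline x)$, some iterate must first land in the annulus.

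\textbf{Step 2: approximate descent of $f$.} This is the main obstacle. We need $f(x_{k+1})\le f(x_k)+\eta/4$ along every step taken from the ball, and more strongly that the cumulative increase of $f$ stays below $\eta/2$ over the whole sequence. Because $F$ is set-valued and the decrease in \cref{assume:Ff}\ref{item:f} only holds in the $\max\min$ sense with $\co F$, a single step may increase $f$. We use the structure $(0,\overline\ell)\cap f(D^{-1}(0))=\emptyset$ together with the fact that $f\geq0$ near $\overline x$ is not given, so we work with $f$ relative to level $0$, as in \cite{josz2025lyapunov}. The plan is as follows.
\begin{itemize}
\item On the region where $\min_{w\in D(x)}\kappa(w)$ is bounded below, the conservative-field chain rule along the segment $[x_k,x_{k+1}]$, combined with upper semicontinuity of $F$ and $D$ (closed graph), gives genuine decrease $f(x_{k+1})\le f(x_k)-c\alpha_k$ for $\overline\alpha$ small.
\item Near critical points, $|f|$ is small by the level condition and continuity, because the only critical value near $0$ in $[0,\overline\ell)$ is $0$. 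Thus $f(x_{k+1})\le \max\{f(x_k),\theta\}$ with $\theta\to0$ as $\overline\alpha\to0$.
\end{itemize}
We then choose $\overline\alpha$ so that $\theta<\eta/4$ and the local Lipschitz bound $LM\overline\alpha<\eta/4$. I expect making this uniform over the whole ball, with a continuity/compactness argument over the segments, to be the delicate step.

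\textbf{Step 3: conclude.} Pick $\delta<\epsilon/2$ so that $f(x)\le\eta/4$ and $g(x)\le g(\overline x)+\eta/4$ on $B_\delta(\overline x)$. Also take $\overline\alpha\le\overline\alpha_g$. Suppose $x_0\in B_\delta(\overline x)$ and let $k$ be the first index with $x_k$ outside $B_{\epsilon/2}$. By Step 1, $x_k$ lies in the annulus, and all earlier iterates lie in $B_\epsilon$. Since $g$ is d-Lyapunov there, $g(x_k)\le g(x_0)\le g(\overline x)+\eta/4$. By Step 2, $f(x_k)\le \max\{f(x_0),\theta\}+\eta/4<\eta/2+\eta/4$. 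Summing,
$$f(x_k)+g(x_k)<g(\overline x)+\eta,$$
which contradicts the definition of $\eta$. Hence every iterate stays in $B_{\epsilon/2}(\overline x)\subseteq B_\epsilon(\overline x)$, which is d-stability of $\overline x$.
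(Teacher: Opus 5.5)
The paper recalls this theorem from \cite{josz2025lyapunov} without proof, so I assess your argument on its own. Steps 1 and 3 are a correct trapping skeleton. Step 2, which you flag as the crux, is not established, and the mechanism you sketch for it is false.

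Your first bullet asserts a per-step decrease $f(x_{k+1})\le f(x_k)-c\alpha_k$ wherever $\min_{w\in D(x)}\kappa(w)$ is bounded below. Take $f(x_1,x_2)=4|x_1|+3x_2$, $F=-\widehat\nabla f$, $D=\overline\partial f$. Then \cref{assume:Ff} holds by \cref{prop:normal_assumption}, $f$ has no $D$-critical points, and $d(0,D(x))\ge 3$ everywhere. Yet for every $\alpha>0$ and every $x$ with $0<x_1\le\alpha/10$, the step $x^+=x-\alpha(4,3)/5$ gives
\[
f(x^+)-f(x)=\tfrac{7}{5}\alpha-8x_1\ge\tfrac{3}{5}\alpha>0 .
\]
So no choice of $\overline\alpha$ rescues the bullet; here $f$ only decreases over pairs of steps. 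The reason is structural. The chain rule along $[x_k,x_{k+1}]$ involves $D(y)$ at points $y$ of the segment, whereas \cref{assume:Ff} only bounds $\min_{v\in D(y)}\langle w,v\rangle$ for $w\in\co F(y)$ at the \emph{same} point. The direction $u\in F(x_k)$ need not lie in $\co F(y)$, and $D$ may jump (here from $(4,3)$ to $(-4,3)$ across $x_1=0$). Without per-step monotonicity, your induction $f(x_{k+1})\le\max\{f(x_k),\theta\}$ collapses. Nothing then controls the accumulation of $O(\alpha)$ increases over the unboundedly many steps the iterates may spend in $B_{\epsilon/2}(\overline x)$. Continuity of $f$ does not help either, since $\eta$ may be far smaller than $\sup_{\overline B_\epsilon(\overline x)}f$.

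What is missing is a uniform-in-time approximate descent lemma. For instance: for a compact $K$ and small $\theta>0$ there is $\overline\alpha>0$ such that if $\alpha_0,\dots,\alpha_{k-1}\in(0,\overline\alpha]$, $x_0,\dots,x_k\in K$ and $f(x_0)\le\theta$, then $f(x_j)\le 2\theta$ for all $j\le k$. With $K=\overline B_\epsilon(\overline x)$ and $\theta=\eta/4$, your Step 3 then goes through. This lemma has to be proved through continuous time rather than step by step, in three moves.
\begin{enumerate}
\item On windows of fixed length $T$ in the time scale $\sum_i\alpha_i$, the piecewise-linear interpolants of the iterates are uniformly $\epsilon'$-close to trajectories of $\dot x\in\co F(x)$ once $\overline\alpha$ is small. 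This follows from upper semicontinuity of $F$, Arzel\`a--Ascoli, and the convergence theorem for differential inclusions.
\item Along such a trajectory $z$, the conservative chain rule holds for \emph{every} $v\in D(z(t))$, and $z'(t)\in\co F(z(t))$. This is exactly where the max--min condition gives $(f\circ z)'(t)\le-\min_{w\in D(z(t))}\kappa(w)\le 0$.
\item The gap $(0,\overline\ell)\cap f(D^{-1}(0))=\emptyset$ makes this rate at least some $c>0$ on the compact band $K'\cap[\theta/2\le f\le\overline\ell/2]$, with $K'$ a slight enlargement of $K$. So every window spent above level $\theta/2$ costs a decrease $cT$ that dominates the tracking error $L\epsilon'$. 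An induction over windows then caps $f$ by $2\theta$.
\end{enumerate}
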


The second extends to sets and asymptotic d-stability.

\begin{theorem}
\label{thm:stable_set}
Under \cref{assume:Ff} with $\min f = 0$, suppose
\begin{enumerate}[label=\rm{(\rm{\roman*})}]
    \item $g:\R^n\to\eR$ is continuous near $\arg\min f$ with $\min g = 0$;
    \item $g$ is d-Lyapunov (resp. $p$-d-Lyapunov) near every point in $\arg\min f\cap [g>0]$ near $\arg\min f+g$;
    \item $\arg\min f+g$ is bounded; 
\end{enumerate}
Then $\arg\min f+g$ is d-stable (resp. asymptotically $p$-d-stable). 
\end{theorem}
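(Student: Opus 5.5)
The plan is to run a Lyapunov argument on the pair $(f,g)$, with $f$ handled through \cref{assume:Ff} and $g$ through the d-Lyapunov hypothesis, and to tie the two together by compactness of $A:=\arg\min f+g$.

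\textbf{Step 1: compactness and a sublevel trap.} Since $\min f=0$ and $\min g=0$, we have $A=\arg\min f\cap[g=0]$, and $f+g$ is continuous near $\arg\min f$. Fix $\epsilon>0$, which we may take small enough that $g$ is continuous on a neighbourhood of $\overline{B_\epsilon(A)}\cap\arg\min f$. Because $A$ is bounded, $\overline{B_\epsilon(A)}$ is compact. By continuity, $f+g$ is positive on the compact set $\overline{B_\epsilon(A)}\setminus B_{\epsilon/2}(A)$, hence bounded below there by some $\eta>0$. Consequently any point of $\overline{B_\epsilon(A)}$ with $f\le \ell$, $g<\eta-\ell$ lies in $B_{\epsilon/2}(A)$. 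The aim is therefore to keep $f(x_k)\le\ell$ and $g(x_k)<\eta-\ell$ for a small $\ell$.

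\textbf{Step 2: control of $f$.} Under \cref{assume:Ff}, a step $x+\alpha u$ with $u\in F(x)$ changes $f$ by at most $o(\alpha)$ uniformly on compacts, and by item (ii) it strictly decreases $f$ away from $D^{-1}(0)$. Since $(0,\overline\ell)$ contains no critical values, this gives the level-set stability of $[f=0]$ from \cite{josz2025lyapunov}: for every $\ell\in(0,\overline\ell)$ there are $\delta,\overline\alpha$ such that $f(x_0)<\delta$ implies $f(x_k)\le\ell$ for all $k$, as long as the iterates stay in a fixed compact set. I expect this to be the main technical obstacle, because $f$ is not d-Lyapunov itself and must be handled via the conservative field $D$ and the gap in critical values. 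I would first try to import it directly from the d-stability results of \cite{josz2025lyapunov} rather than redo it. Taking $\ell$ small, $[f\le \ell]\cap \overline{B_\epsilon(A)}$ lies within any prescribed distance of $\arg\min f$.

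\textbf{Step 3: control of $g$ and d-stability.} By compactness, cover $\arg\min f\cap\overline{B_\epsilon(A)}\cap[g\ge\eta/4]$ by finitely many balls on which $g$ is d-Lyapunov, using hypothesis (ii), and take the minimum of the corresponding $\overline\alpha$. On the union of these balls, $g(x_{k+1})\le g(x_k)$. Wherever $g(x_k)<\eta/4$, uniform continuity of $g$ on the compact region, together with local boundedness of $F$, gives $g(x_{k+1})<\eta/2$ provided $\overline\alpha$ is small. By induction, starting from $x_0\in B_\delta(A)$ with $g(x_0)<\eta/4$, we get $g(x_k)<\eta/2$ and $f(x_k)\le\ell<\eta/2$. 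Step 1 then keeps every iterate in $B_{\epsilon/2}(A)$, and this induction closes the loop that keeps the iterates in the compact region used in Step 2. This proves d-stability.

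\textbf{Step 4: asymptotic case.} Assume now that $g$ is $p$-d-Lyapunov, $\alpha_k\to0$, and $\sum\alpha_k^p=\infty$. Since $\alpha_k\le1$ eventually, also $\sum\alpha_k=\infty$, so the $f$-part of \cref{assume:Ff} (decrease of order $\alpha$ off the critical set, no critical values in $(0,\overline\ell)$) yields $f(x_k)\to0$. For any $\eta'>0$, whenever $g(x_k)\ge\eta'$ the iterate lies in one of the finitely many balls from Step 3, so $g$ drops by at least $\omega\alpha_k^p$. Divergence of $\sum\alpha_k^p$ forces $g(x_k)<\eta'$ infinitely often. Since $\alpha_k\to0$, the overshoot above $\eta'$ after such a time tends to zero, so $\limsup g(x_k)\le\eta'$. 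Letting $\eta'\to0$ gives $g(x_k)\to0$. Together with $f(x_k)\to 0$, compactness and continuity of $f+g$ give $d(x_k,A)\to0$.
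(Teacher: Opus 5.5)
The paper does not prove this statement. It quotes it from \cite{josz2025lyapunov}. Your architecture (control $f$ through \cref{assume:Ff}, control $g$ through the d-Lyapunov hypothesis, glue the two with compactness of $A$) is the natural one built into the statement. The $g$-side of your argument (Step 3 and the end of Step 4) is sound up to routine repairs:
\begin{itemize}
\item In Step 1, positivity of $f+g$ on $\overline{B_\epsilon(A)}\setminus B_{\epsilon/2}(A)$ yields a bound $\eta>0$ only under lower semicontinuity, which $g$ need not have away from $\arg\min f$. Use $g\geq 0$ and treat separately points close to $\arg\min f$ (where $g$ is bounded below by continuity) and points away from it (where $f$ is).
\item In Step 3 you must check that every point of $\overline{B_\epsilon(A)}$ with $f\leq\ell$ and $g\geq\eta/4$ lies in the union of the d-Lyapunov balls. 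This holds by compactness for $\ell$ small.
\item In Step 4 the cover must be redone for each $\eta'<\eta/4$.
\item Writing $A=\arg\min f\cap[g=0]$ requires $\arg\min f\cap[g=0]\neq\emptyset$. This does not follow from $\min f=\min g=0$, but it is the intended reading.
\end{itemize}

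The genuine gap is the control of $f$ (Step 2, and $f(x_k)\to 0$ in Step 4), which is where \cref{assume:Ff} does all its work. The mechanism you state is false: one step $x+\alpha u$ with $u\in F(x)$ neither changes $f$ by $o(\alpha)$ nor decreases it away from $D^{-1}(0)$. Take $f(x,y)=|x|+y/2$ and $F=-\widehat\nabla f$, which satisfy \cref{assume:Ff} by \cref{prop:normal_assumption}; $f$ has no critical points at all. A step from $(x,y)$ with $0<x<2\alpha/\sqrt{5}$ crosses the kink and raises $f$ by $\tfrac{3}{2\sqrt{5}}\alpha-2x$, which is of order $\alpha$ when $x\ll\alpha$.

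Descent is only a continuous-time fact. Along trajectories of $\dot x\in\co F(x)$, the chain rule for the conservative field and \cref{assume:Ff}~\ref{item:f} give $(f\circ x)'\leq-\min_{w\in D(x)}\kappa(w)$. This is at most some $-c<0$ on the compact critical-free set $[\delta'\leq f\leq\ell']\cap K$, with $0<\delta'<\ell'<\overline\ell$.

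What is missing is a tracking estimate that transfers this to the Euler scheme. While the iterates stay in a compact $K$, for fixed $T,\epsilon'>0$ and all step sizes below some $\overline\alpha$, the interpolated iterates over any window of total step length about $T$ stay within $\epsilon'$ of a trajectory of $\co F$. This is where upper semicontinuity with compact values is used. Let $L$ be a Lipschitz constant of $f$ and $M$ a bound on $F$ over $K$. Between the last time $f<\delta'$ and a putative first time $f>\ell$:
\begin{itemize}
\item $f$ drops by at least $cT-O(\epsilon')$ on each full window;
\item $f$ rises by at most $O(\epsilon')$ on the last partial window;
\item hence $f$ stays below $\delta'+LM\overline\alpha+O(\epsilon')<\ell$, a contradiction.
\end{itemize}
When $\alpha_k\to 0$ and $\sum_k\alpha_k=\infty$, the same estimate rules out $f(x_k)\geq\delta'$ for all large $k$ and shrinks later excursions. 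This gives $f(x_k)\to0$, since $0$ is the only critical value in $[0,\ell]$.

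Importing this from \cite{josz2025lyapunov} is legitimate, but you should state exactly which windowed result you import, since the one-step estimate you actually wrote down fails.
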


The third deals with attractors. It captures the idea of implicit regularization, where $g$ acts as an implicit regularizer.

\begin{theorem}
\label{thm:attractor}
Under \cref{assume:Ff} with $\min f = 0$, suppose
\begin{enumerate}[label=\rm{(\rm{\roman*})}]
    \item $g:\R^n\to\eR$ is continuous on its full measure domain with $\min g = 0$;
    \item $g$ is $p$-d-Lyapunov near every point in $\arg\min f\cap [0<g<\infty]$;
    \item $f+g$ is coercive.
\end{enumerate}
Then $\arg\min f+g$ is an almost sure asymptotic $p$-attractor, which is an asymptotic $p$-attractor if $\dom g=\R\p n$.
\end{theorem}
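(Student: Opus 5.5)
We argue in three stages: first that the iterates stay in a compact sublevel region, then that they keep passing near $\arg\min f$, and finally that $g$ is driven down wherever it is positive, using the $p$-d-Lyapunov decrease together with $\sum\alpha_k\p p=\infty$. Fix $\epsilon,\ell>0$ and a bounded set $X\subseteq\arg\min f$.

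\textbf{Containment.} Since $f+g$ is coercive and continuous on $\dom g$, every sublevel set $[f+g\leq c]$ is compact. We choose $c>\sup_{X}g$ so that $B_\delta(X)\cap\dom g\subseteq[f+g<c]$ for small $\delta$. The aim is to show the iterates never leave the compact set $K=[f+g\leq c']$ for some $c'>c$, provided $\overline{\alpha}$ is small.

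\textbf{Decrease of $f$ away from $\arg\min f$.} \cref{assume:Ff}\ref{item:f}, via the conservative field $D$ and the uniform angle condition with $\kappa$, gives
$$f(x_{k+1})\leq f(x_k)-c_1\alpha_k$$
on compact subsets of $K$ that avoid $D\p{-1}(0)$. On $K$ we also use $(0,\overline{\ell})\cap f(D\p{-1}(0))=\emptyset$, and for $f(x)\in(\ell',\overline{\ell})$ the value of $f$ decreases by an amount proportional to $\alpha_k$. This is the same mechanism as in the proof of \cref{thm:stable_set}, which we take as the template. Near $\arg\min f\cap[g>0]$, $g$ is $p$-d-Lyapunov, and $f$ stays small because $F$ is locally bounded, so $|x_{k+1}-x_k|=O(\alpha_k)$ and $f$ can only increase by $O(\alpha_k)$. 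The region where $g$ may increase is a neighborhood of $\arg\min f\cap[g=0]$, but there $f+g$ is close to $0$, so $f+g$ cannot climb above $c'$ from there with small steps. Combining these by a compactness and covering argument over $K\cap[f\leq\ell]$ shows that $f(x_k)\leq\ell$ once $x_k$ has entered a small neighborhood of $\arg\min f$. Here we use that $x_0\in B_\delta(X)$ with $f(x_0)$ small, by continuity of $f$ near $\arg\min f$.

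\textbf{Attraction and main obstacle.} Suppose the iterates eventually stay near $\arg\min f$ but not within $B_\epsilon(\arg\min f+g)$. Then they lie near points of $\arg\min f\cap[g\geq\eta]$ for some $\eta>0$, since $\arg\min(f+g)=\arg\min f\cap[g=0]$ after normalizing $\min g=0$ and using a coercivity compactness argument. Cover this compact set by finitely many balls on which $g$ is $p$-d-Lyapunov with a uniform $\omega$. Each step then decreases $g$ by at least $\omega\alpha_k\p p$, and $\sum\alpha_k\p p=\infty$ contradicts $g\geq0$. Hence the iterates reach $B_\epsilon(\arg\min f+g)$.

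To stay there eventually, we invoke \cref{thm:stable_set}: $\arg\min f+g$ is asymptotically $p$-d-stable, so after entering a $\delta'$-neighborhood the iterates remain in $B_\epsilon$. This also supplies the asymptotic part of the claim. The main obstacle is the containment and uniformity step, namely controlling excursions where neither $f$ nor $g$ decreases, uniformly over all step-size sequences in $(0,\overline{\alpha}]$.

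\textbf{Almost sure qualifier.} When $\dom g\neq\R\p n$, the set $\R\p n\setminus\dom g$ is null. Trajectories are only guaranteed to avoid it for almost every $x_0$, since for fixed step sizes the preimage of a null set under the piecewise iteration maps is null; this requires semi-algebraicity or Lipschitz structure, which we would verify. This yields the almost sure version of the conclusion. When $\dom g=\R\p n$, the result holds for every initial point.
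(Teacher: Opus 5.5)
The paper does not prove this theorem. It is quoted from \cite{josz2025lyapunov}, so I judge your proposal on its own terms. There is a genuine gap, located at the step you yourself call the main obstacle, and the tool you propose for it is false.

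\textbf{The one-step descent of $f$ fails.} The bound $f(x_{k+1})\le f(x_k)-c_1\alpha_k$ away from $D^{-1}(0)$ does not follow from \cref{assume:Ff}, whose angle condition only controls a minimum over $D(x)$ at the base point. Take $f(x,y)=|x|+\varepsilon|y|$ with $\varepsilon\in(0,1)$. It is locally Lipschitz semi-algebraic with $\min f=0$ and no critical value in $(0,\infty)$, so \cref{assume:Ff} holds with $F=-\widehat\nabla f$ and $D=\overline{\partial}f$ by \cref{prop:normal_assumption}. At $(0,1)$ we have $0\notin\overline{\partial}f(0,1)=[-1,1]\times\{\varepsilon\}$. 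Yet $u=(1,-\varepsilon)/\sqrt{1+\varepsilon^2}\in-\widehat\nabla f(0,1)$ gives $f((0,1)+\alpha u)-f(0,1)=\alpha(1-\varepsilon^2)/\sqrt{1+\varepsilon^2}>0$.

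So $f$ can rise by order $\alpha_k$ in a single step inside the critical-value-free band. Bounded per-step increments do not keep $f$ small, since $\sum\alpha_k=\infty$. Nor is $f+g$ an invariant: its $g$-part drops only by $\omega\alpha_k^p$ and may rise near $\arg\min f+g$. Descent of $f$ is only available over windows of many steps, via a finite-horizon tracking argument. On a compact set, Euler sequences with steps in $(0,\overline\alpha]$ stay uniformly close over a fixed time horizon to trajectories of $\dot x\in\co F(x)$ as $\overline\alpha\to 0$. Along those trajectories $f$ is nonincreasing and decreases at a rate bounded below on $K\cap[\ell/2\le f\le \ell]$ for $\ell<\overline{\ell}$.

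\textbf{What the containment step needs.} It should be an induction on a compact set $K$ fixed before $\overline\alpha$ is chosen. The bound $f(x_k)\le\ell$ is maintained by the windowed descent, not by one-step descent. The bound $g(x_k)\le M$ is maintained by the per-step $p$-d-Lyapunov decrease on a finite cover of $\arg\min f\cap K\cap[g\ge\eta]$, together with continuity of $g$ near $\arg\min f\cap[g<\eta]$. Coercivity of $f+g$ then returns $x_{k+1}$ to $K$. Once this is in place, your attraction step via $\sum\alpha_k^p=\infty$ and \cref{thm:stable_set} goes through.

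\textbf{Three further points need repair.}

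First, $\arg\min f+g=\arg\min f\cap[g=0]$ is not a static consequence of $\min f=\min g=0$ and coercivity. Take $f(x)=x^2$ and $g(x)=(x-1)^2$: then $\arg\min f+g=\{1/2\}$ while $\arg\min f\cap[g=0]=\emptyset$. The identity must be extracted from hypothesis (ii) through the dynamics. It is needed because a $p$-attractor must lie in $\arg\min f$.

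Second, the choice $c>\sup_X g$ is unavailable when $X$ meets $[g=\infty]$. The definition permits this when $\dom g\neq\mathbb{R}^n$, e.g.\ the points $(\pm1/\sqrt{a},0)$ in \cref{eg:ellipse}. In addition, \cref{thm:stable_set} asks for continuity of $g$ near all of $\arg\min f$, so the almost sure case needs its own argument rather than a direct citation.

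Third, your null-preimage justification of the almost sure qualifier is not available. The inclusion $x_{k+1}\in x_k+\alpha_kF(x_k)$ is set-valued with $F$ merely upper semicontinuous. Nothing semi-algebraic is assumed in \cref{assume:Ff}. The conclusion must also hold for all step-size sequences simultaneously. Finiteness of $g$ along iterates started in $\dom g$ is more naturally obtained from the d-Lyapunov inequality $g(x_{k+1})\le g(x_k)$ itself.
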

In order to apply the theory of d-stability to normalized gradient descent, various tasks lay ahead. What set-valued mapping $F:\R\p n\rightrightarrows \R\p n$ should we use? Does it satisfy \cref{assume:Ff}? What properties do we require from the objective function $f$? Is smoothness required? Where to look for candidate d-stable points? Should they be local minima of $f$? Flat local minima of $f$? What about d-Lyapunov functions? How to check whether a given function is d-Lyapunov? How to build one? Can we use symmetries and conservation laws? 

\section{Theory}
\label{sec:Theory}

We first recall some basic notions from variational analysis. Given $f:\mathbb{R}^n\rightarrow \overline{\mathbb{R}}$ and a point $\overline{x}\in\mathbb{R}^n$ where $f(\overline{x})$ is finite, the regular subdifferential, subdifferential, horizon subdifferential \cite[Definition 8.3]{rockafellar2009variational}, and Clarke subdifferential of $f$ at $\overline{x}$ \cite[Definition 4.1]{drusvyatskiy2015curves} are respectively given by
\begin{gather*}
    \widehat{\partial} f (\overline{x}) = \{ v \in \mathbb{R}^n : f(x) \geq   f(\overline{x}) + \langle v , x - \overline{x} \rangle + o(|x-\overline{x}|) ~\text{near}~ \overline{x} \}, \\
    \partial f(\overline{x}) = \{ v \in \mathbb{R}^n : \exists (x_k,v_k)\in \gph\hspace*{.5mm}\widehat{\partial} f: (x_k, f(x_k), v_k)\rightarrow(\overline{x}, f(\overline{x}), v) \}, \\[1mm]
    \partial^\infty f(\overline{x}) = \{ v \in \mathbb{R}^n : \exists (x_k,v_k)\in \gph\hspace*{.5mm}\widehat{\partial} f: \exists \tau_k \searrow 0: (x_k, f(x_k), \tau_kv_k)\rightarrow(\overline{x}, f(\overline{x}), v) \}, \\[2mm]
    \overline{\partial} f(\overline{x}) = \overline{\mathrm{co}} [\partial f(\overline{x}) + \partial^\infty f(\overline{x})].
\end{gather*}

Given $f:\R\p n\to \R$, the Bouligand subdifferential at $\overline{x}\in \R\p n$ is defined by
$$\overline{\nabla} f(\overline{x}) = \left\{ v \in \mathbb{R}^n : \exists x_k \xrightarrow[\Omega]{} \overline{x} ~\text{with}~ \nabla f(x_k) \rightarrow v\right\}$$
where $\Omega$ are the differentiable points of $f$. The letter $\Omega$ under the arrow means $x_k \in \Omega$ and $x_k \rightarrow \overline{x}$. Given $\Phi:\R\p n\rightrightarrows\R\p n$, a point $x\in \R^n$ is $\Phi$-critical if $0\in \Phi(x)$ and a scalar $\ell$ is a $\Phi$-critical value of $f$ if there exists $x\in \R^n$ such that $\ell=f(x)$ and $0\in \Phi(x)$.

The Lipschitz modulus of a function $f:\R^n\to\R$ at a point $\overline{x}\in \R\p n$ is defined by \cite[p. 354]{rockafellar2009variational}
$$
    \lip f(\overline{x}) = \limsup_{\scriptsize \begin{array}{c}x,y \rightarrow \overline{x}\\ x\neq y\end{array}} \frac{|f(x)-f(y)|}{|x-y|}.
$$
The function $f$ is Lipschitz continuous near $\overline{x}$ iff $\lip f(\overline{x})<\infty$, in which case $\partial f(\overline{x})$ is nonempty and compact, and $\lip f(\overline{x}) = \max \{|v| : v \in \partial f(\overline{x})\}$ by \cite[Theorem 9.13]{rockafellar2009variational}.

\subsection{Normalized subdifferential}
\label{subsec:Normalized subdifferential}


The Bouligand subdifferential is well-suited for analyzing d-stability with sharp nonsmooth functions, as shown in \cite{josz2025lyapunov}, but with smooth functions another subdifferential is called for. Indeed, the Bouligand subdifferential is simply the gradient in the smooth case, and gradient dynamics with sufficiently nonsummable small step sizes can converge to any local minimum \cite{josz2025reachability}. The following definition is new to the best of our knowledge.

\begin{definition}
Given $f:\R\p n \to \R$, let $\widetilde\nabla f:\R\p n \rightrightarrows \R\p n$ be defined by
$$\widetilde \nabla f(\overline x) =\left\{ \begin{array}{cl}
    \{\nabla f(\overline x)/|\nabla f(\overline x)|\} & \text{if}~ \overline x \in \Omega ~\text{and}~ \nabla f(\overline x) \neq 0, \\
    \{0\} & \text{if}~ f ~\text{is constant near}~ \overline x, \\
    \emptyset & \text{else},
\end{array}\right.$$
where $\Omega$ are the differentiable points of $f$. The normalized subdifferential is $\widehat\nabla f = \cl \widetilde \nabla f$. 
\end{definition}

We can now define normalized gradient descent.

\begin{definition}
    \label{def:NGD}
    Given $f:\R\p n \to \R$, normalized gradient descent is the Euler discretization of $-\widehat\nabla f$, whose trajectories $\{x_k\}_{k\in \N}\subseteq \R\p n$ obey
$$\forall k \in \N, ~~~ x_{k+1} \in x_k - \alpha_k \widehat{\nabla} f(x_k),$$
for some sequence $\{\alpha_k\}_{k\in \N} \subseteq (0,\infty)$.
\end{definition}

At noncritical points of locally Lipschitz functions, the definition agrees with common intuition.

\begin{lemma}
    \label{lemma:normal_Bouligand}
    Let $f:\R\p n \to \R$ be Lipschitz continuous near $\overline{x}\in\R\p n$. If $0 \notin \overline{\nabla} f(\overline{x})$, then
    $\widehat\nabla f(\overline{x}) = \{ v/|v| : v\in \overline{\nabla} f(\overline{x})\}$.
\end{lemma}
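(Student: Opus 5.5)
We prove the two inclusions separately. The key observation is that $0\notin\overline{\nabla} f(\overline{x})$, together with local Lipschitz continuity, forces $f$ to be non-constant near $\overline{x}$ and the gradients of $f$ near $\overline{x}$ to be bounded away from zero.

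\emph{Preliminary step.} We claim there exist $r,c>0$ such that $|\nabla f(x)|\ge c$ for all $x\in\Omega\cap B_r(\overline{x})$. Suppose not. Then there is a sequence $x_k\in\Omega$ with $x_k\to\overline{x}$ and $\nabla f(x_k)\to 0$. This sequence is legitimate because Rademacher's theorem makes $\Omega$ of full measure near $\overline{x}$, and the gradients are bounded by the Lipschitz constant $L$. Its limit would put $0\in\overline{\nabla} f(\overline{x})$, a contradiction. It follows that $f$ is not constant on any open subset of $B_r(\overline{x})$, since otherwise $\nabla f=0$ at points of $\Omega$ there. Hence on $B_r(\overline{x})$ we have $\widetilde\nabla f(x)=\{\nabla f(x)/|\nabla f(x)|\}$ if $x\in\Omega$, and $\widetilde\nabla f(x)=\emptyset$ otherwise. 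The case $x\in\Omega$ with $\nabla f(x)=0$ has just been excluded.

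\emph{Inclusion $\supseteq$.} Take $v\in\overline{\nabla} f(\overline{x})$, so $x_k\in\Omega$, $x_k\to\overline{x}$ and $\nabla f(x_k)\to v$, with $|v|\ge c>0$. The normalized gradients $\nabla f(x_k)/|\nabla f(x_k)|$ lie in $\widetilde\nabla f(x_k)$ and converge to $v/|v|$, by continuity of $w\mapsto w/|w|$ away from $0$. Therefore $(\overline{x},v/|v|)\in\cl\gph\widetilde\nabla f$, that is, $v/|v|\in\widehat\nabla f(\overline{x})$.

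\emph{Inclusion $\subseteq$.} This is the only point needing care. We interpret $\widehat\nabla f=\cl\widetilde\nabla f$ as the set-valued map whose graph is the closure of $\gph\widetilde\nabla f$. Take $u\in\widehat\nabla f(\overline{x})$, so there are $(x_k,u_k)\in\gph\widetilde\nabla f$ converging to $(\overline{x},u)$. For large $k$ we have $x_k\in B_r(\overline{x})$, and $\widetilde\nabla f(x_k)\neq\emptyset$. By the preliminary step this means $x_k\in\Omega$ and $u_k=\nabla f(x_k)/|\nabla f(x_k)|$. The gradients $\nabla f(x_k)$ are bounded by $L$, so after passing to a subsequence $\nabla f(x_k)\to v$, with $v\in\overline{\nabla} f(\overline{x})$ and $|v|\ge c$. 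Normalization is continuous on $\{|w|\ge c\}$, so $u_k\to v/|v|$, and hence $u=v/|v|$.

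The main obstacle is the preliminary step. It is needed to rule out both the constant-function branch $\{0\}$ and vanishing gradients near $\overline{x}$, either of which could otherwise contribute spurious limits (such as $0$) to the closure.
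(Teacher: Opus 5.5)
Your proof is correct and follows essentially the same route as the paper. Both inclusions use the graph-closure definition of $\widehat\nabla f$, the Lipschitz bound to extract a convergent subsequence of gradients, and $0\notin\overline{\nabla} f(\overline{x})$ to keep the limit nonzero. The only real difference is that you first prove a uniform bound $|\nabla f|\geq c$ on $\Omega\cap B_r(\overline{x})$, which rules out the constant branch and vanishing gradients at once, whereas the paper excludes $u=0$ by a direct contradiction. Your appeal to Rademacher in the preliminary step is unnecessary, since the negation of the claim already supplies the sequence.
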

\begin{proof}
    Let $u \in \widehat\nabla f(\overline{x})$. If $u = 0$, then there exists $x_k \to \overline{x}$ such that $\widetilde \nabla f(x_k) = \{0\}$, i.e., $f$ is constant near $x_k$, and so $\nabla f(x_k) = 0$. But then $0 \in \overline{\nabla}f(\overline x)$, a contradiction. Thus there exists $\Omega \ni x_k\to \overline x$ such that such that $\nabla f(x_k) \neq 0$ and $\nabla f(x_k)/|\nabla f(x_k)|\to u$. Since $f$ is Lipschitz continuous near $\overline{x}$, $\nabla f(x_k)$ is bounded. By taking a subsequence if necessary, $\nabla f(x_k) \to v\in \overline{\nabla} f(\overline{x})$ with $v\neq 0$. Thus $ u = v/|v|$. Conversely, let $v \in \overline{\partial}f(\overline{x})$. There exists $\Omega \ni x_k \to \overline{x}$ such that $\nabla f(x_k)\to v\neq 0$. Thus $\nabla f(x_k)\neq 0$ eventually and $\widetilde\nabla f(x_k) =\{ \nabla f(x_k)/|\nabla f(x_k)|\}$. Hence $v/|v| \in \widehat\nabla f(\overline{x})$.
\end{proof}

\cref{lemma:normal_Bouligand} implies that
$$ \widehat{\nabla} f(\overline{x}) = \left\{ v \in \mathbb{R}^n : \exists x_k \xrightarrow[\Omega]{} \overline{x}: \nabla f(x_k) \neq 0 ~\land~ \nabla f(x_k)/|\nabla f(x_k)| \rightarrow v\right\}$$
when $f$ is locally Lipschitz and $0\notin\overline{\nabla}f(\overline{x})$. 
The reason why we do not define $\widehat\nabla f$ in this way is because this expression can be empty when $0\in\overline{\nabla}f(\overline{x})$. It also not clear what value to give it instead. We proceed with some basic properties of the normalized subdifferential.

\begin{lemma}
\label{lemma:constant}
    Let $f:\R\p n \to \R$ be Lipschitz near $\overline{x}\in \R\p n$ and $\nabla f(x)=0$ for all $x\in \Omega$ near $\overline{x}$. Then $f$ is constant near $\overline{x}$.
\end{lemma}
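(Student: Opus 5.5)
The idea is to exploit Rademacher's theorem together with the fact that a Lipschitz function is determined by its derivative along almost every line segment. First choose $r>0$ so that $f$ is Lipschitz on $B_r(\overline{x})$ and $\nabla f(x)=0$ for all $x\in\Omega\cap B_r(\overline{x})$. By Rademacher's theorem, $B_r(\overline{x})\setminus\Omega$ has Lebesgue measure zero. It then suffices to show that $f(x)=f(y)$ for all $x,y\in B_r(\overline{x})$. By continuity of $f$, we may further restrict to $x,y$ in a dense subset of pairs.

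Fix $x,y\in B_r(\overline{x})$. For a perturbation $z\in\R\p n$ small enough, the segment $[x+z,y+z]$ lies in the convex ball $B_r(\overline{x})$. Apply Fubini's theorem to the map $(z,t)\mapsto \chi_{B_r(\overline{x})\setminus\Omega}(x+z+t(y-x))$ over $B_\eta(0)\times[0,1]$. Since the nondifferentiability set is null, for almost every $z\in B_\eta(0)$ the set $\{t\in[0,1] : x+z+t(y-x)\notin\Omega\}$ has measure zero.

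For such a $z$, the curve $\gamma(t)=x+z+t(y-x)$ makes $f\circ\gamma$ Lipschitz, hence absolutely continuous. For almost every $t$, we have $\gamma(t)\in\Omega$, and there the chain rule gives $(f\circ\gamma)'(t)=\langle\nabla f(\gamma(t)),y-x\rangle=0$. Integrating yields $f(y+z)=f(x+z)$.

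Next choose $z_k\to 0$ among these good perturbations and use continuity of $f$ to obtain $f(x)=f(y)$. Hence $f$ is constant on $B_r(\overline{x})$.

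The only delicate step is the Fubini argument. It guarantees that almost every translated segment meets the null set $B_r(\overline{x})\setminus\Omega$ only in a set of one-dimensional measure zero, and a single fixed segment could lie entirely outside $\Omega$. Everything else is routine. Alternatively, one could invoke the Clarke subdifferential: $\overline{\partial}f(x)=\co\overline{\nabla}f(x)=\{0\}$ near $\overline{x}$, and then apply Lebourg's mean value theorem. I would mention this as a shortcut, but the direct Fubini argument is self-contained.
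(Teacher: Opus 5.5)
Your proof is correct, but your main argument is not the paper's. The paper uses exactly the shortcut you mention at the end. Since all gradients vanish near $\overline{x}$ and $\Omega$ is dense there, $\overline{\nabla} f(x)=\{0\}$, hence $\overline{\partial} f(x)=\mathrm{co}\,\overline{\nabla} f(x)=\{0\}$ for $x$ near $\overline{x}$. Lebourg's mean value theorem then gives $f(x)-f(\overline{x})=\langle v,x-\overline{x}\rangle=0$ for some $v\in\overline{\partial} f(u)$ with $u\in(x,\overline{x})$.

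Your route combines Rademacher, Fubini over translated segments, and absolute continuity of $f$ along almost every such segment. This is essentially the measure-theoretic argument inside Clarke's gradient formula $\overline{\partial} f=\mathrm{co}\,\overline{\nabla} f$, whose standard proof also integrates along almost every line in a fixed direction. So you reprove from first principles what the paper imports as a black box.

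What your version buys is self-containment: it needs only Rademacher, Tonelli, and the fundamental theorem of calculus for Lipschitz functions of one variable. The joint measurability needed for Fubini is harmless: the shear $(z,t)\mapsto(z+t(y-x),t)$ has unit determinant, so the set of bad $(z,t)$ is null. What the paper's version buys is a two-line proof within the Clarke/Lebourg toolkit it uses again later, e.g.\ in the proof of Fermat's rule.

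One small point: your remark about restricting to a dense set of pairs is never used, since the perturbation $z_k\to 0$ already handles every pair $x,y$ directly.
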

\begin{proof}
    We have $\overline{\partial} f(x)=\co \overline{\nabla}f(x)=\{0\}$ for all $x\in \R\p n$ near $\overline{x}$. Fix $x\in \R\p n\setminus\{\overline{x}\}$ near $\overline{x}$. By the Lebourg mean value theorem \cite[Theorem 2.3.7]{clarke1990}, there exists $u\in(x,\overline{x})$ such that $f(x)-f(\overline{x}) = \langle v,x-\overline{x}\rangle$ for all $v \in \overline{\partial}f(u)=\{0\}$. Hence $f(x) = f(\overline{x})$.
\end{proof}

\begin{proposition}
\label{prop:normal_nonempty}
    If $f:\R\p n \to \R$ is locally Lipschitz, then $\widehat\nabla f$ is locally bounded with nonempty values and has a closed graph.
\end{proposition}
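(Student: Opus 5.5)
Three properties of $\widehat\nabla f = \cl \widetilde\nabla f$ are needed. Local boundedness and the closed graph are nearly free; nonemptiness carries the real content.

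\emph{Closed graph.} I read $\cl$ as closure of the graph, so $\gph \widehat\nabla f = \overline{\gph \widetilde\nabla f}$, which is closed by construction. In sequential terms, $v\in\widehat\nabla f(\overline x)$ iff there exist $(x_k,v_k)\in\gph\widetilde\nabla f$ with $(x_k,v_k)\to(\overline x,v)$. This description matches the one used in the proof of \cref{lemma:normal_Bouligand}.

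\emph{Local boundedness.} Every element of $\widetilde\nabla f(x)$ is either a unit vector or $0$. Hence $\widetilde\nabla f(\R^n)$ lies in the closed unit ball, and so does its closure. Therefore $|\widehat\nabla f(x)|_+\le 1$ for every $x$, which gives local boundedness everywhere. Lipschitz continuity is not needed for this part.

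\emph{Nonempty values.} Fix $\overline x$ and let $U$ be a neighborhood of $\overline x$ on which $f$ is Lipschitz. By Rademacher's theorem, $\Omega\cap U$ has full measure, so $\Omega$ is dense near $\overline x$. I split into two cases.

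\begin{itemize}
\item \textbf{Case 1:} every neighborhood of $\overline x$ contains a point $x\in\Omega$ with $\nabla f(x)\neq 0$. Then there is a sequence $\Omega\ni x_k\to\overline x$ with $\nabla f(x_k)\ne0$. The vectors $\nabla f(x_k)/|\nabla f(x_k)|$ lie on $S^{n-1}$, which is compact, so a subsequence converges to some $u$. This $u$ belongs to $\widehat\nabla f(\overline x)$.
\item \textbf{Case 2:} there is a neighborhood $V\subseteq U$ of $\overline x$ with $\nabla f(x)=0$ for all $x\in\Omega\cap V$. By \cref{lemma:constant}, $f$ is constant near $\overline x$. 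Then $\widetilde\nabla f(\overline x)=\{0\}$, so $0\in\widehat\nabla f(\overline x)$.
\end{itemize}

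The main obstacle is the dichotomy in the nonemptiness step, specifically Case 2. Without Lipschitz continuity, vanishing gradients on a dense set need not force $f$ to be locally constant; the Cantor function is a counterexample. In that situation $\widetilde\nabla f$ could be empty on a whole neighborhood. \cref{lemma:constant}, through the Lebourg mean value theorem, is exactly what rules this out, and Rademacher's theorem is what guarantees that $\Omega$ is dense.
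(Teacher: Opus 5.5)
Your proof is correct and follows the paper's argument essentially verbatim. The closed graph and local boundedness come straight from the definition; the paper just asserts this, while you spell out that all values lie in the closed unit ball. Nonemptiness uses the same dichotomy. If $f$ is locally constant, $0\in\widehat\nabla f(\overline x)$; otherwise \cref{lemma:constant} yields differentiability points $x_k\to\overline x$ with $\nabla f(x_k)\neq 0$, and compactness of $S^{n-1}$ gives a limit of the normalized gradients.
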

\begin{proof}
    The only thing we need to prove is that the values are nonempty. Suppose $f$ is not constant near $\overline{x}\in \R\p n$. Since $f$ is locally Lipschitz, by Rademacher's theorem $\Omega$ is dense in $\R\p n$. By \cref{lemma:constant}, there exists $\Omega \ni x_k\to \overline{x}$ such that $\nabla f(x_k)\neq 0$. Since the unit sphere is compact, $\widetilde\nabla f(x_k) = \nabla f(x_k)/|\nabla f(x_k)|$ has a limit point $v \in (\cl\widetilde\nabla f)(\overline{x}) = \widehat{\nabla} f(\overline{x})\neq \emptyset$. 
\end{proof}

With this definition in place, we now check a key assumption for the theory of d-stability.

\begin{proposition}
    \label{prop:normal_assumption}
    If $f:\R\p n \to \R$ is locally Lipschitz semi-algebraic, then \cref{assume:Ff} holds with $F = -\widehat \nabla f$ and $D=\overline{\partial} f$.
\end{proposition}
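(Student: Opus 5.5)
The plan is to verify the two items of \cref{assume:Ff} separately. Item \ref{item:F} should follow directly from \cref{prop:normal_nonempty}. Item \ref{item:f} has three parts: the conservative field property, the key angle inequality, and isolation of the critical value $0$. Semi-algebraicity should be needed only for the first and third parts.

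\textbf{Item \ref{item:F}.} By \cref{prop:normal_nonempty}, $\widehat\nabla f$ is locally bounded with nonempty values and a closed graph. Its values are therefore nonempty and compact. A locally bounded map with closed graph is upper semicontinuous, by the standard compactness argument with a sequence escaping a neighborhood of $F(\overline{x})$. These properties pass to $F=-\widehat\nabla f$.

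\textbf{Conservative field and critical values.} For locally Lipschitz semi-algebraic $f$, the Clarke subdifferential $D=\overline{\partial} f$ is a conservative field with potential $f$ by \cite{bolte2020conservative}. It has a closed graph with nonempty compact values, and the chain rule along absolutely continuous curves holds. The set of Clarke critical values $f(D^{-1}(0))$ is finite by the semi-algebraic Morse--Sard theorem. Hence some $\overline{\ell}>0$ satisfies $(0,\overline{\ell})\cap f(D^{-1}(0))=\emptyset$.

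\textbf{Key inequality.} Fix $\widehat{x}$, take $\rho=1$, and let $L$ be a Lipschitz constant of $f$ on $B_2(\widehat{x})$. I expect $\kappa(w)=|w|^2/L$ to work, which lies in $\cP$. Fix $x\in B_1(\widehat{x})$ and let $\overline{v}$ be the minimal norm element of the compact convex set $\overline{\partial}f(x)$. By the projection characterization, $\langle g,\overline{v}\rangle\ge|\overline{v}|^2$ for all $g\in\overline{\partial}f(x)$. Moreover $\min_{w\in D(x)}\kappa(w)=|\overline{v}|^2/L$.

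Now take $u\in F(x)$. By the definition of $\widehat\nabla f$ as a closure, there are two cases.
\begin{enumerate}[label=(\alph*)]
\item $u=0$, or $u$ is a limit of $-\nabla f(x_k)/|\nabla f(x_k)|$ with $\Omega\ni x_k\to x$ and $\nabla f(x_k)\to 0$. In both subcases $0\in\overline{\nabla}f(x)\subseteq\overline{\partial}f(x)$; in the constant subcase this uses that $f$ is constant near points $x_k\to x$. Hence $\overline{v}=0$, and the required inequality $\langle u,\overline{v}\rangle\le 0$ holds trivially.
\item Along a subsequence $\nabla f(x_k)\to g\neq 0$. Then $g\in\overline{\nabla}f(x)\subseteq\overline{\partial}f(x)$, $|g|\le L$ and $u=-g/|g|$. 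This gives
$$\langle u,\overline{v}\rangle=-\frac{\langle g,\overline{v}\rangle}{|g|}\le-\frac{|\overline{v}|^2}{L}.$$
\end{enumerate}

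So every $u\in F(x)$ satisfies $\langle u,\overline{v}\rangle\le-|\overline{v}|^2/L$ with the same $\overline{v}$. This linear inequality in $u$ persists under convex combinations, so it holds for all $u\in\co F(x)$. Therefore
$$\max_{u\in\co F(x)}\min_{v\in D(x)}\langle u,v\rangle\le-\min_{w\in D(x)}\kappa(w).$$

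The main obstacle is this key inequality, because $\widehat\nabla f$ is not itself a conservative field and $\co F(x)$ may mix directions. Choosing a single test vector, the minimal norm Clarke subgradient, uniformly for all of $\co F(x)$ is what resolves this. The delicate case is (a), where normalized gradients arise from vanishing gradients; it must be handled by noting that $0\in\overline{\partial}f(x)$ there.
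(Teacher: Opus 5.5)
Your proposal is correct and takes essentially the same route as the paper. Like the paper, you use the minimal-norm Clarke subgradient $P_{\overline{\partial} f(x)}(0)$ as a single test vector for all of $-\co\widehat\nabla f(x)$ with $\kappa(w)=|w|^2/L$, and you handle item (i), the chain rule, and Morse--Sard the same way; the only difference is that you split cases directly from the closure definition of $\widehat\nabla f$ instead of on whether $0\in\overline{\partial} f(x)$ plus \cref{lemma:normal_Bouligand}, which amounts to reproving that lemma inline.
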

\begin{proof}
    \cref{assume:Ff} \ref{item:F} is due to \cref{prop:normal_nonempty} and \cite[Fact 2.4]{josz2025lyapunov}. As for \cref{assume:Ff} \ref{item:f}, the fact that $f$ is a potential and the existence of an isolated critical value are due to the chain rule \cite[Corollary 5.4]{drusvyatskiy2015curves} and the semi-algebraic Morse-Sard theorem \cite[Corollary 9]{bolte2007clarke}. Let $L>0$ denote a Lipschitz constant of $f$ near $\widehat x$. Fix $x \in \R\p n$ and $v = P_{\overline{\partial}f(x)}(0)$. If $0 \in \overline{\partial} f(x)$, then $v=0$ and $\langle u,v\rangle = 0 = -|v|\p 2/L$ for all $u\in -\co\widehat\nabla f(x)$. Assume $0 \notin \overline{\partial} f(x)$. By characterization of the projection, $\langle w-v,0-v\rangle \leq 0$ for all $w \in \overline{\partial} f(x)$. In other words, $\langle -w,v\rangle \leq -|v|\p 2$ and $\langle -w/|w|,v\rangle \leq -|v|\p 2/L$ for all $w \in \overline{\partial} f(\overline x)\subseteq \overline B_L(0)\setminus \{0\}$. 
    
    Let $u\in -\co\widehat\nabla f(x)$. There exist $\lambda_1,\hdots,\lambda_p\geq 0$ and $u_1,\hdots,u_p\in -\widehat\nabla f(x)$ such that $\lambda_1+\cdots+\lambda_p=1$ and $u = \sum_i \lambda_i u_i$. Since $\overline{\nabla}f(x)\subseteq \overline{\partial} f(x) \subseteq \R\p n \setminus \{0\}$, by \cref{lemma:normal_Bouligand} there exists $w_i \in \overline{\nabla}f(x)$ such that $u_i = -w_i/|w_i|$ and $|w_i|\leq L$. Hence
    \begin{equation*}
        \langle u,v\rangle = \left\langle \sum_i \lambda_iu_i,v\right\rangle = \sum_i \lambda_i\langle -w_i/|w_i|,v\rangle \leq \sum_i \lambda_i|v|\p 2/L = -|v|\p 2/L.
    \end{equation*}
    Therefore
    $$\max_{u\in F(x)}\min_{v\in D(x)} \langle u , v \rangle \leq -\inf_{w\in D(x)} \kappa(w)$$
    where $\kappa(w) = |w|\p 2/L$ is continuous and positive definite.
\end{proof}

The following corollary is immediate.

\begin{corollary}
    \label{cor:descent}
    Let $f:\R\p n\to \R$ be locally Lipschitz semi-algebraic and $x:[a,b]\to \R\p n$ be a trajectory of $-\co\widehat\nabla f$. Then  
    $$\forae t\in [a,b], ~~~ (f\circ x)'(t) \leq - d(0,\overline{\partial} f(x(t)))\p 2/L$$ 
    where $L$ is a Lipschitz constant of $f$ on a neighborhood of $\co( x([a,b]))$.
\end{corollary}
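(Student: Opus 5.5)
The plan is to combine the chain rule for the conservative field $\overline{\partial} f$ with the pointwise inequality already established in the proof of \cref{prop:normal_assumption}.

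First, the trajectory $x$ is absolutely continuous on the compact interval $[a,b]$, so $x([a,b])$ is compact. Hence $\co(x([a,b]))$ is compact, and since $f$ is locally Lipschitz, a finite covering argument gives an open neighborhood $U$ of $\co(x([a,b]))$ on which $f$ is $L$-Lipschitz. Every Clarke subgradient at a point of $U$ then has norm at most $L$. Since $f$ is locally Lipschitz semi-algebraic, $\overline{\partial} f$ is a conservative field for $f$, by the chain rule \cite[Corollary 5.4]{drusvyatskiy2015curves} invoked in \cref{prop:normal_assumption}. After an affine reparametrization of $[a,b]$ onto $[0,1]$, this yields
$$\forae t\in[a,b],~ \forall v\in\overline{\partial} f(x(t)),~~~ (f\circ x)'(t)=\langle v,x'(t)\rangle.$$
Intersecting with the full-measure set where $x'(t)\in -\co\widehat\nabla f(x(t))$, we work at a fixed such $t$ and set $v=P_{\overline{\partial} f(x(t))}(0)$.

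Second, we reuse the computation from \cref{prop:normal_assumption}. If $0\in\overline{\partial} f(x(t))$, then $v=0$, so $(f\circ x)'(t)=\langle v,x'(t)\rangle=0$, and the claimed bound $-d(0,\overline{\partial} f(x(t)))^2/L=0$ holds with equality. Otherwise the projection characterization gives $\langle w,v\rangle\geq |v|^2$ for all $w\in\overline{\partial} f(x(t))$. Moreover $0\notin\overline{\nabla} f(x(t))\subseteq\overline{\partial} f(x(t))$, so \cref{lemma:normal_Bouligand} writes each element of $\widehat\nabla f(x(t))$ as $w/|w|$ with $w\in\overline{\nabla} f(x(t))$ and $0<|w|\leq L$. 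Writing $x'(t)$ as a convex combination $-\sum_i\lambda_i w_i/|w_i|$, we get
$$(f\circ x)'(t)=\langle v,x'(t)\rangle\leq -\sum_i\lambda_i|v|^2/|w_i|\leq -|v|^2/L=-d(0,\overline{\partial} f(x(t)))^2/L.$$

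The main point requiring care is that the constant $L$ is uniform along the whole trajectory, since the bound $|w_i|\leq L$ is needed at every time $t$. This is where the neighborhood of $\co(x([a,b]))$ is used: it contains each point $x(t)$ together with an open ball around it. The bound on Bouligand gradients at $x(t)$ comes from limits of gradients at nearby differentiability points lying inside $U$, and the Clarke subgradients are convex combinations of these limits, so $|w|\leq L$ holds for all $w\in\overline{\partial} f(x(t))$. Everything else is a direct transcription of the argument in \cref{prop:normal_assumption}.
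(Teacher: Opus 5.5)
Your proof is correct and follows the paper's route. The paper's proof just cites \cref{prop:normal_assumption}, and you unpack its two ingredients: the chain rule making $\overline{\partial} f$ conservative, and the bound $\langle u,v\rangle\leq -|v|^2/L$ for $v=P_{\overline{\partial} f(x)}(0)$ and $u\in-\co\widehat\nabla f(x)$. You also correctly check that $L$ is uniform along the whole trajectory.
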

\begin{proof}
    This is a consequence of \cref{prop:normal_assumption}.
\end{proof}

The normalized subdifferential, contrary to the Bouligand subdifferential, is in general not a conservative field for $f$ since otherwise $\overline{\partial} f(x) \subseteq \co \widehat\nabla f(x) \subseteq S\p{n-1}$ by \cite[Corollary 1]{bolte2020conservative}. Hence the importance of considering general set-valued dynamics $F$ in \cref{subsec:Lyapunov_stability}. One should refrain from convexifying $\widehat\nabla f$, otherwise it will generally not be possible to find $p$-d-Lyapunov functions. 

We end this section with some facts pertaining to critical points of the normalized subdifferential. For any function and any point, $0\in \widehat\nabla f(\overline x) \implies 0\in \overline{\nabla}f(\overline x)$ but the converse is false (for e.g., $f(x) = x\p 2$). One also has the following.
\begin{proposition}
    \label{prop:critical}
    Suppose $f:\R\p n\to \R$ is Lipschitz continuous near $\overline{x}\in \R\p n$. Then $0\in \co\widehat\nabla f(\overline x) \implies 0\in \overline{\partial} f(\overline x)$.
\end{proposition}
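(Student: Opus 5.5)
We argue by contraposition and reuse the projection argument from the proof of \cref{prop:normal_assumption}. Suppose $0\notin\overline{\partial} f(\overline x)$. We aim to exhibit a vector $v$ with $\langle u,v\rangle<0$ for every $u\in\co\widehat\nabla f(\overline x)$ after a sign change. This separates $0$ from $\co\widehat\nabla f(\overline x)$.

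Let $L>0$ be a Lipschitz constant of $f$ near $\overline x$, so that $\overline{\partial} f(\overline x)$ is nonempty, compact, convex and contained in $\overline B_L(0)$. Set $v=P_{\overline{\partial}f(\overline x)}(0)$. Then $v\neq 0$, and the variational characterization of the projection gives $\langle w,v\rangle\geq|v|\p 2$ for all $w\in\overline{\partial}f(\overline x)$. In particular $0\notin\overline\nabla f(\overline x)$, since $\overline\nabla f(\overline x)\subseteq\co\overline\nabla f(\overline x)=\overline{\partial}f(\overline x)$ for locally Lipschitz $f$. This inclusion is the same fact used in the proof of \cref{lemma:constant}.

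Since $0\notin\overline\nabla f(\overline x)$, \cref{lemma:normal_Bouligand} applies. Every $u_i\in\widehat\nabla f(\overline x)$ has the form $w_i/|w_i|$ with $w_i\in\overline\nabla f(\overline x)$ and $0<|w_i|\leq L$. Hence
$$\langle u_i,v\rangle=\frac{\langle w_i,v\rangle}{|w_i|}\geq \frac{|v|\p 2}{L}.$$
Now take any convex combination $u=\sum_i\lambda_iu_i\in\co\widehat\nabla f(\overline x)$. Since $\widehat\nabla f(\overline x)$ is compact by \cref{prop:normal_nonempty}, its convex hull consists of finite convex combinations by Carathéodory, and is closed. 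Linearity then gives $\langle u,v\rangle\geq|v|\p 2/L>0$, so $u\neq 0$. Thus $0\notin\co\widehat\nabla f(\overline x)$, which is the contrapositive.

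The only delicate point is justifying the inclusion $\overline\nabla f(\overline x)\subseteq\overline{\partial}f(\overline x)$, which is needed to invoke \cref{lemma:normal_Bouligand}. This is standard: for locally Lipschitz $f$, $\overline{\partial}f=\co\overline\nabla f$ by \cite[Theorem 9.61]{rockafellar2009variational}. With that in hand, the rest is a direct repetition of the computation in \cref{prop:normal_assumption}.
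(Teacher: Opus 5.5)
Your proof is correct and follows essentially the same route as the paper: you project $0$ onto $\overline{\partial} f(\overline x)$, use \cref{lemma:normal_Bouligand} to write elements of $\co\widehat\nabla f(\overline x)$ as convex combinations of $w_i/|w_i|$ with $w_i\in\overline\nabla f(\overline x)$, and obtain a strictly positive inner product with $v$. The only cosmetic differences are two. First, you use the Lipschitz bound $|w_i|\leq L$ to get the uniform bound $|v|^2/L$, whereas the paper only needs $\sum_i\lambda_i/|w_i|>0$. Second, your Carath\'eodory/closedness remark is unnecessary, since elements of a convex hull are finite convex combinations by definition.
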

\begin{proof}
    We argue by contradiction and suppose that $v = P_{\overline{\partial} f(\overline x)}(0)\neq 0$. Since $0 \notin \overline{\nabla}f(x)$, by \cref{lemma:normal_Bouligand}, there exist $\lambda \in \Delta\p{n-1}$ and $v_i \in \overline{\nabla} f(\overline x)$ such that $0 = \sum_i \lambda_i v_i/|v_i|$. By characterization of the projection, $\langle 0-v,w-v\rangle \leq 0$ for all $w \in \overline{\partial} f(\overline x)$, i.e., $\langle v, w \rangle \geq |v|\p 2$. This yields the contradiction $0=\langle v, \sum_i \lambda_i v_i/|v_i| \rangle = \sum_i \lambda_i \langle v,  v_i \rangle /|v_i| \geq \sum_i \lambda_i |v|\p 2 /|v_i| = |v|\p 2\sum_i \lambda_i/|v_i|>0$.
\end{proof}

Again, the converse is false in \cref{prop:critical} (for e.g., $f(x) = x\p 3$). Also, $0\in \co\widehat\nabla f(\overline x) \nRightarrow 0\in \partial f(\overline x)$ (for e.g., $f(x)=-|x|$).

\begin{proposition}[Fermat's rule]
    \label{prop:Fermat}
    Let $f:\R\p n \to \R$ be locally Lipschitz. If $\overline{x}\in \R\p n$ is a local extremum of $f$, then $0 \in \co \widehat\nabla f(\overline{x})$.
\end{proposition}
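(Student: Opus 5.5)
We argue by contradiction and reduce to the case of a local minimum; the case of a local maximum follows by applying the result to $-f$. Indeed, $\widetilde\nabla(-f) = -\widetilde\nabla f$ pointwise (differentiable points coincide, normalized gradients flip sign, and local constancy is preserved), so $\widehat\nabla(-f) = -\widehat\nabla f$ and $0\in\co\widehat\nabla(-f)(\overline x)$ iff $0\in\co\widehat\nabla f(\overline x)$. So assume $\overline x$ is a local minimum of $f$. If $f$ is constant near $\overline x$, then $0\in\widetilde\nabla f(\overline x)\subseteq \widehat\nabla f(\overline x)$ and we are done. Otherwise, by \cref{prop:normal_nonempty}, $\widehat\nabla f(\overline x)$ is nonempty and compact, and it lies in $S^{n-1}\cup\{0\}$.

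Suppose $0\notin \co\widehat\nabla f(\overline x)$. Since this set is compact and convex, we may take $d = P_{\co\widehat\nabla f(\overline x)}(0)\neq 0$. The projection characterization gives $\langle u,d\rangle\geq |d|^2=:2c>0$ for all $u\in\widehat\nabla f(\overline x)$. Because $\widehat\nabla f$ has a closed graph and is locally bounded, it is upper semicontinuous. Hence there is $r>0$ such that for all $x\in B_r(\overline x)$ and all $u\in\widehat\nabla f(x)$, we have $\langle u,d\rangle \geq c$. In particular, no point of $B_r(\overline x)$ is a point near which $f$ is constant, since that would put $0$ in $\widehat\nabla f(x)$. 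It follows that at every differentiable point $x\in\Omega\cap B_r(\overline x)$ we have $\nabla f(x)=0$ or $\langle \nabla f(x),d\rangle\geq c|\nabla f(x)|\geq 0$.

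We now show that $f$ strictly increases in the direction $d$, which will contradict minimality. Consider $h(t)=f(\overline x - t d)$ for small $t\ge 0$. By Fubini and Rademacher, for almost every nearby starting point $y$, the segment $s\mapsto y-sd$ meets $\Omega$ for almost every $s$, and there $\frac{d}{ds}f(y-sd) = -\langle\nabla f(y-sd),d\rangle\leq 0$. Thus $f$ is nonincreasing along such segments, and by continuity $f(\overline x - td)\le f(\overline x)$ for all small $t\geq 0$.

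Nonincrease alone does not contradict minimality, so we need a strict decrease. We invoke \cref{lemma:constant}: since $f$ is not constant on any small ball around $\overline x$, there are differentiable points arbitrarily close to $\overline x$ with $\nabla f\neq 0$. We need these on a set of positive measure in a thin tube around a segment. I expect this to be the main obstacle. The plan is as follows. If $\nabla f=0$ almost everywhere on an open ball inside $B_r(\overline x)$, then $f$ is constant there by \cref{lemma:constant} (applied to the a.e. version via the Clarke subdifferential argument), which contradicts the previous step. So for every ball inside $B_r(\overline x)$, the set $\{\nabla f\ne0\}$ has positive measure, and on that set $\langle\nabla f,d\rangle\ge c|\nabla f|>0$.

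We then integrate. Since $f(y-td)-f(y)=-\int_0^t\langle\nabla f(y-sd),d\rangle\,ds$ for almost every $y$, averaging over $y$ in a small ball $B_\rho(\overline x)$ yields
$$\int_{B_\rho(\overline x)} \bigl(f(y-td)-f(y)\bigr)\,dy<0.$$
Hence there is $y$ arbitrarily close to $\overline x$ with $f(y-td)<f(y)$. This is still not a direct contradiction, because $y$ itself need not be a minimizer. To fix this, we pick the trajectory starting at $\overline x$. Combining monotonicity along almost every segment with continuity, we get $f(y'-td)\le f(y')$ for every $y'$ near $\overline x$. Taking $y'=\overline x+ td$ with $y'$ chosen so that the strict decrease occurs somewhere on $[y'-td,y']$, we obtain $f(\overline x)\le f(\overline x + sd)$ together with a strict inequality near some point. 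The cleanest route is to take a Lebesgue-density point of $\{\nabla f\neq0\}$ and shift it onto the line through $\overline x$ using the continuity of $f$. Combined with minimality, which gives $f(\overline x - td)\ge f(\overline x)$, this forces $f(\overline x - td)=f(\overline x)$ on a whole segment, and this equality is propagated via upper semicontinuity to a neighborhood, making $f$ constant there. That contradicts the established non-constancy.
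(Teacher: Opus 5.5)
There is a genuine gap at the end. Your first half is correct: the reduction to a minimum, the vector $d$ with $\langle u,d\rangle\geq c$ for all $u\in\widehat\nabla f(x)$ and $x\in B_r(\overline x)$, and the observation that $f$ is nowhere locally constant in $B_r(\overline x)$ all hold. The monotonicity of $f$ along $-d$ also holds; you obtain it via Fubini and Rademacher, whereas the paper uses Lebourg's mean value theorem. With minimality this yields $f(\overline x-td)=f(\overline x)$ for small $t\geq 0$.

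But equality along one segment is not a contradiction, and none of your subsequent steps produces one. The averaging argument only gives strict decrease along lines through points $y\neq\overline x$, which are not minimizers. Taking $y'=\overline x+td$ gives $f(\overline x)\le f(\overline x+td)$, which is consistent with minimality. Shifting a density point onto the line through $\overline x$ ``by continuity'' cannot work: strict inequalities do not survive limits, and that line is a null set on which $f$ may be nondifferentiable everywhere. Finally, ``propagated via upper semicontinuity to a neighborhood'' is asserted with no argument.

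The two ingredients you actually use at the end, monotonicity along one direction and minimality, are genuinely insufficient. For instance, $f(x_1,x_2)=|x_2|$ at the origin with $d=(1,0)$ satisfies both, yet $f$ is nowhere locally constant.

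The missing idea is that your uniform bound is robust under perturbing the direction. Since $|u|\leq 1$ for $u\in\widehat\nabla f(x)$, for every $e$ with $|e-d|<c/2$ you get $\langle u,e\rangle\geq c-|e-d|> c/2>0$ for all $x\in B_r(\overline x)$ and $u\in\widehat\nabla f(x)$. Your monotonicity argument then applies verbatim along every such direction $e$. Combined with minimality, it gives $f(\overline x-te)=f(\overline x)$ for all $e\in B_{c/2}(d)$ and all small $t>0$.

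The set $\bigcup_{0<t<t_0}B_{tc/2}(\overline x-td)$ is open and accumulates at $\overline x$, and $f$ is constant on it. This contradicts your own observation that $f$ is nowhere locally constant in $B_r(\overline x)$. Equivalently, $0\in\widetilde\nabla f$ at points converging to $\overline x$, so $0\in\widehat\nabla f(\overline x)$.

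This is exactly how the paper concludes. It lets the direction range over $B_\epsilon(v/|v|)\cap S^{n-1}$ and obtains constancy on a full-dimensional cone with vertex $\overline x$. With this step in place, the whole strict-decrease detour can be dropped.
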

\begin{proof}
    Suppose $0 \notin \co \widehat\nabla f(\overline{x})$. By \cref{prop:normal_nonempty} and Carathéodory's theorem, $\co \widehat\nabla f(\overline{x})$ is a nonempty closed convex set. Let $v= P_{\co \widehat\nabla f(\overline{x})}(0) \neq 0$. The characterization of the projection yields $\langle0-v,w-v\rangle \leq 0$ for all $w \in \co \widehat\nabla f(\overline{x})$. In particular, $\langle w, v/|v| \rangle \geq |v|>0$ for all $w \in \widehat\nabla f(\overline{x})$. Since $\gph \widehat\nabla f$ is closed by \cref{prop:normal_nonempty}, there exists $\epsilon>0$ such that $\langle w,u\rangle\geq |v|/2$ for all $x\in B_\epsilon(\overline{x})$, $w\in \widehat\nabla f(x)$, and $u\in B_\epsilon(v/|v|)$. After possibly reducing $\epsilon$, $f(x)\geq f(\overline{x})$ for all $x\in B_\epsilon(\overline{x})$, assuming $\overline{x}$ is a local minimum of $f$ (the proof is similar if it is a local maximum).
    
    Let $u \in B_\epsilon(v/|v|)\cap S\p {n-1}$ and $t \in(0,\epsilon]$.
    By the Lebourg mean value theorem \cite[Theorem 2.3.7]{clarke1990}, there exist $a\in (\overline{x},\overline{x}-tu)$ and $b\in \overline\partial f(a)$ such that $0 \leq f(\overline x-tu)-f(\overline x) = \langle b , -tu\rangle = -t\langle b , u \rangle \leq 0$. Indeed, if $b\neq 0$, then there exist $\lambda_1,\hdots,\lambda_p\geq 0$ and $b_1,\hdots,b_p\in \overline\nabla f(a)\setminus\{0\}$ such that $\lambda_1+\cdots+\lambda_p=1$ and $b = \sum_i \lambda_i b_i$. It is easy to see that $b_i/|b_i|\in\widehat\nabla f(a)$. Thus $\langle b_i/|b_i|,u\rangle\geq |v|/2$ and $\langle b , u\rangle = \sum_i \lambda_i \langle b_i , u\rangle\geq 0$. As a result, $f$ is constant on a full dimensional cone near $\overline{x}$, on which $\widetilde\nabla f(x) = \{0\}$. Thus $0\in (\cl \widetilde\nabla f)(\overline x)=\widehat\nabla f(\overline{x})$, a contradiction.
\end{proof}

\begin{lemma}
    \label{lemma:nonzero}
    Let $f:\R\p n\to \R$ be locally Lipschitz semi-algebraic and $\overline{x}\in\R\p n$ be such that $\inte [f=f(\overline{x})]=\emptyset$. Then $0 \notin \widehat\nabla f(x)$ for all $x\in \R\p n$ near $\overline{x}$.
\end{lemma}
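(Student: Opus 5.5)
The plan is to argue by contradiction and use the definition $\widehat\nabla f=\cl\widetilde\nabla f$ directly. The key observation is that $\widetilde\nabla f$ only takes values in $S^{n-1}\cup\{0\}$. Moreover, it takes the value $0$ exactly on the set
$$C=\{y\in\R\p n : f ~\text{is constant near}~ y\}.$$
So suppose $0\in\widehat\nabla f(x)$. Then there exist $(x_k,v_k)\in\gph\widetilde\nabla f$ with $(x_k,v_k)\to(x,0)$. Since $|v_k|\in\{0,1\}$, we must have $v_k=0$ eventually, hence $x_k\in C$ eventually, and so $x\in\cl C$. Thus the statement reduces to showing that $\overline{x}\notin\cl C$. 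Indeed, $\cl C$ is closed, so $\overline x\notin \cl C$ gives a neighborhood of $\overline{x}$ disjoint from $\cl C$, on which $0\notin\widehat\nabla f$.

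Next I would record the structure of $C$. It is open by definition. It is semi-algebraic because it is defined by the first-order formula
$$\exists r>0,~\forall z,~ |z-y|<r \Rightarrow f(z)=f(y),$$
and $f$ is semi-algebraic, so the Tarski--Seidenberg principle applies. Hence $C$ has finitely many connected components $C_1,\hdots,C_m$, each open. On each $C_i$ the function $f$ is locally constant, hence constant, say equal to $c_i$, by connectedness.

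Now suppose, for contradiction, that $\overline{x}\in\cl C=\bigcup_{i=1}^m \cl C_i$; this equality uses the finiteness of the decomposition. Then $\overline x\in\cl C_i$ for some $i$. By continuity of $f$, we get $f(\overline x)=c_i$. Consequently the nonempty open set $C_i$ is contained in $[f=f(\overline x)]$, so $\inte[f=f(\overline{x})]\neq\emptyset$, which contradicts the hypothesis.

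The only delicate step is the passage from "$\overline x$ is a limit of points of $C$" to "$\overline x$ is adherent to a single region where $f$ is constant". Without semi-algebraicity, the constant values on infinitely many small components could accumulate at $\overline{x}$ while differing from $f(\overline{x})$. The finiteness of connected components of the semi-algebraic set $C$ is exactly what rules this out. Local Lipschitz continuity is used here only through the continuity of $f$.
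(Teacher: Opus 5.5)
Your proof is correct, but it controls the values of $f$ on $C$ differently from the paper. Both arguments begin with the same observation: a zero in $\widehat\nabla f(x)$ can only come from points $y\to x$ with $\widetilde\nabla f(y)=\{0\}$, that is, $y\in C$. Both then need that $f$ takes only finitely many values on $C$, which forces points of $C$ close to $\overline{x}$ into $[f=f(\overline{x})]$.

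The paper gets this finiteness from the semi-algebraic Morse--Sard theorem. Points of $C$ satisfy $\overline{\partial} f(y)=\{0\}$, so $f(C)$ lies in the finite set of Clarke critical values. A diagonal sequence $y_k^k\to\overline{x}$ then eventually lies in $\inte[f=f(\overline{x})]$, a contradiction.

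You get it instead from the finiteness of connected components of the open semi-algebraic set $C$, via Tarski--Seidenberg, with $f$ constant on each component. Your route is more elementary and more general. It uses no subdifferential calculus and no Lipschitz continuity, only continuity and semi-algebraicity (or definability) of $f$. It also gives a slightly sharper structural statement: $\{x : 0\in\widehat\nabla f(x)\}\subseteq \cl C$, a finite union of closures of open sets on which $f$ is constant.

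The paper's route is shorter in context, because the Morse--Sard theorem is already a standing tool throughout the manuscript.
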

\begin{proof}
    Suppose there is $x_k \to \overline{x}$ such that $0 \in \widehat\nabla f(x_k)$. Then there exists $y_k\p i \to x_k$ such that $\widetilde\nabla f(y_k\p i) = \{0\}$, and in particular, $\overline{\partial} f(y_k\p i)=\{0\}$. Since $f$ is locally Lipschitz semi-algebraic, there are no Clarke critical values in a vicinity of $f(\overline{x})$ by \cite[Corollary 9]{bolte2007clarke}. Since $y_k\p k\to \overline{x}$ and thus $f(y_k\p k)\to f(\overline{x})$, one has $f(y_k\p k) = f(\overline{x})$ eventually. But then $y_k\p k \in \inte[f=f(\overline{x})]=\emptyset$, a contradiction.
\end{proof}


\subsection{Effect of discretization}
\label{subsec:Effect of discretization}


In order to understand the effect of discretizing normalized subdifferential dynamics on stability, we will rely on stratification theory \cite{mather1970notes,trotman2020stratification}. 
A $C^k$ stratification of a subset $S$ of $\R\p n$ with $k\in\N\p*$ is a finite partition $\mathcal{X}$ of $S$ into $C^k$ embedded submanifolds of $\R\p n$ such that for all $X,Y \in \mathcal{X}$, if $X \cap \overline{Y} \neq \emptyset$, then $X \subseteq \overline{Y}\setminus Y$. A stratification is semi-algebraic if each element of $\mathcal{X}$, called stratum, is semi-algebraic. It is compatible with a family of subsets of $\R\p n$ if each subset is a union of strata.

A pair of submanifolds $(X,Y)$ of $\mathbb{R}^n$ fulfills the Whitney-(a) condition at $\overline{x} \in X$ if 
$$d(T_xX,T_yY) = o(1)$$ 
for $x\in X$ and $y\in Y$ near $\overline{x}$.
It satisfies the Verdier condition at $\overline{x} \in X$ if 
$$d(T_xX,T_yY) = O(|x-y|)$$ 
for $x\in X$ and $y\in Y$ near $\overline{x}$. Accordingly, a Whitney-(a) (respectively Verdier) stratification of a subset of $\R\p n$ is one in which every pair of strata $(X,Y)$ such that $X \subseteq \overline{Y}\setminus Y$ satisfies the Whitney-(a) (respectively Verdier) condition. They yield variational stratifications.

Let $f:\mathbb{R}^n\rightarrow \overline{\mathbb{R}}$ and $X$ be a submanifold of $\R\p n$ such that $f|_X$ is $C\p 1$. The covariant gradient of $f$ at $\overline{x}\in X$ is defined by $\nabla_X f(\overline{x})=P_{T_{\overline{x}}X}(\nabla \bar{f}(\overline{x}))$ where $\bar{f}$ is any $C^1$ smooth function defined on a neighborhood $U$ of $\overline{x}$ in $\mathbb{R}^n$ and that agrees with $f$ on $U \cap X$. 

\begin{definition}
    \label{def:projection_formulae}
    Let $f:\mathbb{R}^n\rightarrow \eR$ and $X$ be a submanifold of $\R\p n$ such that $f|_X$ is $C\p 1$. We say that $f$ satisfies the projection formula at $\overline{x}\in X$ along $X$ if
\begin{equation*}
    P_{T_{\overline{x}}X} \partial f(\overline{x}) \subseteq \{\nabla_X f(\overline{x})\} ~~~ \text{and}~~~ P_{T_{\overline{x}}X} \partial f^\infty(\overline{x})= \{0\}.
\end{equation*}
We say that $f$ satisfies the perturbed projection formula at $\overline{x}$ along $X$ if
\begin{gather*}
  \forall v\in\partial f(y),\quad  |P_{T_xX}(v)-\nabla_X f(x)| = O(|x-y|\sqrt{1+|v|^2}) \\ \text{and} \\  \forall w\in\partial^\infty f(y),\quad |P_{T_xX}(w)| = O(|x-y||w|)
\end{gather*}
for $x \in X$ and $y \in \R\p n$ near $\overline{x}$.
\end{definition}

A $C^k$ variational Whitney (resp. Verdier) stratification of a function $f:\R^n\to \eR$ is a  $C^k$ Whitney (resp. Verdier) stratification of $\dom  f$ such that $f$ is $C^k$ on each stratum and the projection formula (resp. perturbed projection formula) holds at all $\overline{x}\in \dom  f$. 

If a function $f:\R\p n\to \eR$ is lower semicontinuous (resp. locally Lipschitz continuous on its domain) and semi-algebraic, then it admits a semi-algebraic variational Whitney (resp. Verdier) stratification compatible with any finite family of semi-algebraic subsets of $\R\p n$ \cite[Lemma 8]{bolte2007clarke} (resp. \cite[Theorems 3.6, 3.30]{davis2025active} and \cite[Theorem 1]{bianchi2023stochastic}). For our purposes, we adapt the projection formulae in \cref{def:projection_formulae} to the normalized subdifferential.

\begin{definition}
    \label{def:normalized_projection_formulae}
    Let $f:\mathbb{R}^n\rightarrow \eR$ and $X$ be a submanifold of $\R\p n$ such that $f|_X$ is $C\p 1$. Let $\overline{x}\in X$ and assume $X\subseteq [f=f(\overline{x})]$. We say that $f$ satisfies the normalized projection formula at $\overline{x}$ along $X$ if
\begin{equation*}
    P_{T_{\overline{x}}X} \widehat\nabla f(\overline{x}) \subseteq \{0\}.
\end{equation*}
We say that $f$ satisfies the normalized perturbed projection formula at $\overline{x}$ along $X$ if
$$\forall u\in\widehat\nabla f(y),\quad  |P_{T_xX}(u)| = O(|x-y|)$$
for $x \in X$ and $y \in \R\p n$ near $\overline{x}$.
\end{definition}

Understanding discretization requires us to analyze the continuous-time dynamics, for which we need an extra definition.

\begin{definition}
    \label{def:desingularizer}
    Given $f:\R\p n\to\R$ and $X\subseteq\R\p n$, $\psi:\R_+\to\R_+$ is a desingularizer of $f$ on $X$ if $\psi$ is a concave diffeomorphism such that
$$
\forall x \in X\setminus (\overline{\partial}\widetilde f)\p{-1}(0), ~~~ d(0,\overline{\partial} (\psi \circ \widetilde f)(x)) \geqslant 1.
$$ 
where $\widetilde f(x) = d(f(x),V)$ and $V$ is the set of Clarke critical values of $f$ in $\overline{X}$ if it is nonempty, otherwise $V=\{0\}$.
\end{definition}
If $f$ is locally Lipschitz semi-algebraic and $X$ is bounded, then $f$ admits a desingularizer on $X$ by the Kurdyka-\L{}ojasiewicz inequality \cite{kurdyka1998gradients,bolte2007clarke}. Our analysis begins by noticing that the normalized subdifferential is invariant under desingularization.

\begin{lemma}
    \label{lemma:desingularizer}
    If $f:\mathbb{R}^n \rightarrow \mathbb{R}$ is nonnegative and $\psi:\R_+\to\R_+$ is an increasing diffeomorphism, then $\widetilde\nabla f = \widetilde\nabla(\psi \circ f)$.
\end{lemma}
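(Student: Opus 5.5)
We will verify the equality $\widetilde\nabla f(\overline{x}) = \widetilde\nabla(\psi\circ f)(\overline{x})$ pointwise, checking that each of the three cases in the definition of $\widetilde\nabla$ occurs for $f$ at $\overline{x}$ exactly when it occurs for $\psi\circ f$, and that the values agree. The basic inputs are that $\psi$ is a diffeomorphism of $\R_+$, so $\psi$ and $\psi^{-1}$ are both $C\p 1$ with $\psi'>0$ on $\R_+$ (possibly after noting that the derivative of a diffeomorphism never vanishes), and that $f$ takes values in $\R_+=\dom\psi$, so $\psi\circ f$ is well defined.

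\emph{Constancy.} Since $\psi$ is injective, $f$ is constant near $\overline{x}$ iff $\psi\circ f$ is constant near $\overline{x}$. In that case both sets equal $\{0\}$. We should also check that this case does not clash with the first case: if $f$ is constant near $\overline{x}$ then $\nabla f(\overline x)=0$, so the first case is excluded for both functions.

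\emph{Differentiability and the gradient.} By the chain rule, if $f$ is differentiable at $\overline{x}$ then so is $\psi\circ f$, with $\nabla(\psi\circ f)(\overline{x}) = \psi'(f(\overline{x}))\nabla f(\overline{x})$. Conversely, writing $f=\psi^{-1}\circ(\psi\circ f)$ and using that $\psi^{-1}$ is differentiable on $\R_+$, differentiability of $\psi\circ f$ at $\overline x$ gives differentiability of $f$ at $\overline x$. So the sets $\Omega$ of differentiable points coincide. Because $\psi'(f(\overline{x}))>0$, we have $\nabla(\psi\circ f)(\overline{x})\neq0$ iff $\nabla f(\overline{x})\neq 0$, and the two normalized gradients are equal since positive scalars cancel under normalization. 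The remaining (``else'') case is the complement of the other two for both functions, so both sets are empty there.

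\emph{Main obstacle.} The one delicate point is the boundary value $f(\overline{x})=0$, where differentiability of $\psi$ and $\psi^{-1}$ is only one-sided. Since $f\geq 0$, the function $f$ only approaches $0$ from within $\R_+$, so one-sided derivatives suffice for the chain rule. Concretely, write $\psi(t)=\psi(0)+\psi'(0)t+o(t)$ for $t\to0^+$ and substitute $t=f(x)$; then $f(x)\to f(\overline{x})$ by continuity of $f$ at differentiable points. The converse direction is handled in the same way with $\psi^{-1}$. I expect this verification to be the only step needing care, and positivity of $(\psi^{-1})'$ at $\psi(0)$ follows from the fact that $\psi$ is a diffeomorphism.
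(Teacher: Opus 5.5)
Your argument is valid if ``diffeomorphism of $\R_+$'' is read literally, i.e.\ $\psi$ and $\psi^{-1}$ are $C^1$ up to the endpoint $0$ with $\psi'(0)>0$. Under that reading you prove a stronger intermediate fact: $f$ and $\psi\circ f$ have the same points of differentiability, with the boundary case $f(\overline x)=0$ handled by a one-sided chain rule.

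The paper takes a different and more economical route. It never compares the full differentiability sets, only the sets where the function is differentiable \emph{with nonzero gradient}. On these sets $f(x)>0$ automatically. Indeed, a zero of the nonnegative function $f$ is a global minimizer of $f$, and also of $\psi\circ f$ since $\psi$ is increasing, so any gradient of either function there vanishes. Hence the endpoint $0$ never enters, and the proof uses only that $\psi$ is $C^1$ with $\psi'>0$ on $(0,\infty)$. Your ``main obstacle'' simply disappears.

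This robustness matters. The lemma is applied in \cref{lemma:normalized_projection_desingularizer} to a desingularizer coming from the Kurdyka--\L{}ojasiewicz inequality. Such a $\psi$ in general has $\psi'(0^+)=\infty$, e.g.\ $\psi(t)=t^{1-\theta}$, and that proof even considers $|\nabla(\psi\circ f)(x_k)|\to\infty$.

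For such $\psi$ your intermediate claim is false, and your one-sided step using $(\psi^{-1})'(\psi(0))>0$ breaks. Take $f(x)=x^2$ and $\psi(t)=\sqrt t$. Then $\psi\circ f=|x|$ is not differentiable at $0$ while $f$ is, and $(\psi^{-1})'(0)=0$. The lemma still holds there, since both $\widetilde\nabla f(0)$ and $\widetilde\nabla(\psi\circ f)(0)$ are empty.

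Replace your last paragraph by the observation that $f(\overline x)=0$ forces $\nabla f(\overline x)=0$, and likewise for $\psi\circ f$. Then compare only the nonzero-gradient sets. With that change your proof becomes the paper's and covers the case actually needed.
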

\begin{proof}
    $f$ is constant near $x$ iff $\psi \circ f$ is constant near $x$, in which case $\widetilde\nabla f(x) = \widetilde\nabla (\psi\circ f)(x) = \{0\}$. $f$ is differentiable and $\nabla f(x)\neq 0$ iff $\psi \circ f$ is differentiable and $\nabla (\psi \circ f) \neq 0$, in which case $\widetilde\nabla f(x) = \nabla f(x)/|\nabla f(x)| = \psi'(f(x))\nabla f(x)/|\psi'(f(x))\nabla f(x)| = \nabla(\psi\circ f)(x)/|\nabla(\psi\circ f)(x)| = \widetilde\nabla (\psi\circ f)(x)$. Indeed, if $f$ is differentiable and $\nabla f(x)\neq 0$, then $f(x)>0$, otherwise $x$ is a local minimum of $f$. Thus $\psi \circ f$ is differentiable and $\nabla (\psi \circ f)(x) = \psi'(f(x)) \nabla f(x) \neq 0$ since $\psi'(f(x))>0$. Conversely, if $\psi \circ f$ is differentiable and $\nabla (\psi \circ f) \neq 0$, then $f(x)>0$, otherwise $x$ is a local minimum of $f$ and thus of $\psi \circ f$. Hence $f = \psi\p{-1} \circ (\psi \circ f)$ is differential at $x$ and $0 \neq \nabla (\psi \circ f)(x) = \psi'(f(x)) \nabla f(x)$, so that $\nabla f(x)\neq 0$.
\end{proof}

The invariance enables one to deduce a sufficient condition for the normalized projection formula to hold.

\begin{lemma}
\label{lemma:normalized_projection_desingularizer}
    Let $f:\mathbb{R}^n \rightarrow \mathbb{R}$ be nonnegative and $\psi:\R_+\to\R_+$ be a desingularizer of $f$ near $\overline{x}$. 
    If $\psi\circ f$ satisfies the projection formula at $\overline{x}$ along $X \subseteq [f=0]$, then $f$ satisfies the normalized projection formula at $\overline{x}$ along $X$. 
\end{lemma}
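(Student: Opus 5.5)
Let $h=\psi\circ f$ and $u\in\widehat\nabla f(\overline{x})$. The plan is to show that $P_{T_{\overline{x}}X}u=0$ by passing $u$ through the desingularized function $h$. By \cref{lemma:desingularizer}, $\widetilde\nabla f=\widetilde\nabla h$, and taking closures of graphs gives $\widehat\nabla f=\widehat\nabla h$. It therefore suffices to work with $h$.

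By definition of the closure, there exist $x_k\to\overline{x}$ and $u_k\in\widetilde\nabla h(x_k)$ with $u_k\to u$. We split into two cases.

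\emph{Case $u_k=0$ along a subsequence.} Then $u=0$ and $P_{T_{\overline{x}}X}u=0$ trivially.

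\emph{Case $u_k\neq0$ eventually.} Then $h$ is differentiable at $x_k$ with $\nabla h(x_k)\neq 0$ and $u_k=\nabla h(x_k)/|\nabla h(x_k)|$. As in the proof of \cref{lemma:desingularizer}, $f(x_k)>0$, for otherwise $x_k$ would be a local minimum of the nonnegative function $f$ and hence $\nabla f(x_k)=0$. Hence $x_k\notin(\overline{\partial}\widetilde f)\p{-1}(0)$ at least near $\overline{x}$ when $0$ is the relevant critical value, so \cref{def:desingularizer} gives $|\nabla h(x_k)|\geq d(0,\overline{\partial}h(x_k))\geq 1$.

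Two sub-cases follow.

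\emph{If $|\nabla h(x_k)|$ is bounded along a subsequence,} then $\nabla h(x_k)\to v$ with $|v|\geq1$. Since $h$ is continuous ($\psi$ is continuous and $f$ can be taken continuous here), $h(x_k)\to h(\overline{x})$. Gradients are regular subgradients, so $v\in\partial h(\overline{x})$. The projection formula for $h$ then gives $P_{T_{\overline{x}}X}v=\nabla_X h(\overline{x})=0$, the last equality because $h$ is constant ($=\psi(0)$) on $X\subseteq[f=0]$. Hence $P_{T_{\overline{x}}X}u=P_{T_{\overline{x}}X}v/|v|=0$.

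\emph{If $|\nabla h(x_k)|\to\infty$,} set $\tau_k=1/|\nabla h(x_k)|\searrow0$. Then $\tau_k\nabla h(x_k)=u_k\to u$, so $u\in\partial\p\infty h(\overline{x})$. The horizon part of the projection formula then yields $P_{T_{\overline{x}}X}u=0$.

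The main obstacle is the identification of the limit with a genuine subgradient or horizon subgradient of $h$ at $\overline{x}$, which requires $h(x_k)\to h(\overline{x})$. Since $\psi$ is continuous, I would invoke continuity of $f$ near $\overline{x}$, which is implicit since $f$ admits a desingularizer in the Clarke sense. The role of the desingularizer, namely the lower bound $|\nabla h|\geq1$, is precisely to rule out $v=0$ in the bounded sub-case, so that the normalization passes to the limit.
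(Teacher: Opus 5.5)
Your proof is correct and follows essentially the same route as the paper. Both pass to $h=\psi\circ f$ via \cref{lemma:desingularizer}, take $(x_k,u_k)\to(\overline{x},u)$ in $\gph\widetilde\nabla h$, and split into the zero case, the bounded-gradient case (where $|\nabla h(x_k)|\geq 1$ keeps the limit $v\in\partial h(\overline{x})$ nonzero so that $u=v/|v|$), and the unbounded case (giving $u\in\partial^{\infty} h(\overline{x})$). The requirement $h(x_k)\to h(\overline{x})$ that you flag is left implicit in the paper's appeal to outer semicontinuity of $\partial g$, so making it explicit is a refinement rather than a departure.
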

\begin{proof}
By \cref{lemma:desingularizer}, $\widehat\nabla f = \widehat\nabla g$ where $g=\psi \circ f$. Let $u\in \widehat\nabla f(\overline{x}) = \widehat\nabla g(\overline{x}) = (\cl \widetilde\nabla g)(\overline{x})$. There exists $\gph \widetilde\nabla g \ni (x_k,u_k)\to(\overline{x},u)$. First, if $g$ is eventually constant near $x_k$, then $u_k = 0$ and $u=0$. Second, suppose that eventually $g$ is differentiable at $x_k$ and $\nabla g(x_k)\neq 0$.
Then $\nabla g(x_k) \in \partial g(x_k)$ and $u_k = \nabla g(x_k)/|\nabla g(x_k)|$. If $\nabla g(x_k)$ is bounded, then $\nabla g(x_k)\to v \in \partial g(\overline{x})$ up to a subsequence, by outer semi-continuity of $\partial g$. Since $|\nabla g(x_k)|\geq 1$, $|v|\geq 1$. Hence $u = v/|v|$ and $P_{T_{\overline{x}}X}(u) = P_{T_{\overline{x}}X}(v/|v|) = P_{T_{\overline{x}}X}(v)/|v| = \nabla_X f(\overline{x})/|v| = 0$. Otherwise, up to a subsequence, $\nabla g(x_k)\to \infty$. Then $u\in \partial\p \infty g(\overline{x})$ and so $P_{T_{\overline{x}} X}(u)=0$. Third, if neither of the first or second cases holds, then $u_k\in \widetilde\nabla g(x_k) = \emptyset$ eventually holds, which is impossible.
\end{proof}

The following result allows us to make use of the Kurdyka-\L{}ojasiewicz inequality.

\begin{lemma}
\label{lemma:absolute}
    A function $f:\R\p n\to\R$ satisfies the normalized projection formula at $\overline{x}$ along $X$ iff $|f-f(\overline{x})|$ satisfies the normalized projection formula at $\overline{x}$ along $X$.
\end{lemma}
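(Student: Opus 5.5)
The plan is to compare $\widetilde\nabla f$ and $\widetilde\nabla h$ directly at every point near $\overline{x}$, where $h=|f-f(\overline{x})|$, and then pass to closures. Since the normalized projection formula only involves $\widehat\nabla(\cdot)(\overline{x})$ and the manifold $X\subseteq[f=f(\overline{x})]=[h=0]$, it suffices to show
$$P_{T_{\overline{x}}X}\widehat\nabla f(\overline{x})\subseteq\{0\}\iff P_{T_{\overline{x}}X}\widehat\nabla h(\overline{x})\subseteq\{0\}.$$
We may also assume $f(\overline{x})=0$ after replacing $f$ by $f-f(\overline{x})$, which does not change $\widetilde\nabla f$. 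We also use that $h|_X\equiv 0$ is $C^1$, so the hypothesis of \cref{def:normalized_projection_formulae} is met for $h$ whenever it is met for $f$.

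The key pointwise claim is that $\widetilde\nabla h(x)=\sign(f(x))\,\widetilde\nabla f(x)$ whenever $f(x)\neq 0$. When $f(x)=0$, we claim instead that $\widetilde\nabla f(x)$ and $\widetilde\nabla h(x)$ are each either $\{0\}$ or $\pm$ a common unit vector, or empty.

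For the first case, take $f(x)\neq 0$. By continuity $h=\pm f$ near $x$, so $h$ is constant near $x$ iff $f$ is. Likewise $h$ is differentiable with $\nabla h(x)\neq0$ iff $f$ is, and then $\nabla h(x)=\sign(f(x))\nabla f(x)$.

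For the second case, take $f(x)=0$. Then $x$ is a minimizer of $h$. If $h$ is differentiable there, its gradient is $0$, so $\widetilde\nabla h(x)$ is $\{0\}$ (if $h$ is constant nearby) or $\emptyset$.

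On the $f$ side, suppose $f$ is differentiable at $x$ with $\nabla f(x)\neq 0$ and $f(x)=0$. Then $x$ is a point of $[f=0]$ where $f$ changes sign, so $\widetilde\nabla f(x)=\{\pm u\}$ for some unit $u$. The danger is that this vector may appear in $\widetilde\nabla f$ but not in $\widetilde\nabla h$, since $h$ is nondifferentiable there. The same issue arises in the reverse direction.

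To handle it, we use that such points $x$ lie in $[f=0]$, which has empty interior near them since $\nabla f(x)\neq 0$. Any such $\nabla f(x)/|\nabla f(x)|$ is nonetheless a limit of $\widetilde\nabla f(y_k)$ with $f(y_k)\neq 0$. Indeed, along $y_k=x+t_k\nabla f(x)$ we do not need differentiability at $y_k$. Instead we use that the points of $\Omega\cap[f\neq0]$ near $x$, where $\Omega$ denotes the differentiable points, have gradients whose closure contains the relevant direction.

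This density step is where I expect the main obstacle. Without a Lipschitz or semi-algebraic hypothesis it may fail outright, since differentiability at a single point gives no control on nearby gradients. The first thing I would try is to avoid it entirely by working with closures. The set $\gph\widetilde\nabla f$ restricted to $[f\neq0]$ coincides with $\gph\widetilde\nabla h$ restricted to $[f\neq0]$ up to the sign $\pm$. On $[f=0]$, the only contributions are $\{0\}$ (where both $f$ and $h$ are locally constant, hence agree) and isolated gradient points.

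The plan is then to show that the projection condition is insensitive to these contributions. The key point is that $P_{T_{\overline{x}}X}$ is linear, so sign flips do not affect whether a projection is $0$. Hence
$$P_{T_{\overline{x}}X}\bigl(\cl(\pm\widetilde\nabla f)\bigr)(\overline{x})\subseteq\{0\}\iff P_{T_{\overline{x}}X}\bigl(\cl\widetilde\nabla f\bigr)(\overline{x})\subseteq\{0\}.$$

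For the residual points on $[f=0]$ with $\nabla f(x)\neq0$, I would argue as follows, up to a subsequence. If $x_k\to\overline{x}$ with $u_k=\nabla f(x_k)/|\nabla f(x_k)|$, then $u_k$ is orthogonal to the tangent of $[f=0]$ at $x_k$, namely $\ker\nabla f(x_k)^T$. I expect to conclude via this normality together with the limit. Failing that, I would verify in the paper's proof whether an implicit local Lipschitz hypothesis is being used.
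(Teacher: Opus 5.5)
Your reduction is the one the paper uses: compare $\widetilde\nabla f$ and $\widetilde\nabla|f|$ pointwise, then pass to closures, with the signs harmless because $P_{T_{\overline{x}}X}$ is linear. It does prove the implication from $f$ to $|f|$, provided $f$ is continuous. You use continuity in ``$h=\pm f$ near $x$'', but it is not in the statement. Contrary to your remark, nothing goes wrong in that direction: by your own second case, $\widetilde\nabla|f|$ only takes the values $\{0\}$ and $\emptyset$ on $[f=0]$.

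The genuine gap is the converse, at points $x_k\to\overline{x}$ with $f(x_k)=0$, $f$ differentiable at $x_k$ and $\nabla f(x_k)\neq 0$. These put $u_k=\nabla f(x_k)/|\nabla f(x_k)|$ into $\widetilde\nabla f$ and nothing into $\widetilde\nabla|f|$. None of your remedies disposes of them:
\begin{itemize}
\item Passing to closures does not remove them.
\item The orthogonality of $u_k$ to $\nabla f(x_k)^\perp$ is a tautology. It yields $P_{T_{\overline{x}}X}(\lim u_k)=0$ only when $x_k\in X$: then $\nabla f(x_k)\perp T_{x_k}X$ because $f|_X$ is constant, and $T_{x_k}X\to T_{\overline{x}}X$. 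In general $x_k\in[f=0]\setminus X$, and nothing ties $\nabla f(x_k)$ to $X$.
\item Your density claim is exactly the missing statement, but it is only asserted. The sentence about gradients on $\Omega\cap[f\neq 0]$ restates it rather than proving it.
\end{itemize}

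Nor can the density claim be proved from the stated hypotheses. Take $n=2$, $X=\R\times\{0\}$, $\overline{x}=0$ and $S=X\cup\{(0,1/k)\}_{k\geq 1}$. Let $D$ equal $1,2,3$ on three disjoint dense sets, and put $f(y)=y_1y_2+d(y,S)^2D(y)$.
\begin{itemize}
\item Since $|f(y)-y_1y_2|\leq 3|y-(0,1/k)|^2$, $f$ is differentiable at $(0,1/k)$ with gradient $(1/k,0)$. Hence $(1,0)\in\widehat\nabla f(0)$, and $f$ violates the normalized projection formula.
\item $|f|$ is discontinuous at every point off $S$ and minimal on $S$. So $\widetilde\nabla|f|\equiv\emptyset$, and $|f|$ satisfies the formula vacuously.
\item Replacing $D$ by a bounded continuous nowhere differentiable function, and $d(\cdot,S)^2$ by a smooth positive minorant of it off $S$, gives a continuous counterexample.
\end{itemize}

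Checking the paper will not help either. Its formula for $\widetilde\nabla f$ in the ($\Longleftarrow$) part assigns $\emptyset$ to exactly these points, which is already wrong for $f(y)=y$ at $0$. Its ``Indeed'' only shows that $|f|$ is not differentiable there.

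Where the lemma is actually used, $f$ is locally Lipschitz semi-algebraic (\cref{lemma:constant_trajectory}). There your density claim is true, via a blow-up at $x=x_k$:
\begin{enumerate}
\item For generic $z$, the ray $x+tz$ stays, for small $t>0$, in the open set where $f$ is $C^1$.
\item The bounded semi-algebraic curve $t\mapsto\nabla f(x+tz)$ then has a limit $G(z)$.
\item Integrate along lines and apply dominated convergence to $t^{-1}(f(x+tz)-f(x))\to\langle\nabla f(x),z\rangle$. This gives $G=\nabla f(x)$ almost everywhere.
\item Pick such a $z$ with $\langle\nabla f(x),z\rangle>0$. This yields points $y$ arbitrarily close to $x$ with $f>0$ near $y$ and $\widetilde\nabla|f|(y)$ arbitrarily close to $u_k$.
\item A diagonal argument places $\lim u_k$ in $\widehat\nabla|f|(\overline{x})$, so its tangential projection vanishes.
\end{enumerate}
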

\begin{proof}
Without loss of generality, assume that $f(\overline{x}) = 0$.
    
($\Longrightarrow$) This is due to the fact that for all $x\in\R\p n$,
    $$\widetilde \nabla |f|(x) =\left\{ \begin{array}{cl}
    \sign(f(x))\widetilde\nabla f(x) & \text{if}~ |f|~\text{is differentiable at}~ x ~\text{and}~ \nabla |f|(x) \neq 0, \\
    \{0\} & \text{if}~ f ~\text{is constant near}~ x, \\
    \emptyset & \text{else}.
\end{array}\right.$$
Indeed, suppose $|f|$ is differentiable at $x$ and $\nabla |f|(x)\neq 0$. If $f(x) \neq 0$, then $f$ is differentiable at $x$ and $\nabla f(x) \neq 0$, and $\nabla|f|(x) = \sign(f(x))\nabla f(x)$. If $f(x)=0$, then $x$ is a local minimum of $|f|$ and so $\nabla|f|(x)=0$, a contradiction.

($\Longleftarrow$) This is due to the fact that for all $x\in\R\p n$,
$$\widetilde \nabla f(x) =\left\{ \begin{array}{cl}
    \sign(f(x))\widetilde\nabla |f|(x) & \text{if}~f~\text{is differentiable at}~ x,~ \nabla f(x) \neq 0, ~\text{and}~ f(x)\neq 0, \\
    \{0\} & \text{if}~ f ~\text{is constant near}~ x, \\
    \emptyset & \text{else}.
\end{array}\right.$$
Indeed, suppose $f$ is differentiable at $x$, $\nabla f(x) \neq 0$, and $f(x) = 0$. If $|f|$ is differentiable at $x$, then $\nabla|f|(x) = 0$ since $x$ is a local minimum of $|f|$. It follows that $\nabla f(x) = 0$, a contradiction.
\end{proof}

The next result is very similar to \cite[Lemma 5.2]{drusvyatskiy2015curves}. We state it and prove it for the reader's convenience.

\begin{lemma}
    \label{lemma:tangent}
    Let $x:[a,b]\to \R\p n$ be differentiable almost everywhere and $\cX$ be a countable family of embedded submanifolds of $\R\p n$. Then $$\forae t \in [a,b],~\forall X\in \cX,~~~ x(t) \in X \implies x'(t)\in T_{x(t)}X.$$
\end{lemma}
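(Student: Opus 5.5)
Since $\cX$ is countable and a countable union of null sets is null, it suffices to fix a single embedded submanifold $X\in\cX$. We then show that the set
$$E_X=\{t\in[a,b]: x(t)\in X,~ x~\text{is differentiable at}~t,~ x'(t)\notin T_{x(t)}X\}$$
is Lebesgue null. The exceptional set in the statement is contained in the union over $X\in\cX$ of the sets $E_X$ together with the null set of points where $x$ fails to be differentiable. It is therefore null.

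Fix $X$ and let $T=\{t\in[a,b]: x(t)\in X\}$. The key observation is that every point of $E_X$ is an isolated point of $T$. Let $t\in E_X$. Since $X$ is embedded, there are a neighborhood $U$ of $x(t)$ and a $C^1$ map $\Phi:U\to\R^{n-d}$ with surjective differential such that $X\cap U=\Phi^{-1}(0)$ and $\ker d\Phi_{x(t)}=T_{x(t)}X$. Differentiability of $x$ at $t$ gives, for $s$ near $t$,
$$\Phi(x(s))=d\Phi_{x(t)}x'(t)\,(s-t)+o(|s-t|).$$
Here $d\Phi_{x(t)}x'(t)\neq 0$ because $x'(t)\notin T_{x(t)}X$. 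Hence $\Phi(x(s))\neq 0$ for all $s\neq t$ sufficiently close to $t$. Continuity of $x$ at $t$ ensures $x(s)\in U$ for such $s$, so $x(s)\notin X$. Thus $t$ is isolated in $T$.

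It remains to note that the set of isolated points of any subset of $\R$ is countable: each such point can be assigned a distinct rational interval isolating it. So $E_X$ is countable, hence null. The only potential subtlety is that $x$ is merely differentiable almost everywhere and not necessarily continuous. We only need continuity at $t$, and this follows from differentiability at $t$, which holds by the definition of $E_X$. No other step appears to be an obstacle.
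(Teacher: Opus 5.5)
Your proof is correct and follows essentially the paper's route. Both arguments show that the bad times are isolated points of $\{t\in[a,b] : x(t)\in X\}$, hence countable, and then take a countable union over $\cX$. The only difference is that you establish isolation through a local defining submersion $\Phi$. The paper instead argues directly at accumulation points, where difference quotients converge into the tangent cone, which equals $T_{x(t)}X$ for an embedded submanifold.
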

\begin{proof}
    Let $\Phi:\cX\rightrightarrows \R$ be defined by $\Phi(X) = \{ t \in [a,b] : x(t) \in X\}$. Recall that a discrete subset of a seperable metric space is countable \cite[Section 30, Example 2]{munkres2000topology}. Thus the set of isolated points of $\Phi(X)$ is countable for each $X \in \cX$. Since $\cX$ is countable, the set of all such isolated points is countable. Hence for almost every $t\in[a,b]$, $x(\cdot)$ is differentiable at $t$ and, for all $X\in \cX$, if $x(t)\in X$, then $t$ is accumulation point of $\Phi(X)$. In that case, there is $t_k\in \Phi(X)\setminus \{t\}$ such that $t_k\to t$, so $(x(t_k)-x(t))/(t_k-t)\to x'(t)\in T_{x(t)}X$ using \cite[Example 6.8]{rockafellar2009variational}.
\end{proof}

When dealing with continuous-time dynamics, we must take the convex hull of the normalized subdifferential in order to guarantee the existence of trajectories. It thus behooves us to find a normalized projection formula for the convexified mapping.

\begin{lemma}
\label{lemma:projection_normal}
    Let $f:\R\p n\to \R$ be a locally Lipschitz function such that the projection formula holds at $\overline{x}$ along $X$. If $0\notin\overline{\nabla} f(\overline{x})$, then $P_{T_{\overline{x}}X}(\co\widehat\nabla f(\overline{x})) \subseteq [|\overline{\nabla} f(\overline{x})|_+\p{-1},|\overline{\nabla} f(\overline{x})|_-\p{-1}]\nabla_X f(\overline{x})$.    
\end{lemma}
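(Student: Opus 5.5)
By \cref{lemma:normal_Bouligand}, since $f$ is Lipschitz near $\overline{x}$ and $0\notin\overline{\nabla} f(\overline{x})$, every element of $\widehat\nabla f(\overline{x})$ has the form $v/|v|$ with $v\in\overline{\nabla} f(\overline{x})$. The plan is to project each such generator onto $T_{\overline{x}}X$ using the projection formula, and then pass to convex combinations.

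\textbf{Step 1: projecting a single generator.} Recall that $\overline{\nabla} f(\overline{x})\subseteq\partial f(\overline{x})$: each Bouligand limit $\lim \nabla f(x_k)$ is a limit of regular subgradients $\nabla f(x_k)\in\widehat\partial f(x_k)$ with $f(x_k)\to f(\overline{x})$ by continuity. The projection formula at $\overline{x}$ along $X$ then gives $P_{T_{\overline{x}}X}(v)=\nabla_X f(\overline{x})$ for every $v\in\overline{\nabla} f(\overline{x})$. Because the projection is linear,
$$P_{T_{\overline{x}}X}(v/|v|)=|v|^{-1}\nabla_X f(\overline{x}).$$
The set $\overline{\nabla} f(\overline{x})$ is compact (by local boundedness and closedness) and does not contain $0$. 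Hence $0<|\overline{\nabla} f(\overline{x})|_-\leq|v|\leq|\overline{\nabla} f(\overline{x})|_+<\infty$, so the scalar $|v|^{-1}$ lies in the interval $[|\overline{\nabla} f(\overline{x})|_+^{-1},|\overline{\nabla} f(\overline{x})|_-^{-1}]$.

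\textbf{Step 2: convex combinations.} Take $u\in\co\widehat\nabla f(\overline{x})$ and write $u=\sum_i\lambda_i v_i/|v_i|$ with $\lambda$ in the simplex and $v_i\in\overline{\nabla} f(\overline{x})$. Linearity of the projection gives
$$P_{T_{\overline{x}}X}(u)=\Big(\sum_i\lambda_i|v_i|^{-1}\Big)\nabla_X f(\overline{x}).$$
The coefficient $\sum_i\lambda_i|v_i|^{-1}$ is a convex combination of numbers in the interval from Step 1, so it stays in that interval. This proves the inclusion.

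The argument is straightforward; the only point needing care is the inclusion $\overline{\nabla} f\subseteq\partial f$, which justifies invoking the projection formula for $\partial f$. The horizon part of the formula is not needed, because $f$ is locally Lipschitz.
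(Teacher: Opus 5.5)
Your proposal is correct and follows essentially the same route as the paper: you write $u\in\co\widehat\nabla f(\overline{x})$ as $\sum_i\lambda_i v_i/|v_i|$ with $v_i\in\overline{\nabla} f(\overline{x})$ via \cref{lemma:normal_Bouligand}, apply the projection formula and linearity of $P_{T_{\overline{x}}X}$, and bound the coefficient $\sum_i\lambda_i/|v_i|$ between $|\overline{\nabla} f(\overline{x})|_+^{-1}$ and $|\overline{\nabla} f(\overline{x})|_-^{-1}$. You also spell out two points the paper leaves implicit: the inclusion $\overline{\nabla} f(\overline{x})\subseteq\partial f(\overline{x})$ and the positivity of $|\overline{\nabla} f(\overline{x})|_-$.
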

\begin{proof}
    Let $u \in \co\widehat\nabla f(\overline{x})$. By \cref{lemma:normal_Bouligand}, there exist $\lambda_1,\hdots,\lambda_p\geq 0$ and $v_1,\hdots,v_p\in \overline{\nabla}f(\overline{x})$ such that $u=\sum_i \lambda_i v_i/|v_i|$ and $\lambda_1+\cdots+\lambda_p = 1$. Since $P_{T_{\overline{x}}X}(v_i) = \{\nabla_X f(\overline{x})\}$, we have $P_{T_{\overline{x}}X}(u) = \sum_i \lambda_i P_{T_{\overline{x}}X}(v_i)/|v_i| = \{\gamma\nabla_X f(\overline{x})\}$ where $\gamma = \sum_i \lambda_i/|v_i| \leq \sum_i \lambda_i/|\overline{\nabla} f(\overline{x})|_- = 1/|\overline{\nabla} f(\overline{x})|_-$ and likewise for $|\overline{\nabla} f(\overline{x})|_+$.
\end{proof}

We now ready to prove the key lemma of this section, which will actually also be useful in a later section when we consider conserved quantities.

\begin{lemma}
    \label{lemma:reparametrization}
    Let $f:\mathbb{R}^n \rightarrow \mathbb{R}$ be locally Lipschitz semi-algebraic and $x:[a,b]\rightarrow \R\p n$ be a curve such that $0 \notin \overline{\partial} f(x(t))$ for all $x\in [a,b]$. Then $x(\cdot)$ is a trajectory of $-\co\widehat\nabla f$ iff it is a trajectory of $-\overline{\partial} f$ up to reparametrization.
\end{lemma}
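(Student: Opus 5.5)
The plan is to fix a semi-algebraic $C\p 1$ variational Whitney stratification $\cX$ of $f$ \cite[Lemma 8]{bolte2007clarke}. On it the projection formula holds everywhere, so $P_{T_xX}\overline{\partial} f(x)=\{\nabla_X f(x)\}$ for $x\in X\in\cX$. The convex-hull part follows from the projection formula for $\partial f$ together with $\partial\p\infty f=\{0\}$, since $f$ is locally Lipschitz.

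By \cref{lemma:tangent}, for any absolutely continuous curve $x(\cdot)$ and almost every $t$, we have $x'(t)\in T_{x(t)}X$, where $X$ is the stratum containing $x(t)$. Hence $x'(t)=P_{T_{x(t)}X}(x'(t))$. Both directions should then reduce to the single identity, valid almost everywhere,
$$x'(t) = -\gamma(t)\,\nabla_X f(x(t)),$$
with a positive scalar $\gamma(t)$ bounded away from $0$ and $\infty$. The two directions are then linked by a time change.

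\emph{($\Longrightarrow$)} Let $x(\cdot)$ be a trajectory of $-\co\widehat\nabla f$. Since $0\notin\overline{\partial} f(x(t))\supseteq\overline{\nabla}f(x(t))$, \cref{lemma:projection_normal} applies at $x(t)$. Projecting the inclusion onto $T_{x(t)}X$ gives $x'(t)=-\gamma(t)\nabla_Xf(x(t))$ with
$$\gamma(t)\in\left[|\overline{\nabla} f(x(t))|_+\p{-1},\,|\overline{\nabla} f(x(t))|_-\p{-1}\right].$$
The upper endpoint is finite because $\overline{\partial} f$ is upper semicontinuous with compact values and $x([a,b])$ is compact, so $d(0,\overline{\partial} f(x(t)))\geq c>0$ uniformly in $t$. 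The lower endpoint is at least $1/L$, where $L$ is a Lipschitz constant of $f$ near $x([a,b])$. We also need $\gamma$ to be measurable; I plan to get this by writing $\gamma(t)=|x'(t)|/|\nabla_Xf(x(t))|$ where the denominator is nonzero, and $\gamma=1$ otherwise. Set $\tau(t)=\int_a\p t\gamma$, which is bi-Lipschitz, and $y=x\circ\tau\p{-1}$. Then $y'(s)=-\nabla_X f(y(s))$ for almost every $s$.

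\emph{($\Longleftarrow$)} Let $y(\cdot)$ be a trajectory of $-\overline{\partial} f$. By \cref{lemma:tangent} and the projection formula, $y'(s)=P_T y'(s)=-\nabla_X f(y(s))$ almost everywhere. Reparametrizing with the inverse speed factor $\gamma(t)=1/|\overline\nabla f(x(t))|$ taken along some selection should give $x'=-\gamma\nabla_Xf$. It then remains to exhibit an element of $\co\widehat\nabla f(x(t))$ whose tangential projection is exactly this vector and whose normal component vanishes.

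\emph{Main obstacle.} The crux, in both directions, is the reverse membership statement. For $(\Longrightarrow)$ we need $\nabla_Xf(x)\in\overline{\partial} f(x)$, not merely that it is the common projection of $\overline{\partial} f(x)$. For $(\Longleftarrow)$ we need $\nabla_X f(x)/|\cdot|$-type tangent vectors to lie in $\co\widehat\nabla f(x)$ (and not only in its projection), at almost every point visited by the curve.

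My first attempt would use the chain rule along the curve, $(f\circ x)'(t)=\langle v,x'(t)\rangle$ for all $v\in\overline{\partial} f(x(t))$ \cite[Corollary 5.4]{drusvyatskiy2015curves}, combined with the Drusvyatskiy--Ioffe--Lewis fact that trajectories of $-\overline{\partial} f$ move by the minimal-norm element. One would then show that this minimal-norm element equals $\nabla_Xf(x)$ for strata of the stratification, after refining the stratification if necessary: for instance, make it compatible with the semi-algebraic set where $P_{\overline{\partial} f(x)}(0)\in T_xX$ and use the Whitney-(a) condition to pass limits of gradients from adjacent top-dimensional strata. Failing that, I would restrict to times $t$ where $x(t)$ stays in a stratum on a set of positive density. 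At such times the Lebesgue differentiation argument in \cref{lemma:tangent} should force the required vector to be a limit of normalized gradients $\nabla f(x_k)/|\nabla f(x_k)|$ with $x_k\to x(t)$, which is the definition of $\widehat\nabla f(x(t))$.
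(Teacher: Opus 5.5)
Your framework is the paper's: a variational stratification with the projection formula, \cref{lemma:tangent}, \cref{lemma:projection_normal}, and a bi-Lipschitz time change. You have also located the crux correctly. But you leave it unresolved: in both directions the argument rests on a membership claim for which you only sketch speculative routes (minimal-norm elements, refining the stratification, Whitney-(a), density arguments).

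Moreover, the pointwise claim you aim for in $(\Longrightarrow)$, namely $\nabla_X f(x)\in\overline{\partial} f(x)$, is false in general, so no choice or refinement of the stratification can deliver it. Take $f(x_1,x_2)=\max\{x_1,2x_1\}$. Then $0\notin\overline{\partial} f$ everywhere, so the lemma's hypothesis holds. Yet at $x=(0,x_2)$ the projection formula gives
$$\nabla_X f(x)=P_{T_xX}(1,0)=P_{T_xX}(2,0)=2P_{T_xX}(1,0),$$
so $\nabla_X f(x)=0\notin[1,2]\times\{0\}=\overline{\partial} f(x)$, whatever stratum $X$ contains $x$. The membership therefore cannot come from the stratification. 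It has to be read off from the velocity, at the almost every $t$ where $x'(t)\in T_{x(t)}X$.

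The missing step is a short rescaling.

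\emph{Direction $(\Longrightarrow)$.} At such a time, write $-x'(t)=\sum_i\lambda_i v_i/|v_i|$ with $v_i\in\overline{\nabla} f(x(t))$, by \cref{lemma:normal_Bouligand}. Since $x'(t)$ is itself tangent, projecting gives $-x'(t)=\gamma\,\nabla_X f(x(t))$ with $\gamma=\sum_i\lambda_i/|v_i|$. Hence $\nabla_X f(x(t))=\sum_i\frac{\lambda_i}{\gamma|v_i|}v_i$, a convex combination, so it lies in $\co\overline{\nabla} f(x(t))=\overline{\partial} f(x(t))$.

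\emph{Direction $(\Longleftarrow)$.} Tangency and the projection formula give $x'(t)=-\nabla_X f(x(t))$. Write $-x'(t)=\sum_i\mu_i v_i$ with $v_i\in\overline{\nabla} f(x(t))$, and set $s=\sum_i\mu_i|v_i|\in[c,L]$. Here $c=\min_t d(0,\overline{\partial} f(x(t)))>0$ and $L$ is a local Lipschitz constant. Then $-x'(t)/s=\sum_i\frac{\mu_i|v_i|}{s}\frac{v_i}{|v_i|}\in\co\widehat\nabla f(x(t))$. So the right time change is $\psi(t)=\int_a^t s$, where $s(\cdot)$ is a measurable selection of the nonempty closed-valued map $t\mapsto\{\sigma\in[c,L]: x'(t)/\sigma\in-\co\widehat\nabla f(x(t))\}$. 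An inverse-speed factor attached to an arbitrary selection does not work.

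Your diagnosis is in fact sharper than the paper's text. In $(\Longrightarrow)$ the paper asserts $-\xi(x(t))\in-\overline{\partial} f(x(t))$ without comment. In $(\Longleftarrow)$ it applies \cref{lemma:tangent} to an arbitrary measurable selection of $-\co\widehat\nabla f\circ x$, although that lemma only concerns velocities. The computation above is what closes both directions, and your proposal needs it to be a proof.
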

\begin{proof}
    Since $f$ is locally Lipschitz semi-algebraic, by \cite[Theorems 3.6, 3.30]{davis2025active} it admits a variational Verdier stratification $\cX$. Let $\Phi:\R\p n \rightrightarrows \R\p n$ be defined by $\Phi(x) = T_xX$ where $X$ is the stratum $X\in \cX$ containing $x$. Also, let $\xi:\R\p n\to\R\p n$ be defined by $\xi(x) =\nabla_X f(x)$ where $X$ is the stratum containing $x$. It is measurable since it is Borel measurable. Indeed, $\gph \xi$ is Borel measurable as a countable union of smooth submanifolds of $\R\p {2n}$. 
    
    ($\Longrightarrow$) Suppose $x(\cdot)$ is a trajectory of $-\co\widehat\nabla f$. For almost every $t\in[a,b]$, $x'(t)\in -\co \widehat{\nabla} f(x(t))$ and $x'(t) \in \Phi(x(t))$ by \cref{lemma:tangent}. The assumption on $x(\cdot)$ implies that $0 \notin \overline\nabla f(x(t))$ for all $x\in [a,b]$. By \cref{lemma:projection_normal}, there exists $\gamma:[a,b]\to \R\p n$ such that $|\overline{\nabla}f(x(t))|_+\p{-1}\leq \gamma(t)\leq |\overline{\nabla}f(x(t))|_-\p{-1}$ and a null set $N\subseteq \R$ such that
    $$\forall t\in [a,b]\setminus N, ~~~ x'(t) = P_{\Phi(x(t))}(x'(t)) = -\gamma(t)\xi(x(t)).$$
    Let $c = \inf \{|\xi(x(t))|:t\in[a,b]\setminus N\}$ and suppose $c=0$. Then there exists $t_k\in[a,b]\setminus N$ such that $|x'(t_k)|\leq \gamma(t_k)|\xi(x(t_k))|\to 0$. Up to a subsequence, $t_k\to\overline t\in[a,b]$ and so $0 \in \co\widehat\nabla f(x(\overline{t}))$ by \cref{prop:normal_nonempty}. Thus $0 \in \overline{\partial} f(x(\overline{t}))$ by \cref{prop:critical}, a contradiction.

    Since $\gamma(t) = |x'(t)|/|\xi(x(t))|$ for all $t\in[a,b]\setminus N$, $\gamma$ is measurable. It is also integrable as $0\leq \gamma(t)\leq \max\{ |\overline{\nabla} f(x(s))|_-\p{-1}:s\in[a,b]\}$ for all $t\in[a,b]$. Consider the change of variables $\varphi:[a,b]\to [0,T]$ defined by $\varphi(t) = \int_a\p t \gamma(s)ds$ where $T = \int_a\p b \gamma(s)ds\in(0,\infty)$. Since it is an increasing homeomorphism, we can define $\overline{x} = x\circ\varphi\p{-1}$. For all $t\in[a,b]\setminus N$, with $\overline{t} = \varphi(t)$ we have $\overline{x}'(\overline{t}) = (\varphi\p{-1})'(\overline{t})x'(\varphi\p{-1}(\overline{t})) = \varphi'(t)\p{-1}x'(t) = -\gamma(t)\p{-1}x'(t) = - \xi(x(t)) = -\xi(\overline{x}(\overline{t}))\in - \overline{\partial} f(\overline{x}(\overline{t}))$.

    ($\Longleftarrow$) Suppose $x(\cdot)$ is a trajectory of $-\overline\partial f$. As $\widehat\nabla f$ has a closed graph, it is outer semicontinuous \cite[Theorem 5.7]{rockafellar2009variational}. Together with the fact that it is has closed values, it must be measurable \cite[Exercise 14.9]{rockafellar2009variational}. Measurability is preserved by taking the convex hull \cite[Exercise 14.12]{rockafellar2009variational}, so $\co\widehat\nabla f$ is measurable. Consequently, $-\co \widehat{\nabla} f \circ x$ is measurable. Since it also has closed values by the Carathéodory theorem, it admits a measurable selection $y:[a,b]\to\R\p n$ by \cite[Corollary 14.6]{rockafellar2009variational}. On the other hand, for almost every $t\in[a,b]$, $y(t)\in \Phi(x(t))$ by \cref{lemma:tangent}. By \cref{lemma:projection_normal}, there exists $\gamma:[a,b]\to \R\p n$ such that $|\overline{\nabla}f(x(t))|_+\p{-1}\leq \gamma(t)\leq |\overline{\nabla}f(x(t))|_-\p{-1}$ and
    $$\forae t\in[a,b], ~~~ y(t) = P_{\Phi(x(t))}(y(t)) = -\gamma(t)\xi(x(t)).$$  
    
    Since $1/\gamma(t) = |\xi(x(t))|/|y(t)|$, $1/\gamma$ is measurable. It is also integrable as $0\leq 1/\gamma(t)\leq \max\{ |\overline{\nabla} f(x(s))|_+:s\in[a,b]\}$ for all $t\in[a,b]$. Consider the change of variables $\psi:[a,b]\to [0,T]$ defined by $\psi(t) = \int_a\p t ds/\gamma(s)$ where $T = \int_a\p b ds/\gamma(s)\in(0,\infty)$. It yields a new curve $z = x\circ \psi\p{-1}$. For almost every $t\in[a,b]$, with $s = \psi(t)$ we have $z'(s) = (\psi\p{-1})'(s)x'(\psi\p{-1}(s)) = -\psi'(t)\p{-1}x'(t) = -\gamma(t)\xi(x(t)) = y(t) \in -\co\widehat\nabla f(x(t)) = - \co\widehat\nabla f(z(s))$.
\end{proof}

The above lemma is wrong if Clarke critical points are allowed, as shown by $f(x) = x\p 3$. We hence consider this case below (if $(f\circ x)(t)$ is constant, then $x(t)$ is Clarke critical by \cref{cor:descent}).

\begin{lemma}
    \label{lemma:constant_trajectory}
    Let $f:\R\p n\to\R$ be locally Lipschitz semi-algebraic and $x:[a,b]\to\R\p n$ be a trajectory of $-\co\widehat\nabla f$. If $f\circ x$ is constant, then so is $x$.
\end{lemma}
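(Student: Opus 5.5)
The plan is to show first that $x(t)$ is Clarke critical for every $t$. Then, at almost every $t$, $x'(t)$ is forced to lie both in a tangent space along which the normalized subdifferential has zero projection and in $-\co\widehat\nabla f(x(t))$, which gives $x'(t)=0$.

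\emph{Step 1 (criticality).} Since $f\circ x$ is constant, say equal to $c$, \cref{cor:descent} gives $0=(f\circ x)'(t)\le -d(0,\overline{\partial}f(x(t)))^2/L$ for almost every $t$. Hence $0\in\overline{\partial}f(x(t))$ for almost every $t$. The Clarke subdifferential of a locally Lipschitz function has a closed graph and $x$ is continuous, so $0\in\overline{\partial}f(x(t))$ for all $t\in[a,b]$. In particular $x([a,b])\subseteq[f=c]$.

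\emph{Step 2 (a normalized projection formula along $[f=c]$).} Set $h=|f-c|$. It is nonnegative, locally Lipschitz and semi-algebraic. By the semi-algebraic Morse--Sard theorem \cite[Corollary 9]{bolte2007clarke}, $c$ is the only Clarke critical value of $f$ in a small interval around $c$. By compactness of $x([a,b])$, we can therefore cover it with finitely many bounded open sets $U_1,\hdots,U_m$ on each of which $\widetilde f=h$ in \cref{def:desingularizer}. By the Kurdyka--\L{}ojasiewicz inequality there is a desingularizer $\psi_j$ of $f$ (equivalently of $h$) on $U_j$. The function $\psi_j\circ h$ is continuous and semi-algebraic, so it admits a semi-algebraic variational Whitney stratification compatible with $[f=c]$ \cite[Lemma 8]{bolte2007clarke}, and the projection formula holds along its strata. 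For strata $X\subseteq[f=c]\cap U_j$, \cref{lemma:normalized_projection_desingularizer} shows that $h$ satisfies the normalized projection formula along $X$. \cref{lemma:absolute} then transfers this to $f$: $P_{T_yX}\widehat\nabla f(y)\subseteq\{0\}$ for every $y\in X$. Collect all these strata over $j$ into a finite family $\cX$ covering $x([a,b])$. This step is where most care is needed. One must check that the hypotheses of \cref{lemma:normalized_projection_desingularizer} are met locally: the desingularizer only needs to work near the point, and the strata lie in $[h=0]$. One must also check that a merely continuous, not Lipschitz, $\psi_j\circ h$ still admits the needed stratification; it does, by lower semicontinuity.

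\emph{Step 3 (conclusion).} By \cref{lemma:tangent}, for almost every $t$ we have $x'(t)\in T_{x(t)}X$, where $X\in\cX$ is a stratum containing $x(t)$. We also have $x'(t)\in-\co\widehat\nabla f(x(t))$. Projection onto a subspace is linear, so $P_{T_{x(t)}X}(\co\widehat\nabla f(x(t)))=\co P_{T_{x(t)}X}\widehat\nabla f(x(t))\subseteq\{0\}$. Therefore $x'(t)=P_{T_{x(t)}X}x'(t)=0$ almost everywhere. Since $x$ is absolutely continuous, it is constant.
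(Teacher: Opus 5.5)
Your proof is correct and follows the paper's argument: normalize the level to $c$ and desingularize $|f-c|$ near $x([a,b])$. Then take a variational Whitney stratification of $\psi\circ|f-c|$ compatible with the level set, combine \cref{lemma:normalized_projection_desingularizer} and \cref{lemma:absolute} to kill the tangential part of $\co\widehat\nabla f$, and conclude with \cref{lemma:tangent}.

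The differences are cosmetic. The paper uses a single desingularizer of $|f|$ (after setting $c=0$) on a neighborhood of $x([a,b])$, and $0$ is automatically a Clarke critical value of $|f|$ there. So your Step 1 and the finite cover are unnecessary, though they do no harm.
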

\begin{proof}
    Without loss of generality, $f(x(a))=0$. Since $f$ is locally Lipschitz semi-algebraic, $|f|$ admits a desingularizer $\psi:\R_+\to\R_+$ near $x([a,b])$. Since $\psi\circ |f|$ is lower semicontinuous and semi-algebraic, by \cite[Lemma 8]{bolte2007clarke} it admits a variational Whitney stratification $\cX$ compatible with $[f=0]$, i.e., such that $[f=0]$ is a union of strata. By \cref{lemma:tangent}, for almost every $t \in[a,b]$, there exists $X \in \cX\cap[f=0]$ such that $x'(t)\in T_{x(t)}X$ and thus $$x'(t) = P_{T_{x(t)}X}(x'(t)) \in -P_{T_{x(t)}X}(\co\widehat\nabla f(x(t))) = -P_{T_{x(t)}X}(\co\widehat\nabla |f|(x(t))) = \{0\}$$ by \cref{lemma:normalized_projection_desingularizer} and \cref{lemma:absolute}. Since $x(\cdot)$ is absolutely continuous, $x(t)-x(a) = \int_a\p t x'(s)ds = 0$ for all $t\in[a,b]$.
\end{proof}

Normalized subdifferential dynamics satisfy the same length formula as Clarke subdifferential dynamics \cite[Proposition 7]{josz2023global}, even though they don't generate the same trajectories. 

\begin{proposition}
    \label{prop:length}
    Let $f:\mathbb{R}^n \rightarrow \mathbb{R}$ be locally Lipschitz semi-algebraic and $X$ be a bounded subset of $\mathbb{R}^n$. Assume that $f$ has at most $m\in \mathbb{N}^*$ critical values in $\overline{X}$. Let $\psi$ be a desingularizer of $f$ on $X$.
    If $x:[a,b]\rightarrow X$ is a trajectory of $-\co\widehat\nabla f$, then 
$$\frac{1}{2m}\int_a^b |x'(t)|dt \leq \psi\left(\frac{f(x(a))-f(x(b))}{2m}\right).$$
\end{proposition}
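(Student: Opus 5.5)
The plan is to mimic the proof of the length formula for Clarke subdifferential dynamics \cite[Proposition 7]{josz2023global}, using \cref{cor:descent}, \cref{lemma:constant_trajectory}, and the desingularizer. The basic observation is that trajectories of $-\co\widehat\nabla f$ have speed at most one, because $\widehat\nabla f$ takes values in the closed unit ball. Moreover, wherever the trajectory is not Clarke critical, the decrease of $\psi\circ\widetilde f$ controls the arclength.

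\textbf{Step 1: pointwise bound away from critical values.} Fix a time $t$ at which $x$ and $\psi\circ\widetilde f\circ x$ are differentiable, $x'(t)\in-\co\widehat\nabla f(x(t))$, and $0\notin\overline\partial \widetilde f(x(t))$. Since $\widetilde f=d(f,V)$ equals $\pm(f-v)$ locally for some $v\in V$, \cref{lemma:desingularizer} and \cref{lemma:absolute} give $\widehat\nabla(\psi\circ\widetilde f)=\pm\widehat\nabla f$ near $x(t)$, with sign $+$ on intervals where $f>v$. Apply the argument of \cref{prop:normal_assumption} to $h=\psi\circ\widetilde f$ at $x(t)$, with the chain rule \cite[Corollary 5.4]{drusvyatskiy2015curves}. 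This gives
$$|(h\circ x)'(t)|\ \ge\ d(0,\overline\partial h(x(t)))\cdot(\text{component of }x'(t)\text{ along the minimal-norm direction}).$$
Bounding this by $|x'(t)|$ is cleaner via the stratification. By \cref{lemma:tangent} and \cref{lemma:projection_normal}, $x'(t)=-\gamma(t)\nabla_X f(x(t))$ with $\gamma\ge0$. Hence $(f\circ x)'(t)=\langle\nabla_X f,x'\rangle=-|x'(t)||\nabla_X f(x(t))|$, and therefore $|(h\circ x)'(t)|=|\nabla_X h(x(t))|\,|x'(t)|$. The projection formula gives $|\nabla_X h|\ge d(0,\overline\partial h)\ge 1$ by \cref{def:desingularizer}, assuming $|\nabla_X h|$ equals the distance; if it is merely $\ge$, the inequality still holds in the right direction. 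So $|x'(t)|\le|(h\circ x)'(t)|$.

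\textbf{Step 2: splitting by critical values.} The set $\{t: f(x(t))\in V\}$ is handled separately. On any interval where $f\circ x$ is constant, $x$ is constant by \cref{lemma:constant_trajectory}. By \cref{cor:descent}, $f\circ x$ is decreasing, so it crosses each of the at most $m$ critical values on a (possibly degenerate) interval, on which $x'=0$ a.e. The complement in $[a,b]$ therefore splits into at most $m+1$ open intervals $I_j$. On each $I_j$, $f\circ x$ stays strictly between consecutive critical values, so at most two monotone pieces occur (above and below the midpoint, where the nearest critical value changes); this gives at most $2m$ pieces in total. On each piece $h\circ x$ is monotone, and Step 1 integrates to $\int_{I}|x'|\le\psi(\Delta_I)$, where $\Delta_I$ is the decrease of $\widetilde f$ on the piece, which is at most the decrease of $f$ there.

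\textbf{Step 3: concavity.} Sum over at most $2m$ pieces and use concavity of $\psi$ (Jensen, with $\psi(0)=0$, so that adding zero pieces is harmless):
$$\frac{1}{2m}\sum_I\psi(\Delta_I)\le\psi\Big(\frac{1}{2m}\sum_I\Delta_I\Big)\le\psi\Big(\frac{f(x(a))-f(x(b))}{2m}\Big),$$
using that $\psi$ is increasing and $\sum_I\Delta_I\le f(x(a))-f(x(b))$.

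I expect the main obstacle to be Step 1: identifying $\overline\partial h$ with the stratified gradient so that speed and decrease align exactly. Normalization breaks conservativity, so the bound must come from $x'$ being tangent and antiparallel to $\nabla_X f$, not from an inner product with arbitrary Clarke subgradients.
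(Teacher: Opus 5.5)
\textbf{There is a genuine gap in Step 1, at the inequality that carries the whole argument.} Your overall architecture is sound and differs from the paper's. The paper does not redo the length estimate: it uses \cref{lemma:constant_trajectory} to discard the critical levels, notes via \cref{lemma:reparametrization} that the rest of the curve is a reparametrized trajectory of $-\overline\partial f$ (so length and endpoint values are unchanged), and cites \cite[Proposition 7]{josz2023global}. Your skeleton re-derives that estimate directly: split at critical levels and midpoints, bound length on at most $2m$ monotone pieces, then apply Jensen.

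The failing step is where you invoke the projection formula. It says $P_{T_xX}w=\nabla_X h(x)$ for every $w\in\overline\partial h(x)$. Hence $|\nabla_X h(x)|\le |w|$, i.e.\ $|\nabla_X h(x)|\le d(0,\overline\partial h(x))$. This is the opposite of what you need, so $d(0,\overline\partial h)\ge 1$ gives no lower bound on $|\nabla_X h|$. Your fallback only covers a ``$\ge$'' that the formula does not provide. The inequality can also be strict on a stratum: for $f(x,y)=\max\{x,2x\}$ along $\{x=0\}$, one has $\nabla_X f=0$ while $\overline\partial f=[1,2]\times\{0\}$.

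\textbf{The missing idea is that, along the dynamics and for almost every $t$, $\nabla_X f(x(t))$ actually lies in $\overline\partial f(x(t))$.} Here is how to get it:
\begin{itemize}
\item By \cref{lemma:normal_Bouligand}, write $-x'(t)=\sum_i\lambda_i v_i/|v_i|$ with $v_i\in\overline\nabla f(x(t))$.
\item By the projection formula, $v_i=\nabla_X f(x(t))+n_i$ with $n_i\in N_{x(t)}X$.
\item Tangency $x'(t)\in T_{x(t)}X$ forces $\sum_i(\lambda_i/|v_i|)n_i=0$.
\item Hence $\nabla_X f(x(t))=\sum_i\mu_i v_i$ with $\mu_i=\lambda_i/(\gamma(t)|v_i|)$ and $\sum_i\mu_i=1$. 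So $\nabla_X f(x(t))\in\co\overline\nabla f(x(t))=\overline\partial f(x(t))$.
\item By the projection formula again, it is then the minimal-norm element, so $|\nabla_X f(x(t))|=d(0,\overline\partial f(x(t)))$.
\item Multiplying by $\psi'(\widetilde f(x(t)))$ gives $|\nabla_X h(x(t))|=d(0,\overline\partial h(x(t)))\ge 1$.
\end{itemize}
This is the fact implicitly behind the inclusion $-\xi(\overline{x}(\overline{t}))\in-\overline\partial f(\overline{x}(\overline{t}))$ at the end of the proof of \cref{lemma:reparametrization}.

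With this repair the rest goes through, apart from three small fixes:
\begin{itemize}
\item On pieces where $f\circ x$ lies below its nearest critical value, $\widetilde f\circ x$ increases. So $\Delta_I$ should be the variation of $\widetilde f$, which equals the decrease of $f$.
\item Passing from $\psi(\widetilde f(x(s)))-\psi(\widetilde f(x(s')))$ to $\psi(\Delta_I)$ uses subadditivity of the concave $\psi$ with $\psi(0)=0$.
\item The count $2m$ (rather than $2m+2$) holds because the two unbounded value ranges each contribute a single piece.
\end{itemize}
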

\begin{proof}
    This is due to \cref{lemma:reparametrization}, \cref{lemma:constant_trajectory}, and \cite[Proposition 7]{josz2023global}.
\end{proof}

We can now make the connection between stability (in continuous time) and d-stability for normalized subdifferential dynamics.

\begin{theorem}
    Let $f:\R\p n \to \R$ be locally Lipschitz semi-algebraic and $\overline{x}\in \R\p n$. The following hold:
    \begin{enumerate}[label=\rm{(\roman{*})}]
        \item $\overline{x}$ is stable for $-\co\widehat \nabla f$ iff it is asymptotically stable for $-\co\widehat \nabla f$ iff it is a local minimum of $f$. \label{item:stable}
        \item If $\overline{x}$ is d-stable for $-\widehat \nabla f$, then $\overline{x}$ is stable for $-\co\widehat \nabla f$. \label{item:d-stable}
    \end{enumerate}
\end{theorem}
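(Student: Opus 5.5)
For \ref{item:stable}, I will prove the chain local minimum $\Rightarrow$ asymptotically stable $\Rightarrow$ stable $\Rightarrow$ local minimum. The middle implication is by definition.

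\emph{Local minimum $\Rightarrow$ asymptotic stability.} Let $\overline{x}$ be a local minimum and set $f(\overline{x})=0$. By the semi-algebraic Morse--Sard theorem \cite[Corollary 9]{bolte2007clarke}, there are no Clarke critical values in $(0,\eta)$ for some $\eta>0$. Take $r>0$ with $f\geq 0$ on $\overline{B}_r(\overline{x})$, so that $f$ has at most two critical values on $B_r(\overline{x})$, and let $\psi$ be a desingularizer of $f$ there. For any trajectory of $-\co\widehat\nabla f$ staying in $B_r(\overline{x})$, \cref{cor:descent} makes $f\circ x$ decreasing, and \cref{prop:length} gives
$$\int |x'|\leq 4\psi\bigl(f(x(0))/4\bigr).$$
By continuity of $f$ and $\psi(0)=0$, this length is below $r/2$ once $x(0)$ is close enough to $\overline{x}$. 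The usual first-exit argument then shows the trajectory never leaves $B_r(\overline{x})$, which gives stability. Bounded maximal solutions are globally defined, and finite length yields convergence to some $x^*$.

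It remains to show $x^*$ is Clarke critical, so that $f(x^*)=0$ and hence $f(x(t))\to 0$. If $0\notin\overline{\partial}f(x^*)$, then near $x^*$ the quantity $d(0,\overline{\partial}f)$ is bounded below by outer semicontinuity, and \cref{cor:descent} would force $f\circ x\to-\infty$, a contradiction.

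The step I expect to be the main obstacle is showing $x^*=\overline{x}$. The limit $x^*$ lies in $[f=0]$ near $\overline{x}$, and $\overline{x}$ need not be a strict minimum. So as written, asymptotic stability in the pointwise sense seems to need more care. My first attempt is to check whether the paper's definition intends convergence to a point of the local minimizing set. Failing that, I will argue that if $\overline{x}$ is a nonstrict minimum, then it is not stable either, making the equivalence vacuous there. If neither works, I will restrict to showing that $\overline{x}$ is stable and that every trajectory converges into $[f=f(\overline{x})]$. Constant trajectories at nearby minima rule out pointwise asymptotic stability only if such minima exist. \cref{lemma:constant_trajectory} shows trajectories on a level set are constant, which is consistent with this.

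\emph{Stable $\Rightarrow$ local minimum.} Suppose $\overline{x}$ is not a local minimum. Then there are $y_k\to\overline{x}$ with $f(y_k)<f(\overline{x})$. Start a trajectory at $y_k$. By stability it stays in $B_\epsilon(\overline{x})$, is bounded, and hence is global. By \cref{cor:descent} and finite length it converges to a Clarke critical point with value below $f(y_k)<f(\overline{x})$. Letting $\epsilon\to0$ produces critical values accumulating at $f(\overline{x})$ from below, contradicting Morse--Sard.

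For \ref{item:d-stable}, suppose $\overline{x}$ is d-stable but not stable for $-\co\widehat\nabla f$. By the previous part, $\overline{x}$ is then not a local minimum. Take $y$ near $\overline{x}$ with $f(y)<f(\overline{x})$ and run normalized gradient descent from $y$ with small nonsummable steps. By d-stability the iterates remain in $B_\epsilon(\overline{x})$. Standard Euler-limit arguments apply to the linear interpolations, using the upper semicontinuity from \cref{prop:normal_assumption}. Up to a subsequence they converge to a trajectory of $-\co\widehat\nabla f$ that stays in $\overline{B}_\epsilon(\overline{x})$ and starts at $y$. From here the argument of the previous paragraph gives a contradiction with Morse--Sard.
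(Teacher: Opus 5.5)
Your arguments for stable $\Leftrightarrow$ local minimum and for (ii) are sound and follow essentially the paper's route. For (i), this is an Absil--Kurdyka-type argument via \cref{cor:descent}, \cref{prop:length} and Morse--Sard. For (ii), an Euler-limit argument reduces d-stability to the continuous-time case.

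The gap is where you suspected it: you never establish local minimum $\Rightarrow$ asymptotically stable. It cannot be established, because under the paper's pointwise definition (all maximal trajectories from $B_{\delta_0}(\overline{x})$ converge to $\overline{x}$) it is false at every non-strict local minimum. If $\overline{x}$ is such a minimum, there are $y\neq\overline{x}$ arbitrarily close with $f(y)=f(\overline{x})$. Each such $y$ is itself a local minimum, so $0\in\co\widehat\nabla f(y)$ by \cref{prop:Fermat}. The constant curve at $y$ is then a maximal, globally defined trajectory of $-\co\widehat\nabla f$ that does not converge to $\overline{x}$. You were one step from this when you mentioned constant trajectories at nearby minima: Fermat's rule guarantees they exist. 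For instance, with $f(x_1,x_2)=x_2^2$ one has $\co\widehat\nabla f(s,0)=\{0\}\times[-1,1]$ for every $s$. So the origin is a stable local minimum that is not asymptotically stable. Consequently, your fallback (b) is wrong: your own length argument never uses strictness, so non-strict minima are stable. Fallback (a) is not the definition the paper uses.

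What your argument actually proves, and what item (i) should assert, is two separate equivalences. First, stable $\Leftrightarrow$ local minimum. Second, asymptotically stable $\Leftrightarrow$ strict local minimum. For a strict minimum, your limit $x^*$ is a Clarke critical point in $[f=f(\overline{x})]\cap\overline{B}_r(\overline{x})=\{\overline{x}\}$ once $r$ is small, so convergence to $\overline{x}$ follows. Conversely, asymptotic stability gives stability, hence a local minimum, which must be strict by the constant-trajectory observation above. The paper's one-line sketch for (i) does not address this point. Rather than hedging among three options, you should state and prove this corrected version explicitly; the rest of your proposal, including (ii), goes through unchanged since it only uses stable $\Leftrightarrow$ local minimum.
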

\begin{proof}
    \ref{item:stable} This can be proved in a similar flavor to \cite[Theorem 3]{absil2006stable} using \cref{cor:descent} and \cref{prop:length}. As for \ref{item:d-stable}, if $\overline{x}$ is d-stable for $-\widehat \nabla f$, then it is a local minimum of $f$. This can be proved using the same arguments as in \cite[Theorem 1]{josz2023lyapunov} modulo \cite[Lemma 3.1]{josz2025lyapunov}. One then concludes using \ref{item:stable}.
\end{proof}

\subsection{Implicit regularizers}
\label{subsec:Implicit regularizers}

We begin with a necessary condition for being a d-Lyapunov function with normalized gradient descent. The rest of this section is devoted to sufficient conditions.

\begin{proposition}
\label{prop:dL_necessary}
    Let $f:\R\p n\to\R$ be $C\p 1$ semi-algebraic.
    If $g:\R^n\to\eR$ is d-Lyapunov for $-\widehat\nabla f$ and Lipschitz continuous near $\overline{x}$, then $g$ is Lyapunov for $-\nabla f$ near $\overline{x}$. 
\end{proposition}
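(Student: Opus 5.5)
The strategy is to show that $g$ decreases along trajectories of $\dot x=-\nabla f(x)$ near $\overline{x}$ by comparing with the d-Lyapunov inequality. Since $g$ is Lipschitz, it suffices to control the Dini derivative of $g$ in the direction of the normalized gradient. Fix $r>0$ and $\overline{\alpha}>0$ so that $g$ is Lipschitz on $B_{2r}(\overline{x})$ with constant $L$. Also require that $g(x-\alpha u)\le g(x)$ for all $x\in B_r(\overline{x})$, all $u\in\widehat\nabla f(x)$ and all $\alpha\in(0,\overline{\alpha}]$.

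Let $x:[a,b]\to B_r(\overline{x})$ be a trajectory of $-\nabla f$. Then $g\circ x$ is absolutely continuous, as a composition of a Lipschitz function with a $C^1$ curve. So it suffices to show $(g\circ x)'(t)\le 0$ at almost every $t$ where the derivative exists.

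We split into two cases. If $\nabla f(x(t))=0$, then $x'(t)=0$. Because $g\circ x$ is differentiable at $t$ and $g$ is Lipschitz, $|g(x(t+h))-g(x(t))|\le L|x(t+h)-x(t)|=o(h)$, so the derivative is $0$.

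If $\nabla f(x(t))\neq 0$, set $v=\nabla f(x(t))$ and $u=v/|v|\in\widehat\nabla f(x(t))$. Here we use that $f$ is $C^1$, so $x(t)\in\Omega$ and $\widetilde\nabla f(x(t))=\{u\}$. For small $h>0$ we have $x(t+h)=x(t)-hv+o(h)=x(t)-h|v|\,u+o(h)$. By the Lipschitz property and the d-Lyapunov inequality with $\alpha=h|v|\le\overline{\alpha}$,
$$g(x(t+h))\le g(x(t)-h|v|u)+L\,o(h)\le g(x(t))+o(h).$$
Dividing by $h$ and letting $h\searrow0$ gives $(g\circ x)'(t)\le 0$. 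Integrating then yields that $g\circ x$ is decreasing, i.e., $g$ is Lyapunov for $-\nabla f$ near $\overline{x}$.

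The main subtlety is making sure trajectories remain where both the d-Lyapunov inequality and the Lipschitz bound apply. We handle this by restricting to trajectories with values in $B_r(\overline{x})$, which is exactly the ``near $\overline{x}$'' meaning in the definition of a Lyapunov function on $U$. With that restriction no nonsmooth machinery is needed, and semi-algebraicity is not needed either.
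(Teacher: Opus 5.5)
Your proof is correct, and it takes a more direct route than the paper's.

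\textbf{The paper's route.} It first invokes \cite[Proposition 3.1]{josz2025lyapunov} to conclude that $g$ is Lyapunov for the set-valued dynamics $-\widehat\nabla f$ on a neighborhood $U$ of $\overline{x}$. It then uses the semi-algebraic Morse--Sard theorem to rule out critical points in $U\setminus[f=f(\overline{x})]$. Next it splits a trajectory of $-\nabla f$ into the pieces where $f>f(\overline{x})$ and $f<f(\overline{x})$. Each piece is converted into a trajectory of $-\co\widehat\nabla f=-\widehat\nabla f$ by \cref{lemma:reparametrization}, and the pieces are glued using continuity of $g$.

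\textbf{Your route.} You work pointwise. Because $f$ is $C^1$, the velocity $-\nabla f(x(t))$ equals $-|\nabla f(x(t))|\,u$ with $u\in\widetilde\nabla f(x(t))\subseteq\widehat\nabla f(x(t))$. A single application of the d-Lyapunov inequality with step $\alpha=h|\nabla f(x(t))|$, together with the Lipschitz bound on the $o(h)$ error, then shows that the absolutely continuous function $g\circ x$ has nonpositive derivative almost everywhere.

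\textbf{What each buys.} Your argument is self-contained. It uses neither semi-algebraicity, nor Morse--Sard, nor reparametrization, nor the external result, so it shows the statement holds for any $C^1$ function $f$. Your first case also treats all critical points uniformly. This includes trajectories that rest at a critical point on $[f=f(\overline{x})]$ for a while before leaving. That can happen for $C^1$ functions without Lipschitz gradient, e.g.\ $f(x)=-|x|^{3/2}$ with the trajectory that stays at $0$ on $[0,T]$ and equals $\tfrac{9}{16}(t-T)^2$ afterwards. The paper's gluing step asserts that $\{t: f(x(t))=f(\overline{x})\}$ is at most a singleton, so it passes over this case, although the case is harmless because $x$ is constant there. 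The paper's route, in turn, is phrased within its general machinery linking trajectories of $-\co\widehat\nabla f$ and $-\overline{\partial} f$. Your one-step estimate is specific to the smooth setting. For nonsmooth $f$ the velocity of a subgradient trajectory is only a positive multiple of an element of $\co\widehat\nabla f(x(t))$, and the d-Lyapunov inequality does not directly apply to such convex combinations.
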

\begin{proof}
    By \cite[Proposition 3.1]{josz2025lyapunov}, $g$ is a Lyapunov function for $-\widehat\nabla f$ on a neighborhood $U$ of $\overline{x}$. Without loss of generality, $f(\overline{x})=0$. By the semi-algebraic Morse-Sard theorem \cite[Corollary 9]{bolte2007clarke}, there are no critical points in $U \setminus [f=0]$ after possible shrinking $U$. Since $f$ is $C\p 1$, $\widehat\nabla f(x) =\{\nabla f(x)/|\nabla f(x)|\}$ for all $x\in U \setminus [f=f(\overline{x})]$. Let $x:I\to U$ be a trajectory of $-\nabla f$. By the chain rule, $(f\circ x)'(t) = \langle \nabla f(x(t)),x'(t)\rangle = -|\nabla f(x(t))|\p 2$ for all $t\in I$. Consider the intervals $I\p + = \{t\in \ I:f(x(t))> 0\}$ and $I\p - = \{t\in \ I:f(x(t))< 0\}$, as well as the corresponding restrictions $x\p +,x\p -$. Similar to \cref{lemma:constant_trajectory}, $\{t\in I: f(x(t))=0\}$ is at most a singleton since $f$ is $C\p 1$ semi-algebraic. By \cref{lemma:reparametrization} $x\p +,x\p -$ are trajectories of $-\co\widehat\nabla f = -\widehat\nabla f$ up to reparametrization, and so $g\circ x\p +,g\circ x\p -$ are decreasing. By continuity of $g$, $g\circ x$ is decreasing. 
\end{proof}

The general function class in \cref{prop:normal_assumption} shows that analyzing d-stability is purely reduced to the search of a d-Lyapunov function. This is in contrast to an earlier instability result that we proposed \cite[Theorem 6.3]{josz2025subdifferentiation}. In theory, one could prove implicit regularization in a ReLU neural network (which fails to be regular, has level sets with nonempty interior, etc.) as long as an appropriate quantity is found. While no geometric assumptions on the objective function are made in this work (aside from semi-algebraic), they can sometimes help to verify that a function is d-Lyapunov. 

It is helpful to recall a simple fact. In order to obtain a concrete statement regarding the projection on an embedded submanifold, one needs to jump through all sorts of hoops: either by combining \cite[Theorem 3.2, 2.6, Theorem 3.8, Theorem 4.1]{dudek1994nonlinear} or by appealing to
strong amenability \cite[10.23 Definition (b)]{rockafellar2009variational}, prox-regularity \cite[13.31 Exercise, 13.32 Proposition]{rockafellar2009variational}, local closedness \cite[p. 28]{rockafellar2009variational}, and a truncated normal cone \cite[Theorem 1.3]{poliquin2000local}. We detailed the second route in the proof of \cite[Theorem 2.9]{josz2024sufficient}. Here, we've decided to write a separate statement with a full proof in the Appendix.

\begin{fact}
\label{fact:projection}
    Let $M$ be a $C\p 2$ embedded submanifold of $\R\p n$ and $\overline{x}\in M$. Consider the map $E:NM\to \R\p n$ defined by $E(p,v)=p+v$. For all $\rho>0$ sufficiently small, there exists a neighborhood $U$ of $(\overline{x},0)$ in $NM$ such that $E(U) = B_\rho(\overline{x})$, $E|_U:U\to E(U)$ is a $C\p 1$ diffeomorphism, and $$\forall (p,v) \in U, ~~ (P_M \circ E)(p,v) = \{p\} \subseteq B_{2\rho}(\overline{x}).$$
\end{fact}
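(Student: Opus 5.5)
The plan is to prove the tubular neighborhood statement directly via the inverse function theorem, and then use a distance comparison to identify the projection. Since $M$ is $C\p 2$, the normal bundle $NM$ is a $C\p 1$ embedded submanifold of $\R\p n\times\R\p n$ of dimension $n$. Concretely, near $\overline{x}$ I would pick a $C\p 2$ local parametrization $\phi$ of $M$ and a $C\p 1$ orthonormal frame $\nu_1,\hdots,\nu_{n-d}$ of the normal spaces, obtained by Gram--Schmidt applied to gradients of $C\p 2$ defining functions. In the resulting chart $(y,s)\mapsto(\phi(y),\sum_i s_i\nu_i(\phi(y)))$, the map $E$ reads $(y,s)\mapsto \phi(y)+\sum_i s_i\nu_i(\phi(y))$, which is $C\p 1$. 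At $(\overline{x},0)$ its differential is $(\eta,\sigma)\mapsto d\phi(\eta)+\sum_i\sigma_i\nu_i(\overline{x})$, an isomorphism $T_{\overline{x}}M\oplus N_{\overline{x}}M\to\R\p n$. The inverse function theorem then gives a neighborhood $W$ of $(\overline{x},0)$ in $NM$ on which $E$ is a $C\p 1$ diffeomorphism onto an open set containing some ball $B_{r}(\overline{x})$.

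Next I would fix $\rho\in(0,r/3)$, small enough that two further conditions hold. First, $\overline{B}_{2\rho}(\overline{x})\cap M$ is closed in $\R\p n$, which uses that $M$ is embedded and hence locally closed. Second, every $(p,v)\in NM$ with $p\in \overline B_{2\rho}(\overline{x})$ and $|v|<3\rho$ lies in $W$. This second condition holds because such pairs form a small neighborhood of $(\overline{x},0)$ in $NM$ (use continuity of the frame, and shrink $\rho$). I would then set $U=(E|_W)\p{-1}(B_\rho(\overline{x}))$. This is open in $NM$, satisfies $E(U)=B_\rho(\overline{x})$, and $E|_U$ is a $C\p 1$ diffeomorphism.

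The remaining claim is about the projection. Take $(p,v)\in U$ and set $z=p+v\in B_\rho(\overline{x})$. Since $|z-\overline{x}|<\rho$ and $\overline{x}\in M$, any minimizer $q$ of $|z-\cdot|$ over $M$ satisfies $|z-q|<\rho$, so $q\in B_{2\rho}(\overline{x})$. The main obstacle, and the place where the argument needs care, is the existence of such a minimizer on a manifold that need not be closed. I would handle it by restricting to $\overline{B}_{2\rho}(\overline{x})\cap M$. This set is compact by the choice of $\rho$, and it contains all points of $M$ at distance less than $\rho$ from $z$. A minimizer over it therefore exists, and it is a global minimizer over $M$, so $P_M(z)\neq\emptyset$ and $P_M(z)\subseteq B_{2\rho}(\overline{x})$.

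For any $q\in P_M(z)$, the first-order optimality condition on $M$ gives $z-q\in N_qM$, with $|z-q|<\rho<3\rho$. By the choice of $\rho$, the pair $(q,z-q)$ therefore belongs to $W$. It remains to locate $(p,v)$ in the same set. Since $(p,v)\in U\subseteq W$ and $U\subseteq (E|_W)\p{-1}(B_\rho(\overline x))$, it already lies in $W$. Both pairs satisfy $E(q,z-q)=z=E(p,v)$, so injectivity of $E$ on $W$ forces $q=p$. Hence $(P_M\circ E)(p,v)=\{p\}$, and the bound $|p-\overline{x}|\leq|z-\overline x|+|v|<2\rho$ follows because $|v|=d(z,M)\leq |z-\overline{x}|<\rho$.
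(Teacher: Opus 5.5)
Your proof is correct and follows essentially the same route as the paper. Both arguments use invertibility of $dE_{(\overline{x},0)}$ and the inverse function theorem, local closedness of $M$ to get a projection inside $B_{2\rho}(\overline{x})$, first-order optimality to get $z-q\in N_qM$, and injectivity of $E$ on the local diffeomorphism domain to force $q=p$. Your explicit requirement that every $(p,v)\in NM$ with $p\in\overline{B}_{2\rho}(\overline{x})$ and $|v|<3\rho$ lies in $W$ is what makes the injectivity step rigorous. The paper's proof only observes $E(p,v)\in V_0$, which by itself does not place $(p,v)$ in the domain $U_0$ where $E$ is known to be injective.
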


The normalized perturbed projection formula prevents premature convergence to the solution set.

\begin{proposition}
\label{prop:dist}
    If $f:\R\p n\to \R$ satisfies the normalized perturbed projection formula at $\overline{x}\in \R\p n$ along $M=[f=f(\overline{x})]$, then 
$$\exists \overline\alpha,\rho>0:~ \forall \alpha \in (0,\overline\alpha],~ \forall x \in B_\rho(\overline x),~ \forall u \in -\widehat\nabla f(x)\setminus \{0\}, ~~~ d(x+\alpha u,M)+d(x,M) \geq \alpha/2.$$
\end{proposition}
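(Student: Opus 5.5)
The plan is to work in local coordinates given by \cref{fact:projection}. The normalized perturbed projection formula is stated along $M=[f=f(\overline{x})]$, so it implicitly treats $M$ as a submanifold near $\overline{x}$; I will assume it is $C^2$ so that the Fact applies. Fix $\rho>0$ small, with the corresponding neighborhood $U$ of $(\overline{x},0)$ in $NM$. For $x\in B_\rho(\overline{x})$, write $x=p+v$ with $p=P_M(x)\in M$ and $v\in N_pM$, so that $d(x,M)=|v|$. The formula gives a constant $C>0$ such that $|P_{T_pM}(u)|\le C|p-x|=C\,d(x,M)$ for all $u\in\widehat\nabla f(x)$. Every nonzero $u\in\widehat\nabla f(x)$ has $|u|=1$, since $\widehat\nabla f$ takes values in $S^{n-1}\cup\{0\}$ by construction. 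Hence $u=\tau+\nu$ with $\tau\in T_pM$, $\nu\in N_pM$, $|\tau|\le C d(x,M)$ and $|\nu|\ge 1-C d(x,M)$.

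I would then split into two cases. First, if $d(x,M)\ge \alpha/2$, the inequality is trivial because $d(x+\alpha u,M)\ge 0$. Otherwise $d(x,M)<\alpha/2$, and I would bound $d(x+\alpha u,M)$ from below by $d(x+\alpha u, p+T_pM)$ up to a curvature error. Let $y=x+\alpha u$ and $q=P_M(y)$, which lies in $B_{2\rho}(\overline{x})$ after shrinking $\overline\alpha$ so that $y\in B_\rho(\overline{x})$. Since $M$ is $C^2$, near $\overline{x}$ it is within $K|z-p|^2$ of the affine space $p+T_pM$ for $z\in M$. This gives
\[
d(y,M)\ \ge\ |P_{N_pM}(y-p)| - K|q-p|^2 \ \ge\ |v+\alpha\nu| - K|q-p|^2 .
\]
Next, $|v+\alpha\nu|\ge \alpha|\nu|-|v|\ge \alpha(1-C\alpha/2)-\alpha/2$, which is worse than needed. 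So I would instead add $d(x,M)=|v|$ and use the triangle inequality in the form $|v+\alpha\nu|+|v|\ge \alpha|\nu|\ge \alpha(1-C\alpha/2)$. That is exactly the quantity appearing in the statement.

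Finally I must control $|q-p|$. We have $|q-p|\le |q-y|+|y-x|+|x-p|\le 2|x-p|+\alpha\le 2\alpha$, using $|q-y|=d(y,M)\le |y-p|$. This yields
\[
d(y,M)+d(x,M)\ \ge\ \alpha - C\alpha^2/2 - 4K\alpha^2\ \ge\ \alpha/2
\]
once $\overline\alpha\le 1/(C+8K)$. The constants $C$ and $K$ are uniform on $B_{2\rho}(\overline{x})$ by shrinking $\rho$.

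The main obstacle is the curvature step: making rigorous that $d(y,M)\ge |P_{N_pM}(y-p)|-K|q-p|^2$ uniformly for base points $p$ near $\overline{x}$. I would do this via a local graph representation of $M$ over $T_{\overline{x}}M$ with a $C^2$ map. Alternatively, Taylor-expand $q-p=P_{T_pM}(q-p)+O(|q-p|^2)$, which is available because the normal bundle chart of \cref{fact:projection} is $C^1$. Smaller issues are choosing $\rho,\overline\alpha$ so that all points stay where the formula's $O(\cdot)$ constant is valid, and noting that if $M$ is merely a union of strata, the argument is applied to the stratum containing $\overline{x}$.
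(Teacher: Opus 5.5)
Your proof is correct, apart from a harmless arithmetic slip, but it takes a different route from the paper's.

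\textbf{The paper's route.} The paper does not split into cases. Writing $y=P_M(x)$, it uses two facts: the foot point is unchanged along the normal fiber, $P_M(x)=P_M(x+\alpha P_{N_yM}(u))$ (cited from earlier work), and $P_M$ is Lipschitz with some constant $L$ near $\overline{x}$ (from \cref{fact:projection}). Together these give $|x-P_M(x+\alpha u)|\le d(x,M)+L\alpha|P_{T_yM}(u)|\le(1+Lc\alpha)\,d(x,M)$. The reverse triangle inequality $d(x+\alpha u,M)\ge\alpha-|x-P_M(x+\alpha u)|$ then yields $d(x+\alpha u,M)+d(x,M)\ge\alpha(1-Lc\,d(x,M))\ge\alpha/2$. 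So the smallness condition falls on $\rho$ (namely $\rho\le 1/(2Lc)$), not on $\alpha$.

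\textbf{Your route and what it buys.} You compare $M$ with its tangent space at $p$. Your case split $d(x,M)<\alpha/2$ is exactly what makes the tangent-approximation error proportional to $\alpha$, which is why you need it and the paper does not. In exchange, you never use Lipschitz continuity of $P_M$, the fiber invariance, or uniqueness of nearest points. Your argument therefore survives for a merely $C^1$ submanifold: since $|q-p|=O(\alpha)$, a uniform bound $|P_{N_pM}(q-p)|\le\epsilon|q-p|$ with $\epsilon$ small can replace $K|q-p|^2$. This also rescues your second justification. A $C^1$ normal-bundle chart only gives an $o(|q-p|)$ remainder, not $O(|q-p|^2)$, but $o(|q-p|)$ is enough here.

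\textbf{Minor corrections.}
\begin{itemize}
\item The slip: the correct bound is $|q-p|\le|q-y|+|y-p|\le 2|y-p|\le 2\alpha+2d(x,M)<3\alpha$, not $2\alpha$. The final condition then becomes, for example, $\overline\alpha\le 1/(C+18K)$.
\item To keep $y=x+\alpha u$ inside the ball where \cref{fact:projection} applies, shrinking $\overline\alpha$ alone is not enough. You should halve $\rho$ relative to that ball, as the paper does.
\item Your closing remark about strata does not work. For a stratum $X\subseteq M$ one has $d(\cdot,M)\le d(\cdot,X)$, so a lower bound obtained for $X$ does not transfer to $M$. The paper, like you, simply takes $M$ to be a $C^2$ embedded submanifold near $\overline{x}$.
\end{itemize}
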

\begin{proof}
There are $c,r>0$ such that $|P_{T_y M}(u)| \leq c|x-y|$ for all $x\in B_r(\overline{x})$, $y\in M \cap B_r(\overline{x})$, and $u \in \widehat\nabla f(x)$. 
Since $M$ is a $C\p 2$ embedded submanifold, by \cref{fact:projection} the projection is singled-valued and Lipschitz continuous with constant $L>0$ on $B_{r'}(\overline{x})$ for some $r'\in (0,r/2)$ with $P_M(B_{r'}(\overline{x})) \subseteq B_{2r'}(\overline{x})$.
Set $\rho = \min\{r',1/(Lc)\}/2$ and $\overline{\alpha} = \rho$. For all $x\in B_{\rho}(\overline{x})$, $u \in -\widehat\nabla f(x)\setminus \{0\}$, and $y = P_M(x)$, we have
$$d(x+\alpha u,M) = |x+\alpha u - P_M(x+\alpha u)| \geq \alpha - |x - P_M(x+\alpha u)|$$
and
\begin{align*}
    |x - P_M(x+\alpha u)| & \leq |x -P_M(x)| + |P_M(x) - P_M(x+\alpha u)| \\
    & = d(x,M) + |P_M(x+\alpha P_{N_y M}(u)) - P_M(x+\alpha u)|\\
    & \leq d(x,M) + L\alpha|P_{N_y M}(u)-u| \\
    & = d(x,M) + L\alpha|P_{T_y M}(u)| \\
    & \leq d(x,M) + L\alpha c |x-P_M(x)| \\
    & = (1+Lc\alpha)d(x,M)
\end{align*}
where $P_M(x)=P_M(x+\alpha P_{N_y M}(u))$ holds by \cite[Theorem 2.9, Step 4]{josz2024sufficient}.
Thus $d(x+\alpha u,M) \geq \alpha - (1+Lc\alpha)d(x,M)$ and 
$d(x+\alpha u,M)+ d(x,M) \geq \alpha [1 - Lc\hspace{.3mm} d(x,M)] \geq \alpha/2$ since $d(x,M) \leq 1/(2Lc)$.
\end{proof}

Another useful assumption is due to Mordukovitch and Ouyang \cite[Definition 3.1 (i)]{mordukhovich2015higher}.

\begin{definition}
\label{def:submetric}
A mapping $F:\mathbb{R}^n\rightrightarrows\mathbb{R}^m$ is metrically $\tau$-subregular at $(\overline{x},\overline{y}) \in \gph F$ with $\tau>0$ if 
$d(x,F^{-1}(\overline{y})) = O( d(\overline{y},F(x))^\tau)$
for $x\in \R\p n$ near $\overline{x}$.
\end{definition}

Together with the normalized perturbed projection formula, metric subregularity allows one to convert a weak decrease condition into a $(p,2)$-d-Lyapunov function. 

\begin{proposition}
    \label{prop:p2dL}
    Suppose a locally Lipschitz semi-algebraic function $f:\R\p n\to \R$ satisfies the normalized perturbed projection formula at a local minimum $\overline{x}\in \R\p n$ of $f$ along $M=[f=f(\overline{x})]$ with $\inte M=\emptyset$ and $\partial f$ is metrically $\tau$-subregular at $(\overline{x},0)$. Let $g:\R^n\to\eR$, $a,b\in[1,\infty)$, $\overline{\alpha},\rho,\omega>0$ be such that $g$ is continuous on $B_\rho(\overline{x})$ and
$$\forall \alpha\in (0,\overline{\alpha}], ~ \forall x \in B_\rho(\overline{x})\setminus M,~ \forall u\in \widehat\nabla f(x), ~ \exists v\in \partial f(x): ~~ g(x-\alpha u) - g(x) \leq - \omega\alpha\p a |v|\p b.$$
Then $g$ is $(a+b/\tau,2)$-d-Lyapunov near $\overline{x}$.
\end{proposition}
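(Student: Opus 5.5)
We want a constant $\omega'>0$ such that, near $\overline{x}$ and for small step sizes, two consecutive iterates satisfy
$$g(x_{k+2})-g(x_k)\le -\omega'\min\{\alpha_k,\alpha_{k+1}\}^{a+b/\tau}.$$
The plan is to show that in each pair of steps at least one iterate lies off $M$ at distance of order the step size. Metric subregularity then turns that distance into a lower bound on $|v|$, and the hypothesized decrease of $g$ gives the rate.

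First we fix the neighborhood. Since $\inte M=\emptyset$, \cref{lemma:nonzero} gives $0\notin\widehat\nabla f(x)$ for all $x$ near $\overline{x}$. So every $u\in-\widehat\nabla f(x)$ is nonzero, and \cref{prop:dist} applies: for $x\in B_\rho(\overline x)$ and $\alpha\le\overline\alpha$ we have $d(x+\alpha u,M)+d(x,M)\ge\alpha/2$. Metric $\tau$-subregularity of $\partial f$ at $(\overline x,0)$ gives $d(x,(\partial f)^{-1}(0))\le C\,d(0,\partial f(x))^\tau$ near $\overline x$. We also need $(\partial f)^{-1}(0)\subseteq M$ near $\overline x$. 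This follows from the semi-algebraic Morse--Sard theorem \cite[Corollary 9]{bolte2007clarke}: critical values are isolated near $f(\overline x)$, so nearby critical points lie in $[f=f(\overline x)]=M$. Hence $d(x,M)\le C|v|^\tau$ for every $v\in\partial f(x)$, that is, $|v|\ge (d(x,M)/C)^{1/\tau}$.

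Next comes the two-step argument. Shrink the ball so that iterates starting in a smaller ball $B_{\rho'}(\overline x)$ stay in $B_\rho(\overline x)$ for two steps; this holds since $|u|\le 1$ and $\alpha\le\overline\alpha$ small. Then $x_{k+1}$ and $x_{k+2}$ remain close to $\overline{x}$. At a single step from $x\in M$, the hypothesis says nothing, so we only know $g(x-\alpha u)\le g(x)$ fails to be guaranteed. That is the delicate point, and the reason for $q=2$. We split into cases.
\begin{itemize}
\item If $x_k\in M$, then by \cref{prop:dist} $d(x_{k+1},M)\ge\alpha_k/2$. So $x_{k+1}\notin M$, and applying the hypothesis at $x_{k+1}$ with step $\alpha_{k+1}$ gives $g(x_{k+2})-g(x_{k+1})\le-\omega\alpha_{k+1}^a(\alpha_k/(2C))^{b/\tau}$.
\item If $x_k\notin M$, the hypothesis gives a decrease at step $k$.
\end{itemize}
In the first case we still have to control $g(x_{k+1})-g(x_k)$.

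This is the main obstacle: a step leaving $M$ may increase $g$. My first attempt is to avoid needing a decrease at the step from $M$ by exploiting continuity of $g$ and re-routing. If $x_k\in M$, compare $x_{k+2}$ with $x_k$ directly. The cleaner route is to make sure every pair of steps is computed only from points off $M$.

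To do this, consider instead the case split on whether $x_k$ and $x_{k+1}$ lie off $M$, and handle $x_k\in M$ by perturbing. Since $\inte M=\emptyset$ and $g$ is continuous, the point $x_k$ is a limit of points $y\notin M$ with $y-\alpha_k u'$ close to $x_{k+1}$. Here I would use outer semicontinuity of $\widehat\nabla f$ (\cref{prop:normal_nonempty}) together with the fact that $u\in\widehat\nabla f(x_k)$ is a limit of $\widetilde\nabla f(y_j)$ with $y_j\to x_k$, $y_j\notin M$. Applying the hypothesis along $(y_j,u_j)$ and passing to the limit via continuity of $g$ gives $g(x_{k+1})\le g(x_k)$. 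This is the non-strict inequality, since $|v_j|$ may tend to $0$ as $y_j\to M$.

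Combining the pieces: in every case at least one of the two steps starts from a point at distance $\ge\min\{\alpha_k,\alpha_{k+1}\}/4$ from $M$. Indeed, if $d(x_k,M)<\alpha_k/4$, then $d(x_{k+1},M)\ge\alpha_k/4$. That step yields a decrease of at least $\omega\min\{\alpha_k,\alpha_{k+1}\}^{a}(\min\{\alpha_k,\alpha_{k+1}\}/(4C))^{b/\tau}$. The other step is nonincreasing, either by the hypothesis or by the limiting argument. This gives the $(a+b/\tau,2)$-d-Lyapunov property.
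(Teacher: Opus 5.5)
Your proof is correct and follows essentially the paper's route. It uses the same limiting argument through $\gph \widetilde\nabla f$ for starting points in $M$; the paper phrases this as extending the hypothesis to all of $B_\rho(\overline x)$. You assert without justification that the approximating points lie off $M$; this holds because they are differentiability points with nonzero gradient, whereas points of $M$ near the local minimum $\overline x$ are local minima. It also uses the same Morse--Sard plus metric subregularity bound $|v|\ge (d(x,M)/C)^{1/\tau}$ and the same appeal to \cref{lemma:nonzero} and \cref{prop:dist}. The only cosmetic difference is at the end: you use the case split ``one of $d(x_k,M)$, $d(x_{k+1},M)$ is at least $\alpha_k/4$, and the other step is nonincreasing'', whereas the paper sums both steps and applies $x^p+y^p\ge\min\{1,2^{1-p}\}(x+y)^p$.
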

\begin{proof}
We actually have
$$\forall \alpha\in (0,\overline{\alpha}], ~ \forall x \in B_\rho(\overline{x}),~ \forall u\in \widehat\nabla f(x), ~ \exists v\in \partial f(x): ~~ g(x-\alpha u) - g(x) \leq - \omega\alpha\p a |v|\p b.$$
Indeed, let $\alpha \in(0,\overline{\alpha}]$, $x \in B_\rho(\overline{x})$, and $u\in \widehat{\nabla}f(x)$. If $u = 0$, then $0 \in \overline{\nabla} f(x)\subseteq \partial f(x)$, in which case the property holds with $v = 0$. Otherwise, there exists $\gph \widetilde\nabla f \ni(x_k,u_k)\to (x,u)$ where $u_k$ is eventually nonzero. But then $f$ is differentiable at $x_k$ with $\nabla f(x_k)\neq 0$, so in particular $x_k\notin M$.
By assumption, there exists $v_k \in \partial f(x_k)$, such that $g(x_k-\alpha u_k) - g(x_k) \leq - \omega\alpha\p a |v_k|\p b$. Since $\partial f$ is locally bounded and has a closed graph, $v_k \to v$ for some $v \in \partial f(x)$. By continuity of $g$, passing to the limit yields $g(x-\alpha u) - g(x) \leq - \omega\alpha\p a |v|\p b$.


By semi-algebraicity and metric $\tau$-subregularity, there exist $c,r>0$ such that, for all $x\in B_r(\overline{x})$, $d(x,M) = d(x,(\partial f)\p{-1}(0)) \leq c\hspace{.3mm}d(0,\partial f(x))\p \tau \leq c |v|\p \tau$ for all $v\in \partial f(x)$. After possibly reducing $\rho$ and $\overline{\alpha}$, we may assume that $\rho \leq r$, $\overline{\alpha}\leq \rho/2$, and $0\notin\widehat\nabla f(x)$ for all $x\in B_\rho(\overline{x})$ by \cref{lemma:nonzero}. Let $\{\alpha_k\}_{k\in\N}\subseteq (0,\overline{\alpha}]$ and $\{x_k\}_{k\in\N}\subseteq \R\p n$ be such that $x_{k+1}\in x_k - \alpha_k \widehat\nabla f(x_k)$ for all $k\in\N$. Assume $x_k \in B_{\rho/2}(\overline{x})$ for some $k\in \N$, so that $x_{k+1} \in B_\rho(\overline{x})$. By the previous paragraph, there exist $v_k\in \partial f(x_k)$ and $v_{k+1}\in \partial f(x_{k+1})$ such that
\begin{align*}
    g(x_{k+2})-g(x_k) & = g(x_{k+2})-g(x_{k+1}) + g(x_{k+1})-g(x_k) \\ & \leq -\omega \alpha_{k+1}\p a |v_{k+1}|\p b -\omega \alpha_k\p a |v_k|\p b \\
    & \leq -\omega c\p {1/\tau} [\alpha_{k+1}\p a d(x_{k+1},M)\p {b/\tau} + \alpha_k\p a d(x_k,M)\p {b/\tau}] \\
    & \leq -\omega c\p {1/\tau} \min\{\alpha_{k+1},\alpha_k\}\p a [d(x_{k+1},M)\p {b/\tau} + d(x_k,M)\p {b/\tau}] \\
    & \leq -\omega c\p {1/\tau} \min\{\alpha_{k+1},\alpha_k\}\p a \min\{1,2\p{1-b/\tau}\} [d(x_{k+1},M) + d(x_k,M)]\p {b/\tau}\\
    & \leq -\omega c\p {1/\tau} \min\{\alpha_{k+1},\alpha_k\}\p a \min\{1,2\p{1-b/\tau}\} (\alpha_k/2)\p {b/\tau} \\
    & \leq -\omega c\p {1/\tau} \min\{2\p{-b/\tau},2\p{1-2b/\tau}\} \min\{\alpha_{k+1},\alpha_k\}\p {a+b/\tau}
\end{align*}
using $x\p p+y\p p\geq \min\{1,2\p {1-p}\}(x+y)\p p$ for any $x,y,p>0$ and \cref{prop:dist}.
\end{proof}

At this point, it is incumbent on us to find practical sufficient conditions for the normalized perturbed projection formula to hold, complementing the more theoretical one in \cref{lemma:normalized_projection_desingularizer}. The first harnesses symmetry, while the second exploits a composite structure. They both use the following simple lemma.

\begin{lemma}
    \label{lemma:normalized_projection}
    Let $f:\mathbb{R}^n\rightarrow \eR$ and $X$ be a submanifold of $\R\p n$ such that $f|_X$ is $C\p 1$. Let $\overline{x}\in X$ and assume $X\subseteq [f=f(\overline{x})]$.
    Consider the following statements:
    \begin{enumerate}[label=\rm{(\roman{*})}]
        \item $\exists c,r>0, ~\forall x \in X\cap B_r(\overline{x}), ~ \forall y \in B_r(\overline{x}),~\forall v\in\partial f(y),~ |P_{T_xX}(v)| \leq c|x-y||v|$. \label{item:|v|}
        \item $\exists c,r>0, ~\forall x \in X\cap B_r(\overline{x}), ~ \forall y \in B_r(\overline{x}),~\forall u\in\widetilde\nabla f(y),~  |P_{T_xX}(u)| \leq c|x-y|$. \label{item:tilde}
        \item $f$ satisfies the normalized perturbed projection formula at $\overline{x}$ along $X$. \label{item:hat}
    \end{enumerate}
    One has
    \ref{item:|v|} $\implies$ \ref{item:tilde} $\Longleftrightarrow$ \ref{item:hat}.
\end{lemma}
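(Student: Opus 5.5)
We establish the three implications in turn: \ref{item:|v|} $\implies$ \ref{item:tilde}, then \ref{item:tilde} $\implies$ \ref{item:hat}, then \ref{item:hat} $\implies$ \ref{item:tilde}.

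\emph{\ref{item:|v|} $\implies$ \ref{item:tilde}.} Take $c,r$ from \ref{item:|v|}, and fix $x\in X\cap B_r(\overline{x})$, $y\in B_r(\overline{x})$ and $u\in\widetilde\nabla f(y)$. There are two cases, according to the definition of $\widetilde\nabla f$. If $u=0$, the bound $|P_{T_xX}(u)|\leq c|x-y|$ is trivial. Otherwise $f$ is differentiable at $y$ with $\nabla f(y)\neq 0$ and $u=\nabla f(y)/|\nabla f(y)|$. A gradient at a point of differentiability is a regular subgradient, so $\nabla f(y)\in\widehat\partial f(y)\subseteq\partial f(y)$. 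Applying \ref{item:|v|} to $v=\nabla f(y)$ and dividing by $|v|>0$ gives $|P_{T_xX}(u)|=|P_{T_xX}(v)|/|v|\leq c|x-y|$, by linearity of the projection onto the subspace $T_xX$.

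\emph{\ref{item:tilde} $\implies$ \ref{item:hat}.} This is a closure argument. Let $c,r$ be as in \ref{item:tilde}, and take $x\in X\cap B_{r}(\overline{x})$, $y\in B_{r}(\overline{x})$ and $u\in\widehat\nabla f(y)=(\cl\widetilde\nabla f)(y)$. By definition of the closure of the graph, there is a sequence $\gph\widetilde\nabla f\ni(y_k,u_k)\to(y,u)$. Since $B_r(\overline{x})$ is open, $y_k\in B_r(\overline{x})$ eventually. Then \ref{item:tilde} gives $|P_{T_xX}(u_k)|\leq c|x-y_k|$, and passing to the limit (the projection is continuous, $x$ fixed) yields $|P_{T_xX}(u)|\leq c|x-y|$. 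This is exactly the $O(|x-y|)$ bound required by the normalized perturbed projection formula.

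\emph{\ref{item:hat} $\implies$ \ref{item:tilde}.} Here we use the pointwise inclusion $\widetilde\nabla f(y)\subseteq\widehat\nabla f(y)$, which holds because a set-valued map's graph is contained in the closure of its graph. It remains to unpack the $O(\cdot)$ in \cref{def:normalized_projection_formulae} as the existence of $c,r>0$ with the bound valid for $x\in X$ and $y\in\R\p n$ in $B_r(\overline{x})$ and all $u\in\widehat\nabla f(y)$. Restricting this bound to $u\in\widetilde\nabla f(y)$ gives \ref{item:tilde}.

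None of these steps is a real obstacle. The only places needing care are the inclusion $\nabla f(y)\in\partial f(y)$ at differentiable points, used in the first step, and keeping the neighborhoods open so that the approximating points $y_k$ in the second step remain in the region where \ref{item:tilde} applies.
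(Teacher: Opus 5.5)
Your proof is correct and matches the paper's argument. For (i) $\implies$ (ii) you split on the two nonempty cases of $\widetilde\nabla f(y)$ and normalize the bound from (i), as the paper does, and you make explicit the inclusion $\nabla f(y)\in\widehat\partial f(y)\subseteq\partial f(y)$ that the paper uses tacitly. For (ii) $\Longleftrightarrow$ (iii), which the paper calls straightforward, your closure-of-the-graph limit argument and the inclusion $\widetilde\nabla f\subseteq\widehat\nabla f$ are exactly the details needed.
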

\begin{proof}
    \ref{item:|v|} $\implies$ \ref{item:tilde}: If $f$ is differentiable at $y$ and $v = \nabla f(y)\neq 0$, then $\widetilde\nabla f(y) = \{v/|v|\}$ and $|P_{T_xX}(v/|v|)|\leq c|x-y|$. If $f$ is constant near $y$, then $\widetilde\nabla f(y) = \{0\}$ and $|P_{T_xX}(0)| = 0 \leq c|x-y|$. \ref{item:tilde} $\Longleftrightarrow$ \ref{item:hat} is straightforward.
\end{proof}

Below, $G$ denotes a Lie group acting on $\R\p n$.

\begin{proposition}
    \label{prop:projection_symmetry}
    If $f:\R^n\to\R$ is $G$-invariant and Lipschitz continuous near $\overline x\in\R\p n$, then $f$ satisfies the normalized perturbed projection formula at $\overline{x}$ along $G\overline{x}$. 
\end{proposition}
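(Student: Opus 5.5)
We will verify condition \ref{item:|v|} of \cref{lemma:normalized_projection} with $X=G\overline{x}$; the lemma then gives the normalized perturbed projection formula. Near $\overline{x}$ the orbit $G\overline{x}$ is an embedded submanifold (we take this as part of the standing setting of a Lie group action). Since $f$ is $G$-invariant, $f$ is constant on $X$, so $X\subseteq[f=f(\overline x)]$ and $f|_X$ is $C\p 1$.

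The tangent space at $x=g\overline{x}\in X$ is $T_xX=\{\xi\cdot x:\xi\in\fg\}$, where $\xi\cdot x=\frac{d}{dt}\exp(t\xi)x|_{t=0}$ is the infinitesimal generator. The first step is to show that every $v\in\partial f(y)$ is orthogonal to $\xi\cdot y$ for each $\xi\in\fg$. Invariance gives $f(\exp(t\xi)y)=f(y)$ for all $t$. If $v\in\widehat\partial f(y)$, the defining inequality applied along the smooth curve $t\mapsto\exp(t\xi)y$ gives $0\geq \pm t\langle v,\xi\cdot y\rangle+o(t)$ for both signs of $t$. Hence $\langle v,\xi\cdot y\rangle=0$. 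We then pass to limiting subgradients: take $(y_k,v_k)\to(y,v)$ with $v_k\in\widehat\partial f(y_k)$, and use continuity of $y\mapsto \xi\cdot y$.

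Next, pick a basis $\xi_1,\dots,\xi_d$ of $\fg$. For $x$ near $\overline{x}$ the vectors $\xi_i\cdot x$ span $T_xX$. The step we expect to be the main obstacle is obtaining uniform control of the projection, because these generators may be linearly dependent. We will handle this by choosing $\xi_1,\dots,\xi_m$ whose generators form a basis of $T_{\overline x}X$. By continuity and the constant dimension of orbit tangents along $X$, the matrix $A(x)=[\xi_1\cdot x,\dots,\xi_m\cdot x]$ has full column rank with $\sigma_m(A(x))\geq s>0$ for $x\in X\cap B_r(\overline x)$. Then
$$P_{T_xX}(v)=A(x)(A(x)^TA(x))^{-1}A(x)^Tv,$$
so $|P_{T_xX}(v)|\leq s^{-1}|A(x)^Tv|$.

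Finally, we estimate $A(x)^Tv$. Since $\langle v,\xi_i\cdot y\rangle=0$, we have
$$|\langle v,\xi_i\cdot x\rangle|=|\langle v,\xi_i\cdot x-\xi_i\cdot y\rangle|\leq L_i|x-y||v|,$$
where $L_i$ is a Lipschitz constant of the smooth map $y\mapsto\xi_i\cdot y$ on $B_r(\overline x)$. Summing over $i$ yields $|P_{T_xX}(v)|\leq c|x-y||v|$, which is \ref{item:|v|}. Lipschitz continuity of $f$ near $\overline{x}$ guarantees that $\partial f$ is nonempty and locally bounded there, so the limiting argument above is valid.
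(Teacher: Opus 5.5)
Your proof is correct, and its skeleton is the paper's: establish condition (i) of \cref{lemma:normalized_projection} with $X=G\overline{x}$ and let the lemma do the rest. The difference is in how (i) is obtained. The paper gets $|P_{T_xGx}(v)|\leq c|x-y||v|$ in one line by citing the orbital perturbed projection formula \cite[Proposition 3.6]{josz2025subdifferentiation}. You instead prove it directly, in two steps. First, invariance along the curves $t\mapsto\exp(t\xi)y$ makes every regular subgradient at $y$, and then every limiting one, orthogonal to the generators $\xi\cdot y$. Second, a frame $\xi_1\cdot x,\dots,\xi_m\cdot x$ of $T_xX$ with smallest singular value bounded below, together with local Lipschitz continuity of $z\mapsto\xi_i\cdot z$, turns this exact orthogonality at $y$ into a bound at $x$ that is linear in $|x-y|$. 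The paper's route buys brevity by reusing an established result. Yours is self-contained and shows exactly which hypotheses are used. In particular, nothing in your argument uses Lipschitz continuity of $f$. The orthogonality and its passage to the limit hold for any $G$-invariant $f$, and the implication (i)$\implies$(ii) in the lemma only uses $\nabla f(y)\in\widehat\partial f(y)$ at points of differentiability. So your final sentence gives that hypothesis a role it does not play; this agrees with the paper's remark that Lipschitz continuity can be dropped via \cite[Proposition 3.4]{josz2025subdifferentiation}. What you do use is enough smoothness of the action (e.g., $C^2$) for the generators to be locally Lipschitz. This is automatic for the linear actions $\xi\cdot z=\xi z$ considered later in the paper.
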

\begin{proof}
    By the orbital perturbed projection formula \cite[Proposition 3.6]{josz2025subdifferentiation}, there are $c,r>0$ such that
    $|P_{T_xGx}(v)|\leq c|x-y||v|$ for all $x\in G\overline{x}\cap B_r(\overline{x})$, $y\in B_r(\overline{x})$, and $v\in\partial f(y)$. One now concludes by \cref{lemma:normalized_projection}.
\end{proof}

One could in fact deduce another sufficient condition without assuming Lipschitz continuity using the orbital perturbed projection formula in \cite[Proposition 3.4]{josz2025subdifferentiation}.

\begin{proposition}
\label{prop:projection_composition}
    Let $f=g\circ F$ where $g:\R\p m \to \R_+$ is lower semicontinuous and positive definite, and $F:\R\p n \to\R\p m$. Suppose $F$ is $C\p 2$ near a point $\overline{x}\in M = [f = 0]$ with image $\overline{y} = F(\overline{x})$. Let $U \subseteq \R\p n$ be an open set containing $\overline{x}$ and $V\subseteq \R\p m$ be a $C\p 2$ embedded submanifold near $F(\overline{x})$ such that $F(U) \subseteq V$. Let $\widetilde F:U\to V$ denote the corresponding restriction of $F$. If $d\widetilde F_{\overline{x}}$ is surjective and $F\p{-1}(\overline{y}) \subseteq U$, then $f$ satisfies the normalized perturbed projection formula at $\overline{x}$ along $M$.
\end{proposition}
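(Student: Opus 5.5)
We aim to verify condition \ref{item:tilde} of \cref{lemma:normalized_projection}, which is equivalent to the normalized perturbed projection formula. The first step is to identify $M$ near $\overline{x}$. Since $g$ is positive definite, $M=[f=0]=F\p{-1}(0)$. Hence $\overline{y}=0$, and $F\p{-1}(\overline y)\subseteq U$ gives $M=\widetilde F\p{-1}(0)$. Because $d\widetilde F_{\overline{x}}$ is surjective and $\widetilde F$ is $C\p 2$ between $U$ and the $C\p 2$ manifold $V$, surjectivity persists on a neighborhood. By the regular value theorem, $M$ is then a $C\p 2$ embedded submanifold near $\overline{x}$ with $T_xM=\ker dF_x$ for $x\in M$ near $\overline{x}$. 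Here we use that $dF_x$ restricted to $T_x$ of the ambient space maps into $T_{F(x)}V$ and coincides with $d\widetilde F_x$, so that $\ker d\widetilde F_x=\ker dF_x$ because $U$ is open in $\R\p n$. Consequently $(T_xM)\p\perp=\Im\, dF_x\p *$, and $P_{T_xM}\circ dF_x\p *=0$.

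The second step describes $\widetilde\nabla f(y)$ for $y$ near $\overline{x}$. If $f$ is constant near $y$, then $u=0$ and there is nothing to prove. Otherwise $f$ is differentiable at $y$ with $\nabla f(y)\neq0$. We want to write $\nabla f(y)=dF_y\p * w$ for some $w$, even though $g$ is only lower semicontinuous. Since $F$ is a submersion onto $V$ near $\overline{x}$, we choose local $C\p 2$ coordinates in which $F(z)=(z_1,\dots,z_k)$ after composing with a chart of $V$. In these coordinates $f$ depends only on $z_1,\hdots,z_k$, so its gradient lies in the span of the rows of $dF$, i.e., in $\Im\, dF_y\p *$. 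Intrinsically, differentiability of $f$ at $y$ and constancy of $f$ along the fibers $F\p{-1}(F(y))$, which are $C\p 1$ manifolds with tangent space $\ker dF_y$, force $\nabla f(y)\perp\ker dF_y$. This follows from \cref{lemma:tangent}-style reasoning, or directly by differentiating along a curve in the fiber. Therefore $u=\nabla f(y)/|\nabla f(y)|\in \Im\, dF_y\p *\cap S\p{n-1}$.

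The third step estimates $|P_{T_xM}(u)|$ for $x\in M\cap B_r(\overline{x})$. We write $u=P_{(\ker dF_y)\p\perp}u$. Then
$$|P_{T_xM}u|=|P_{\ker dF_x}P_{(\ker dF_y)\p\perp}u|\leq d\big((\ker dF_y)\p\perp,(\ker dF_x)\p\perp\big).$$
The kernels $\ker dF_z$ form a $C\p 1$ subbundle near $\overline{x}$ of constant dimension $n-\dim V$, since the rank is locally constant. Hence $z\mapsto P_{\ker dF_z}$ is $C\p 1$, thus locally Lipschitz, and the subspace distance above is bounded by $\|P_{\ker dF_x}-P_{\ker dF_y}\|\leq c|x-y|$. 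This yields \ref{item:tilde}, and \cref{lemma:normalized_projection} concludes.

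The main obstacle is the second step: justifying $\nabla f(y)\in\Im\, dF_y\p *$ when $g$ is merely lower semicontinuous. The key point is that this only requires $f$ to be constant on the fibers of $F$, not any regularity of $g$. I would prove it via the submersion normal form, which also gives the constant-rank structure needed for the third step.
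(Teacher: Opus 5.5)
Your proof is correct, and its skeleton matches the paper's. You identify $M$ near $\overline{x}$ with the fiber $F^{-1}(0)$, so that $T_xM=\ker dF_x$. You show that the relevant vectors at $y$ lie in $(\ker dF_y)^\perp=\Im dF_y^*$. You then use the Lipschitz dependence of $\ker dF_z$ on $z$ (constant rank, $F$ of class $C^2$) to get $|P_{T_xM}(u)|\le c|x-y|$.

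The difference is in the key step. The paper establishes the stronger condition (i) of \cref{lemma:normalized_projection}, namely $|P_{T_xM}(v)|\le c|x-y||v|$ for all $v\in\partial f(y)$. It does this by invoking the limiting chain rule $\partial f(y)\subseteq dF_y^*\partial g(F(y))\subseteq \Im dF_y^*$ (Rockafellar--Wets, Theorem 10.6). It bounds $d(\Im dF_x^*,\Im dF_y^*)$ with Wedin's constant-rank perturbation estimate, and only then passes to (ii). You go straight to (ii): at a differentiability point $y$, differentiating $f\circ\gamma\equiv f(y)$ along a $C^1$ curve $\gamma$ in the fiber through $y$ gives $\nabla f(y)\perp\ker dF_y$.

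Your argument is more elementary and assumes less. It uses only Fr\'echet differentiability of $f$ at $y$ and constancy of $f$ on the fibers of $F$, so lower semicontinuity of $g$ plays no role. You also never need the constraint qualification attached to that chain rule. That qualification is not automatic here: $\ker dF_y^*=N_{F(y)}V$ is nontrivial whenever $\dim V<m$, and $g$ is merely lower semicontinuous. What the paper's route buys is an estimate over the whole limiting subdifferential, the same kind of statement as in \cref{prop:projection_symmetry}.

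Finally, the submersion normal form you mention is not needed. The intrinsic curve argument plus constant rank of $dF_z$ suffices.
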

\begin{proof}
Let $\psi:V\to \R\p m$ be a chart centered at $F(\overline{x})$. Since $F(x) = (\psi \circ \widetilde F)(x)$ for all $x \in U$, by the chain rule of differential geometry \cite[Proposition 3.6]{lee2012smooth} $dF_x = d\psi_{F(x)} \circ d\widetilde F_x$. As $d\widetilde F_{\overline{x}}$ is surjective, after possibly shrinking $U$, $d\widetilde F_x$ is surjective for all $x \in U$. Thus $M_x = [\widetilde F= \widetilde F(x)]\cap U = [F = F(x)]\cap U$ is an embedded submanifold for all $x\in U$ by 
\cite[Corollary 5.13]{lee2012smooth}
and $T_x M_x = \ker d\widetilde F_x = \ker dF_x$ by \cite[Corollary 5.38]{lee2012smooth}. Since $g$ is positive definite, $M_{\overline{x}} = [f = 0] \cap U = M \cap U$.
A chain rule of variational analysis \cite[Theorem 10.6]{rockafellar2009variational} yields
    $\partial f(x) \subseteq dF_x\p * \partial g(F(x)) \subseteq \Im dF_x\p * = (\ker dF_x)\p \perp = (T_x M_x)\p \perp$ for all $x \in U$.
    Thus $P_{T_xM_x}(v) = 0$ for all $v\in \partial f(x)$ and
\begin{align*}
    |P_{T_xM_x}(v)| & = |P_{T_xM_x}(v)-P_{T_yM_y}(v)| \\
    & \leq |P_{T_xM_x}-P_{T_yM_y}| |v| \\
    & = d(T_x M_x,T_y M_y)|v| \\
    & = d(\ker dF_x,\ker dF_y)|v| \\
    & = d(\im dF_x\p *,\im dF_y\p *)|v| \\
    & \leq C|dF_x\p *-dF_y\p *||v| \\
    & \leq CL|x-y||v|.
\end{align*}
The constant $C>0$ exists because $dF_x\p *$ has constant rank \cite[Section 3]{wedin1972perturbation}. The constant $L>0$ exists because $F$ is $C\p 1$, so one can apply the mean value theorem. One now concludes by \cref{lemma:normalized_projection}.
\end{proof}

\subsection{Conservation, flatness, and symmetry}
\label{subsec:conserved}

In this section, we explore the connection between stability and flatness, then branch out to conservation and symmetry. Let us recall a recently proposed definition of flatness.

\begin{definition}{\normalfont \cite[Definitions 3.1, 3.2]{josz2025flat}}
\label{def:flat}
    Given $f:\R\p n \to \R$, let $\cf:\R\p n \times \R_+ \to \eR$ be defined by
$$\cf(x,r) = \sup_{\overline{B}_r(x)} |f-f(x)|.$$
Consider the preorder on $\R\p n$ defined by
$$x \preceq_f y ~~~ \Longleftrightarrow ~~~  \exists \overline{r}>0: ~ \forall r \in (0,\overline{r}], ~~~ \cf(x,r) \leq \cf(y,r).$$ 
We say $\overline{x}$ is flat if there exists a neighborhood $U$ of $\overline{x}$ in $[f=f(\overline{x})]$ such that $\overline{x}\preceq_f x$ for all $x \in U$. Also, one has the strict preorder $x\prec_f y$ iff $x\preceq_f y$ and $y\npreceq_f x$.
\end{definition}

The following result forges a link between stability and flatness. It is the main result we will be using in the forthcoming numerical experiments. Below, we use regular in the sense of variational analysis \cite[Definition 7.25]{rockafellar2009variational}, which is defined using normal cones and epigraphs, neither of which are needed in this paper so we take a pass (unlike in our previous work \cite[Section 2.4]{josz2025flat}).

\begin{theorem}
\label{thm:implicit_NGD}
Let $f:\R\p n \to \R$ be locally Lipschitz semi-algebraic with $\min f =0$. Let $g:\R\p n \to \eR$ be continuous on its full measure domain with $\min g= 0$ such that $[0<g<\infty]$ is open and $g$ is $C\p {2,2}$ on $[0<g<\infty]$. Suppose 
\begin{enumerate}[label=\rm{(\rm{\roman*})}]
        \item $f+g$ is coercive;
        \item $\forall x\in [0<g<\infty], ~ \forall v \in \overline\nabla f(x), ~ \langle \nabla g(x),v \rangle \geq 0$; \label{item:first_order}
    \item $\forall x\in[f=0]\cap[0<g<\infty],~\forall u\in \widehat\nabla f(x),~\langle \nabla\p 2 g(x) u , u \rangle< 0$.
\end{enumerate}
    Then $\arg\min f+g$ is an almost sure asymptotic 2-attractor for $-\widehat\nabla f$, which is an  asymptotic 2-attractor if $\dom g =\R\p n$. If in addition $g$ is $C\p 4$ on $[0<g<\infty]$, and either
\begin{enumerate}[resume*]
    \item \label{item:reg} $f$ is regular and $\forall x \in [f = 0]\cap [0<g<\infty], ~ \lip f(x) >0$, or
    \item \label{item:D2} $f$ is $D\p 2$ and $\forall x \in [f = 0]\cap [0<g<\infty], ~ \nabla\p 2 f(x) \neq 0$,
\end{enumerate}
then $\arg\min f+g$ contains the flat global minima of $f$.
\end{theorem}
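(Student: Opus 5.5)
The plan is to reduce the first conclusion to \cref{thm:attractor} with $F=-\widehat\nabla f$. \cref{assume:Ff} holds by \cref{prop:normal_assumption}. Conditions (i) and (iii) of \cref{thm:attractor} are among the hypotheses. What remains is to show that $g$ is $2$-d-Lyapunov near every $\overline{x}\in\arg\min f\cap[0<g<\infty]$. For this I would invoke \cref{prop:sufficient_dL2}. $F$ is locally bounded with closed graph by \cref{prop:normal_nonempty}, and $g$ is $C^{2,2}$ near $\overline{x}$ because $[0<g<\infty]$ is open.

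The first-order condition of \cref{prop:sufficient_dL2} asks for $\langle \nabla g(x),u\rangle\le 0$ for all $u\in-\widehat\nabla f(x)$ and $x$ near $\overline{x}$. By hypothesis \ref{item:first_order}, $\langle\nabla g(x),v\rangle\ge0$ for every $v\in\overline\nabla f(x)$. I would pass this to $\widehat\nabla f$ through its definition as the closure of $\widetilde\nabla f$:
\begin{itemize}
\item at points $y\in\Omega$ with $\nabla f(y)\neq0$, the vector $u=\nabla f(y)/|\nabla f(y)|$ is a positive multiple of $\nabla f(y)\in\overline\nabla f(y)$, so $\langle\nabla g(y),u\rangle\ge0$;
\item at constant points $u=0$, and the inequality is trivial.
\end{itemize}
Continuity of $\nabla g$ on the open set $[0<g<\infty]$ then lets us take limits $(y_k,u_k)\to(x,u)$, giving $\langle\nabla g(x),u\rangle\ge0$ for all $u\in\widehat\nabla f(x)$. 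The second-order condition $\sup_{u\in\widehat\nabla f(\overline{x})}\langle\nabla^2g(\overline{x})u,u\rangle<0$ is hypothesis (iii), since the quadratic form is invariant under $u\mapsto-u$. The supremum is attained on the compact set $\widehat\nabla f(\overline{x})$, so the strict inequality survives. \cref{thm:attractor} then yields the almost sure (respectively, deterministic when $\dom g=\R^n$) asymptotic 2-attractor.

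For the flatness claim, take a flat global minimum $\overline{x}$ and suppose, for contradiction, that $\overline{x}\notin\arg\min f+g$. Since $\min(f+g)=0$ would force $f=g=0$ there, we get $g(\overline{x})>0$. If $\min(f+g)>0$, I would first show that $[f=0]\cap[g=0]\neq\emptyset$, or else argue directly with the local structure below. Either way, the aim is to exhibit a point $y\in[f=0]$ arbitrarily close to $\overline{x}$ with $y\prec_f\overline{x}$, contradicting flatness.

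The natural candidate for $y$ comes from the gradient flow of $g$ restricted to $[f=0]$. The hypotheses should force $g$ to be a quantity whose decrease along $[f=0]$ also strictly decreases $\cf(\cdot,r)$ for small $r$. The mechanism is as follows:
\begin{itemize}
\item Condition (iii) makes $g$ strictly concave along the directions $\widehat\nabla f$, which are normal to $[f=0]$.
\item Condition \ref{item:first_order} makes $\nabla g$ positively correlated with $\overline\nabla f$.
\item In case \ref{item:reg}, $\cf(x,r)\sim r\,\lip f(x)$ with $\lip f(x)>0$.
\item In case \ref{item:D2}, $\cf(x,r)\sim \tfrac{r^2}{2}\lambda_1(\nabla^2 f(x))$ with $\nabla^2f(x)\ne0$.
\end{itemize}
So flatness of $\overline{x}$ becomes local minimality of $\lip f$ (respectively $\lambda_1(\nabla^2 f)$) on $[f=0]$. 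I would then need to show that these sharpness measures strictly decrease along the $g$-decreasing direction tangent to $[f=0]$. The plan is to differentiate the identity $\langle\nabla g,\nabla f\rangle\ge0$, which holds with equality on $[f=0]$ in the smooth case, along $[f=0]$. Using the $C^4$ regularity of $g$ to get the third and fourth derivatives needed, this should relate the tangential derivative of the sharpness to $\langle\nabla^2 g\,u,u\rangle<0$. This mirrors the analysis of the gradient dynamics of the regularizer in \cite{josz2025flat}, whose results (Lemma 4.19, Corollary 4.24, Proposition 4.26) I would invoke rather than redo.

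This last step is the main obstacle. It requires translating the variational notion of flatness in \cref{def:flat} into a first-order comparison along $[f=0]$, under only regularity or $D^2$ assumptions on $f$. I would try to quote \cite{josz2025flat} for the characterization of flat minima via $\lip f$ and $\nabla^2 f$, and then supply the curvature computation linking it to $g$.
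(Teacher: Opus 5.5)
Your treatment of the attractor claim is correct and is the paper's route: \cref{prop:normal_assumption}, \cref{thm:attractor} and \cref{prop:sufficient_dL2}, with hypothesis (ii) transferred from $\overline\nabla f$ to $\widehat\nabla f$ by passing to limits in $\gph\widetilde\nabla f$.

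The gap is the flatness claim. The step you call ``the main obstacle'' is the whole content of that claim, and you neither carry it out nor identify what must be checked to import it.

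The paper fixes any $x_0\in[f=0]\cap[0<g<\infty]$ and solves $\dot x=-\nabla g(x)$, $x(0)=x_0$, by Cauchy--Peano. This curve stays in $[f=0]$, since by (ii) and the chain rule $f$ is nonincreasing along it. The paper then applies \cite[Corollary 4.6, Remark 4.7]{josz2025flat}, which under (iv) or (v) gives $x(t)\prec_f x(s)$ for $0\le s<t<T$, so $x_0$ is not flat. The results you name (Lemma 4.19, Corollary 4.24, Proposition 4.26) are the ones used in the matrix factorization discussion of the background section, not this one.

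The real work is verifying the hypothesis of that corollary, the weak form of \cite[Assumption 4.1]{josz2025flat}. It is a uniform curvature bound on a whole neighborhood, off the solution set, for unnormalized Bouligand gradients:
\[
\exists\rho,\omega>0:~\forall x\in B_\rho(x_0)\setminus[f=0],~\forall v\in\overline\nabla f(x),~~~\langle\nabla^2 g(x)v,v\rangle\le-\omega|v|^2 .
\]
Hypothesis (iii) only concerns points of $[f=0]$ and unit vectors of $\widehat\nabla f$, so it must be upgraded in four steps:
\begin{enumerate}
\item Compactness of $\widehat\nabla f(x_0)$ turns (iii) into the bound $-2\omega$ at $x_0$.
\item Local boundedness and closedness of $\gph\widehat\nabla f$ (\cref{prop:normal_nonempty}), with continuity of $\nabla^2 g$, give the bound $-\omega$ for all $x\in B_\rho(x_0)$ and $u\in\widehat\nabla f(x)$.
\item The semi-algebraic Morse--Sard theorem gives $0\notin\overline\nabla f(x)$ on $B_\rho(x_0)\setminus[f=0]$.
\item \cref{lemma:normal_Bouligand}, $\widehat\nabla f(x)=\{v/|v|:v\in\overline\nabla f(x)\}$, converts this into the homogeneous bound displayed above.
\end{enumerate}

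The mechanism you sketch in its place does not cover the theorem's hypotheses. Flatness only implies local minimality of $\lip f$ or $\lambda_1(\nabla^2 f)$ on $[f=0]$. Your route therefore needs strictly smaller values of these quantities at points of $[f=0]$ arbitrarily close to $x_0$, and you propose to get them by differentiating $\langle\nabla g,\nabla f\rangle$ and the sharpness along the flow.
\begin{itemize}
\item Under (iv), $f$ need not be differentiable and $[f=0]$ need not be a manifold. There is then no identity to differentiate and no tangential derivative of $\lip f$.
\item Under (v), $f$ is merely $D^2$, so $\nabla^2 f$ cannot be differentiated and $\lambda_1(\nabla^2 f)$ need not even be continuous.
\end{itemize}
Your computation does go through in special cases, for instance $f=F^2$ with $F$ a $C^2$ submersion. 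There (ii) and (iii) force $\langle\nabla^2F\,\nabla g,\nabla F\rangle>0$ on $[F=0]$, so $|\nabla F|^2$ strictly decreases along the flow. The general statement, however, is obtained by comparing $\cf(\cdot,r)$ itself along the $-\nabla g$ flow as in \cite{josz2025flat}. That comparison is only available once the neighborhood curvature condition above has been established.
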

\begin{proof}
Since $\nabla g$ is continuous on the open set $\dom g$, \ref{item:first_order} is equivalent to 
$$\forall x\in \dom g, ~ \forall u \in \widehat\nabla f(x), ~ \langle \nabla g(x),u \rangle \geq 0.$$
This follows from the definitions of $\overline{\nabla}f$ and $\widehat\nabla f$: ($\Longrightarrow$) Let $\gph \widetilde\nabla f\ni(x_k,u_k)\to (x,u)$. If $x_k\in\Omega$ and $v_k =\nabla f(x_k)\neq 0$, then $u_k = v_k/|v_k|$ and $\overline\nabla f(x_k)=\{v_k\}$. Thus $\langle \nabla g(x_k) , u_k \rangle = \langle \nabla g(x_k),v_k/|v_k|\rangle = \langle \nabla g(x_k),v_k\rangle/|v_k|\geq 0$. Of course, $\langle \nabla g(x_k),u_k\rangle \geq 0$ also holds in the other case, i.e., when $u_k = 0$. Passing to the limit yields $\langle \nabla g(x),u\rangle \geq 0$. ($\Longleftarrow$) Let $\Omega \ni x_k\to x$ be such that $\nabla f(x_k)\to v$. If $v_k =\nabla f(x_k) \neq 0$, then $\widehat\nabla  f(x_k) = \{v_k/|v_k|\}$ and so $\langle \nabla g(x_k),v_k/|v_k| \rangle \geq 0$, i.e., $\langle \nabla g(x_k),v_k \rangle \geq 0$. This also holds if $\nabla f(x_k)=0$ of course, and thus $\langle \nabla g(x), v \rangle \geq 0$. 

Attractiveness is due to \cref{prop:normal_assumption}, \Cref{thm:attractor}, and \cref{prop:sufficient_dL2}. Let $x_0 \in [f = 0]\cap [0<g<\infty]$. By assumption, $\langle \nabla\p 2 g(x_0)u,u\rangle < 0$ for all $u\in \widehat\nabla f(x_0)$. Since $\widehat\nabla f(x_0)$ is compact and does not contain $0$, there exists $\omega>0$ such that for all $u\in \widehat\nabla f(x_0)$, $\langle \nabla\p 2 g(x_0)u,u\rangle \leq -2\omega$. Since $\nabla\p 2 g$ is continuous and $\widehat\nabla f$ is locally bounded with a closed graph by \cref{prop:normal_nonempty}, there exists $\rho>0$ such that
$$\forall x \in B_\rho(x_0),~ \forall u\in \widehat\nabla f(x),~~~ \langle \nabla\p 2 g(x)u,u\rangle \leq -\omega.$$ 
After possibly reducing $\rho$, $f$ has no Clarke critical values reached in $B_\rho(x_0)\setminus[f=0]$, so that $0\notin \overline\nabla f(x)$ for all $x\in B_\rho(x_0)\setminus[f=0]$.
By \cref{lemma:normal_Bouligand}, $\widehat\nabla f(x) = \{v/|v|:v\in\overline\nabla f(x)\}$. Thus
$$\forall x \in B_\rho(x_0)\setminus[f=0],~ \forall u\in \overline\nabla f(x),~~~ \langle \nabla\p 2 g(x)u,u\rangle \leq -\omega |u|\p 2.$$ 
Hence the weaker version of \cite[Assumption 4.1]{josz2025flat} described in \cite[Remark 4.7]{josz2025flat} is satisfied.
As $\nabla g$ is continuous on $B_\rho(x_0)$ after possibly reducing $\rho$, by the Cauchy-Peano theorem \cite[Theorem 1.2]{coddington1955theory} the ordinary differential equation 
$$\left\{
\begin{array}{ccc}
     \dot{x} & = & -\nabla g(x),  \\
     x(0) & = & x_0, 
\end{array}
\right.$$
admits a local solution $x:[0,T)\to B_\rho(x_0)$ where $T>0$. By \cite[Corollary 4.6, Remark 4.7]{josz2025flat}, $x(t) \prec_f x(s)$ for all $0 \leq s < t < T$ if either \ref{item:reg} or \ref{item:D2} holds. Thus $x_0$ is not flat.
\end{proof}

We next explore the connection with conservation, which will become especially evident in the numerical examples. Complementing \cref{prop:dL_necessary}, below is a necessary condition for being a d-Lyapunov function under slightly different assumptions.

\begin{proposition}
    \label{prop:dL_necessary_D1}
    Let $f:\R\p n\to\R$ be locally Lipschitz and $g:\R^n\to\eR$ be d-Lyapunov and differentiable on $U\subseteq\R\p n$. Then
    $$\forall x\in U,~\forall v\in \overline{\partial} f(x), ~~~\langle v,\nabla g(x)\rangle\geq 0.$$
\end{proposition}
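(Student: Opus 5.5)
The plan is to establish the inequality first for $v\in\overline{\nabla} f(x)$ with $v\neq 0$, then pass to the Clarke subdifferential by convexity. Since $\overline{\partial} f(x)=\co\overline{\nabla} f(x)$ for locally Lipschitz $f$, and $v\mapsto\langle v,\nabla g(x)\rangle$ is linear, it suffices to show $\langle w,\nabla g(x)\rangle\geq 0$ for every $w\in\overline{\nabla} f(x)$. The case $w=0$ is trivial, so fix $w\in\overline{\nabla} f(x)\setminus\{0\}$. As in the converse part of \cref{lemma:normal_Bouligand}, there is $\Omega\ni x_k\to x$ with $\nabla f(x_k)\to w$, hence $\nabla f(x_k)\neq 0$ eventually, and $w/|w|\in\widehat\nabla f(x)$. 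Notably, we do not need the hypothesis $0\notin\overline{\nabla}f(x)$ of \cref{lemma:normal_Bouligand}; only this elementary inclusion is used.

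Next I would apply the d-Lyapunov property directly at the point $x\in U$ with the direction $u=-w/|w|\in-\widehat\nabla f(x)$. \cref{def:pdL} gives $\overline{\alpha}>0$ such that $g(x-\alpha w/|w|)\leq g(x)$ for all $\alpha\in(0,\overline{\alpha}]$. Since $g$ is differentiable at $x$ (and finite there, as $g(x+\alpha u)\leq g(x)$ is meaningful), we can divide by $\alpha$ and let $\alpha\searrow 0$:
$$0\geq \lim_{\alpha\searrow 0}\frac{g(x-\alpha w/|w|)-g(x)}{\alpha} = -\langle \nabla g(x), w\rangle/|w|.$$
This yields $\langle w,\nabla g(x)\rangle\geq 0$. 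Taking convex combinations then gives the claim for all $v\in\overline{\partial} f(x)=\co\overline{\nabla} f(x)$.

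The only potential subtlety is the identification $\overline{\partial} f=\co\overline{\nabla} f$ for locally Lipschitz functions, which is Clarke's theorem and was already used in the proof of \cref{lemma:constant}. The other point to check is that the d-Lyapunov property applies at every $x\in U$ and every element of $-\widehat\nabla f(x)$, not merely along trajectories; this holds by \cref{def:pdL} with $F=-\widehat\nabla f$. No semi-algebraicity is needed, which explains the slightly different assumptions compared with \cref{prop:dL_necessary}.
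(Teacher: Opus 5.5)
Your proof is correct and follows the paper's route. You obtain a first-order necessary condition from the d-Lyapunov inequality: the paper imports it from \cite[Proposition 3.2]{josz2025lyapunov}, while you derive it directly via the directional derivative. You then pass from $\widehat\nabla f(x)$ to $\overline{\nabla} f(x)$ by normalization, and convexify using $\overline{\partial} f=\co\overline{\nabla} f$. Your argument uses only the hypothesis-free inclusion $w/|w|\in\widehat\nabla f(x)$ for $w\in\overline{\nabla} f(x)\setminus\{0\}$, which is slightly cleaner than the paper. The paper invokes \cref{lemma:normal_Bouligand} under $0\notin\overline{\nabla} f(x)$ and dismisses the case $0\in\overline{\nabla} f(x)$ as obvious, even though the nonzero elements of $\overline{\nabla} f(x)$ must then be handled exactly as you do.
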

\begin{proof}
        By \cite[Proposition 3.2]{josz2025lyapunov}, for all $x\in U$ and $u \in -\widehat\nabla f(x)$, $\langle\nabla g(x),u\rangle\leq 0$. If $0\notin\overline{\nabla} f(x)$, then $\widehat\nabla f(x) = \{ v/|v|:v\in \overline{\nabla} f(x)\}$ by \cref{lemma:normal_Bouligand}, and so, for all $v \in \overline\nabla f(x)$, $0\leq \langle\nabla g(x),v/|v|\rangle = \langle\nabla g(x),v\rangle/|v|$, i.e., $\langle \nabla g(x),v\rangle \geq 0$. Passing to the convex hull yields $\langle \nabla g(x),v\rangle \geq 0$ for all $v\in\overline{\partial} f(x)=\co\overline{\nabla}f(x)$. This obviously still holds if $0\in\overline\nabla f(x)$. 
\end{proof}

\cref{prop:dL_necessary_D1} suggests that conserved quantities can be a good choice of d-Lyapunov functions. If one can find sufficiently many of them, then one can hope to check for strict optimality in \cref{thm:stable_point}, as follows. 
\begin{proposition}
\label{prop:strict}
    Suppose $f:\R\p n \to \R$ satisfies the normalized projection formula at $\overline{x}\in \R\p n$ along  $[f= f(\overline{x})]$ and $C:\R^n\to\R^m$ is differentiable at $\overline{x}$. If $\overline{x}$ is a local minimum of $f$ and
    $\R^n= \sp \widehat{\nabla} f(\overline{x})+\im C'(\overline{x})^*$,
    then $\overline{x}$ is a strict local minimum of $f+|C-C(\overline{x})|\p 2/4$.
\end{proposition}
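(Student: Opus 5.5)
The plan is to argue by contradiction and reduce strictness to a tangent-direction argument on the level set $M=[f=f(\overline{x})]$. Write $h = |C-C(\overline{x})|\p 2/4$, so that $h\geq 0$ and $h(\overline{x})=0$. Since $\overline{x}$ is a local minimum of $f$, it is already a local minimum of $f+h$. Suppose it is not strict. Then there is a sequence $x_k\to\overline{x}$ with $x_k\neq\overline{x}$ and $f(x_k)+h(x_k)\leq f(\overline{x})$.

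For $k$ large, $f(x_k)\geq f(\overline{x})$ by local minimality, and this forces two things. First, $h(x_k)=0$, that is, $C(x_k)=C(\overline{x})$. Second, $f(x_k)=f(\overline{x})$, that is, $x_k\in M$. Pass to a subsequence so that the unit directions $d_k=(x_k-\overline{x})/|x_k-\overline{x}|$ converge to some $d$ with $|d|=1$.

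I would then show that $d$ is orthogonal to both pieces of the decomposition $\R^n= \sp \widehat{\nabla} f(\overline{x})+\im C'(\overline{x})^*$.

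\emph{Orthogonality to $\im C'(\overline{x})^*$.} Differentiability of $C$ at $\overline{x}$ gives
$$0 = \frac{C(x_k)-C(\overline{x})}{|x_k-\overline{x}|} = C'(\overline{x})d_k + o(1),$$
so $C'(\overline{x})d=0$. Hence $d\in\ker C'(\overline{x}) = (\im C'(\overline{x})^*)\p\perp$.

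\emph{Orthogonality to $\sp\widehat\nabla f(\overline{x})$.} Here I use that the normalized projection formula (\cref{def:normalized_projection_formulae}) presupposes that $M$ is a submanifold near $\overline{x}$. Since $x_k\in M$ and $x_k\to\overline{x}$, the limit direction satisfies $d\in T_{\overline{x}}M$, as in \cite[Example 6.8]{rockafellar2009variational}, the same fact used in \cref{lemma:tangent}. The formula $P_{T_{\overline{x}}M}\widehat\nabla f(\overline{x})\subseteq\{0\}$ says every $u\in\widehat\nabla f(\overline{x})$ lies in $N_{\overline{x}}M=(T_{\overline{x}}M)\p\perp$. Thus $\langle d,u\rangle=0$ for all such $u$, and by linearity $d\perp\sp\widehat\nabla f(\overline{x})$.

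Combining the two, $d$ is orthogonal to $\sp\widehat\nabla f(\overline{x})+\im C'(\overline{x})^* = \R\p n$. So $d=0$, which contradicts $|d|=1$.

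The only delicate point is the tangency step. It relies on reading the hypothesis as including that $[f=f(\overline{x})]$ is an embedded submanifold near $\overline{x}$; if $M$ were merely a set, one would instead have to work with the Bouligand tangent cone. Everything else is a routine first-order computation.
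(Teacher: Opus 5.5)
Your proof is correct and follows the paper's argument essentially verbatim. Both take a sequence in $[f=f(\overline{x})]\cap[C=C(\overline{x})]$ converging to $\overline{x}$, and show its limiting unit direction lies in both $T_{\overline{x}}M\subseteq(\sp\widehat\nabla f(\overline{x}))^{\perp}$ and $\ker C'(\overline{x})=(\im C'(\overline{x})^*)^{\perp}$, which is a contradiction; you additionally spell out why non-strictness forces $f(x_k)=f(\overline{x})$ and $C(x_k)=C(\overline{x})$, a step the paper leaves implicit.
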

\begin{proof}
Let $M = [f= f(\overline{x})]$.
We argue by contradiction and assume there exists $x_k\to\overline{x}$ such that $x_k\neq \overline{x}$, $f(x_k)=f(\overline{x})$, and $C(x_k)=C(\overline{x})$. By taking a subsequence if necessary, $(x_k-\overline{x})/{|x_k-\overline{x}|}\to w \in T_{\overline{x}}M \subseteq (\sp\widehat\nabla f(\overline{x}))\p \perp$. Indeed, $P_{T_{\overline{x}}M} \widehat\nabla f(\overline{x}) = 0$ and so $\sp\widehat\nabla f(\overline{x})\subseteq (T_{\overline{x}}M)\p \perp$. On the other hand,
 \[   C(x_k)=C(\overline{x})+C'(\overline{x})(x_k-\overline{x})+o(|x_k-\overline{x}|)    \]
 Since $C(x_k)=C(\overline{x})$, dividing both sides by $|x_k-\overline{x}|$ and taking the limit yields $C'(\overline{x})w=0$. Hence $w\in \ker C'(\overline x)=(\im C'(\overline{x})^*)^\perp$. Together, this yields the contradiction $w\in (\sp\widehat\nabla f(\overline{x}))\p \perp\cap(\im C'(\overline{x})^*)^\perp = (\sp\widehat\nabla f(\overline{x})+\im C'(\overline{x})^*)^\perp=(\R^n)^\perp=\{0\}$.
\end{proof}

\cref{prop:strict} suggests that finding a maximal number of independent conserved quantities \cite[Definition 2.18]{marcotte2023abide} can aid in establishing d-stability. If $C$ is $C\p \infty$, independence simply means that $C'(x)\p *$ is injective. The conserved quantities of normalized subdifferential dynamics and Clarke subdifferential dynamics are the same, expect for trajectories passing through saddle points.

\begin{proposition}
    \label{prop:conserved_normalized}
    Let $f:\mathbb{R}^n \rightarrow \mathbb{R}$ be locally Lipschitz semi-algebraic and $c:\R\p n\to \eR$. The following are equivalent:
    \begin{enumerate}[label=\rm{(\roman{*})}]
        \item For any trajectory $x:[0,T)\to\R\p n$, $T\in(0,\infty]$, of $-\co\widehat\nabla f$ devoid of saddle points of $f$, $c\circ x$ is constant.
        \item For any trajectory $x:[0,T)\to\R\p n$, $T\in(0,\infty]$, of $-\overline{\partial} f$ devoid of saddle points of $f$, $c\circ x$ is constant.
    \end{enumerate}
\end{proposition}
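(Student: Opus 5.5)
The plan is to reduce both directions to \cref{lemma:reparametrization}, which says that away from Clarke critical points the trajectories of $-\co\widehat\nabla f$ and $-\overline\partial f$ coincide up to reparametrization. Constancy of $c\circ x$ is invariant under reparametrization by an increasing homeomorphism, since the image $c(x(I))$ is unchanged. The remaining work is therefore to handle the times at which a trajectory hits a Clarke critical point. By hypothesis these are not saddle points, so they are local minima or local maxima.

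\textbf{Step 1 (critical points along trajectories).} Let $x$ be a trajectory of either dynamics, and let $t_0$ be such that $0\in\overline\partial f(x(t_0))$ with $x(t_0)$ a local extremum. Along both dynamics $f\circ x$ is decreasing: for $-\co\widehat\nabla f$ this is \cref{cor:descent}, and for $-\overline\partial f$ it is the chain rule of the conservative field. Suppose $x(t_0)$ is a local minimum. For $t\ge t_0$ near $t_0$ we have $f(x(t))\le f(x(t_0))\le f(x(t))$, so $f\circ x$ is constant after $t_0$ locally.
\begin{itemize}
\item For $-\co\widehat\nabla f$, \cref{lemma:constant_trajectory} gives that $x$ is constant on such an interval.
\item For $-\overline\partial f$, the descent $(f\circ x)'=-|x'|^2$ (the minimal-norm selection, by the chain rule) also forces $x$ to be constant.
\end{itemize}
A connectedness argument then shows that once a trajectory reaches a local minimum it stays there. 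Suppose instead $x(t_0)$ is a local maximum. For $t\le t_0$ near $t_0$ we get $f(x(t))\ge f(x(t_0))\ge f(x(t))$, so $x$ is constant before $t_0$, and hence on $[0,t_0]$. Consequently the set of times where $x(t)$ is Clarke critical is a union of at most an initial interval and a terminal interval, on each of which $x$ is constant. On the open middle interval, $x$ avoids Clarke critical points.

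\textbf{Step 2 (transfer).} Assume (i) and let $x$ be a trajectory of $-\overline\partial f$ avoiding saddles. On the middle interval $(a,b)$, take any compact subinterval $[a',b']$. By \cref{lemma:reparametrization}, $x|_{[a',b']}$ reparametrized is a trajectory of $-\co\widehat\nabla f$, which extends to a trajectory on some $[0,T)$ with the same image portion, still devoid of saddles. Then (i) gives that $c$ is constant on $x([a',b'])$. Exhausting $(a,b)$ by compact subintervals, $c$ is constant on $x((a,b))$. On the initial and terminal intervals $x$ is constant, and the values agree at the junctions by continuity of $x$, since the endpoint points coincide with $x(a)$ and $x(b)$. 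Thus $c\circ x$ is constant, and no continuity of $c$ is needed because the points themselves are equal. The converse (ii)$\Rightarrow$(i) is symmetric, using the other direction of \cref{lemma:reparametrization}.

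\textbf{Main obstacle.} The delicate part is Step 1. It requires justifying that constancy of $f\circ x$ forces constancy of $x$ for $-\overline\partial f$ (and, for the other dynamics, via \cref{lemma:constant_trajectory}), and arguing that critical times form only end intervals. A related subtlety is that when $x$ is constant at a local maximum on an initial segment, the sign pattern can change, but $x$ leaves the point only at the endpoint of that segment. I also need to check that $0\in\co\widehat\nabla f\Rightarrow 0\in\overline\partial f$ (\cref{prop:critical}), so that noncritical for Clarke matches the hypotheses of \cref{lemma:reparametrization}.
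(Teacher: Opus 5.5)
Your decomposition matches the paper's one-line proof: split off the constant critical pieces with \cref{lemma:constant_trajectory} and transfer on the noncritical middle with \cref{lemma:reparametrization}. The gluing step in Step~2 is wrong, though. Constancy on compact subintervals of $(a,b)$ gives $c\equiv\kappa$ only on $x((a,b))$. The junction points $x(a)$ and $x(b)$ are Clarke critical, so they do not lie in $x((a,b))$. Hence ``the points themselves are equal'' says nothing about $c(x(a))$ or $c(x(b))$, and without continuity of $c$ you cannot conclude.

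In the direction (i)$\Rightarrow$(ii) you can repair this:
\begin{enumerate}
\item Reparametrize the whole middle piece at once. The new time $\int ds/\gamma$ is finite because $1/\gamma$ is bounded by a Lipschitz constant of $f$. The new curve has speed at most $1$, so it extends continuously to $x(a)$ and $x(b)$.
\item Concatenate with constant pieces. These are admissible for $-\co\widehat\nabla f$ because $0\in\co\widehat\nabla f$ at local extrema, by \cref{prop:Fermat}.
\item Apply (i) to this single trajectory, which passes through $x(a)$, the middle, and $x(b)$. This gives $c(x(a))=\kappa=c(x(b))$.
\end{enumerate}

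In the direction (ii)$\Rightarrow$(i) no such repair exists. The inverse time change $\int\gamma$, with $\gamma$ only bounded by $|\overline{\nabla} f(x(t))|_-^{-1}$, can diverge as $x$ approaches the critical point. Then no trajectory of $-\overline{\partial} f$ reaches that point, and the implication is actually false for general $c$:
\begin{itemize}
\item Take $n=1$, $f(x)=x^2$ and $c=\chi_{\R\setminus\{0\}}$.
\item The trajectories of $-\overline{\partial} f$ are $x_0e^{-2t}$, which never change the value of $c$, so (ii) holds.
\item The curve $x(t)=\max\{1-t,0\}$ is a trajectory of $-\co\widehat\nabla f$ on $[0,\infty)$, using $\co\widehat\nabla f(0)=[-1,1]$. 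It meets only noncritical points and the global minimum, yet $c\circ x$ jumps from $1$ to $0$.
\item The same failure occurs at a local maximum under your reading of ``saddle'': take $f(x)=-x^2$, $x(t)=t$ and $c=\chi_{(0,\infty)}$.
\end{itemize}

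So the equivalence needs $c$ to be continuous, at least at the critical points the trajectories reach. The paper's one-line proof does not address this either. With that assumption your junction step becomes the limit $c(x(b))=\lim_{t\uparrow b}c(x(t))=\kappa$, and the rest of your plan goes through.
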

\begin{proof}
    This follows from \cref{lemma:reparametrization} and \cref{lemma:constant_trajectory}.
\end{proof}

Conserved quantities can be constructed using linear symmetries \cite[Proposition 5.1]{zhao2023symmetries} \cite[Corollary 5.1]{josz2025subdifferentiation}.
The set of invertible matrices with real coefficients of order $n$, denoted  $\GL(n)$, is a Lie group. The natural action of a Lie subgroup $G$ of $\GL(n)$ on $\R\p n$ is defined by the matrix vector multiplication $G\times \R\p n \ni (g,x)\mapsto gx\in \R\p n$. We let $\fg$ denote the Lie algebra of $G$ and let $\mathrm{s}(\fg)$ be its symmetric elements.

\begin{proposition}
{\normalfont \cite[Proposition 4.9]{josz2025flat}}
\label{prop:conserved}
Let $f:\mathbb{R}^n\rightarrow \mathbb{R}$ be locally Lipschitz and invariant under the natural action of a Lie subgroup $G$ of $\GL(n)$, $C(x) = P_{\mathrm{s}(\mathfrak{g})}(xx^T)$, and $\overline{x} \in \R\p n$.
Let $c(x) = \|C(x)-C(\overline{x})\|_F\p 2/4$. Then 
    $$\forall x \in \R\p n,~ \forall v \in \overline{\partial} f(x), ~~~  \langle \nabla c(x), v \rangle = 0 ~~~\text{and}~~~ \langle \nabla\p 2 c(x) v , v \rangle  = \langle C(x) - C(\overline{x}) , C(v) \rangle_F.$$
\end{proposition}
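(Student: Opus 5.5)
We first use the invariance of $f$ to obtain an orthogonality relation, and then compute $\nabla c$ and $\nabla\p 2 c$ directly from the quadratic map $C$.

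\textbf{Invariance step.} Since $f(e\p{tA}x)=f(x)$ for every $A\in\fg$ and $t\in\R$, at every differentiability point $x\in\Omega$ we get $\langle \nabla f(x),Ax\rangle = 0$. This can be rewritten as
$$\langle \nabla f(x)x\p T,A\rangle_F=0.$$
Passing to limits $\Omega\ni x_k\to x$ extends this to every $v\in\overline\nabla f(x)$. Convexity then extends it to $v\in\overline\partial f(x)=\co\overline\nabla f(x)$, since $v\mapsto \langle v x\p T,A\rangle_F$ is linear. We therefore have
$$P_{\fg}(vx\p T)=0 \quad\text{for all } v\in \overline\partial f(x).$$
We will also need the identity $\langle S,M\rangle_F=\langle S,(M+M\p T)/2\rangle_F$ for symmetric $S$. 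Combined with the fact that $P_{\mathrm{s}(\fg)}$ restricted to symmetric matrices is the orthogonal projection onto the subspace $\mathrm{s}(\fg)=\fg\cap\mathrm{Sym}$, this gives, for $S\in\mathrm{s}(\fg)$,
$$\langle S, vx\p T\rangle_F=\langle S,\tfrac12(vx\p T+xv\p T)\rangle_F.$$

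\textbf{First identity.} Write $D=C(x)-C(\overline{x})\in\mathrm{s}(\fg)$. Differentiating $C(x)=P_{\mathrm{s}(\fg)}(xx\p T)$ in a direction $h$ gives $C'(x)h=P_{\mathrm{s}(\fg)}(hx\p T+xh\p T)$. Hence
$$\langle\nabla c(x),h\rangle=\tfrac12\langle D,C'(x)h\rangle_F=\tfrac12\langle D,hx\p T+xh\p T\rangle_F=\langle D,hx\p T\rangle_F,$$
where the last equality uses that $D$ is symmetric. Taking $h=v\in\overline\partial f(x)$ and using $D\in\mathrm{s}(\fg)\subseteq\fg$ together with $P_{\fg}(vx\p T)=0$ yields $\langle \nabla c(x),v\rangle=0$.

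\textbf{Second identity.} Differentiating $\langle\nabla c(x),h\rangle=\langle C(x)-C(\overline x),hx\p T\rangle_F$ once more in direction $h$ gives
$$\langle\nabla\p 2c(x)h,h\rangle=\langle C'(x)h,hx\p T\rangle_F+\langle D,hh\p T\rangle_F.$$
The second term equals $\langle D,P_{\mathrm{s}(\fg)}(hh\p T)\rangle_F=\langle D,C(h)\rangle_F$, because $D\in\mathrm{s}(\fg)$ and $hh\p T$ is symmetric. For the first term, $C'(x)h\in\mathrm{s}(\fg)\subseteq\fg$, and $P_{\fg}(vx\p T)=0$ when $h=v$, so it vanishes. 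This leaves exactly $\langle C(x)-C(\overline x),C(v)\rangle_F$.

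\textbf{Main obstacle.} The delicate point is justifying that $P_{\mathrm{s}(\fg)}$ acts as an orthogonal projection compatible with the pairing against $\fg$. That is, we need that for $S\in\mathrm{s}(\fg)$ and any $M$ with $P_{\fg}(M)=0$, one has $\langle S,M\rangle_F=0$. This holds because $\mathrm{s}(\fg)\subseteq\fg$ and $M\perp\fg$. The remaining subtlety is extending the invariance relation from differentiability points to the whole Clarke subdifferential, which is handled by the limit and convexity argument in the invariance step.
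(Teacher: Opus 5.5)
Your proof is correct and complete. The paper gives no argument of its own for this statement; it imports it from \cite[Proposition 4.9]{josz2025flat}. Your computation is the natural one: it amounts to $\nabla c(x) = (C(x)-C(\overline{x}))x$ and $\nabla^2 c(x)h = (C'(x)h)x + (C(x)-C(\overline{x}))h$. Since $C(x)-C(\overline{x})$ and $C'(x)h$ lie in $\mathrm{s}(\fg)\subseteq\fg$, both identities reduce to the single orthogonality relation $v\perp \fg x = T_x(Gx)$ for $v\in\overline{\partial} f(x)$.

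Within the paper's toolkit, that orthogonality would come from the case $y=x$ of the orbital projection formula \cite[Proposition 3.6]{josz2025subdifferentiation}, which is invoked in \cref{prop:projection_symmetry}. You obtain it more elementarily: you differentiate $t\mapsto f(e^{tA}x)$ at differentiability points, pass to Bouligand limits, and convexify through $\overline{\partial} f=\co\overline{\nabla} f$. That is all that local Lipschitz continuity requires.

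One point is worth making explicit: the argument genuinely needs $\mathrm{s}(\fg)\subseteq\fg$. In other words, ``symmetric elements'' must be read as $\fg\cap\mathrm{Sym}(n)$, which is the paper's meaning, rather than as symmetric parts of elements of $\fg$. With that reading, the step you flag as the main obstacle is immediate.
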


In \cite[Theorem 6.3]{josz2025subdifferentiation}, we proposed using such a quadratic conserved quantity $C(x)$ to detect instability in the subgradient method with constant step size. We later defined $c(x)$ to analyze flatness \cite{josz2025flat}. On a final note, it is sometimes helpful to do a change of variables when studying stability. Let $\O(n) = \{Q\in\R\p{n\times n}: Q\p T Q = I_n\}$ denote the orthogonal group.

\begin{proposition}
    \label{prop:change}
    Let $U\in \O(n)$ and $f,\widetilde f:\R\p n\to \R$ be such that $\widetilde f(x) = f(Ux)$ for all $x\in\R\p n$. Let $\{x_k\}_{k\in\N}\subseteq\R\p n$ and $\widetilde x_k = U\p T x_k$ for all $k\in\N$. Then
    \begin{enumerate}[label=\rm{(\rm{\roman*})}]
        \item $x$ is a flat minimum of $f$ $~~~\Longleftrightarrow~~~$ $U\p T x$ is a flat minimum of $\widetilde{f}$; \label{item:flat}
        \item $\forall x\in\R\p n,~ \widehat\nabla \widetilde f(x) = U\p T \widehat\nabla f(Ux)$; \label{item:normal_unitary}
        \item $\forall k\in\N,~~~ x_{k+1} \in x_k - \alpha_k\widehat\nabla f(x_k) ~~~\Longleftrightarrow ~~~ \widetilde x_{k+1} \in \widetilde x_k - \alpha_k\widehat\nabla \widetilde f(\widetilde x_k)$. \label{item:update}
    \end{enumerate}
\end{proposition}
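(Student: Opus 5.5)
The plan is to prove \ref{item:normal_unitary} first, since \ref{item:update} follows from it directly and \ref{item:flat} is independent of both. Throughout, we use that $x\mapsto Ux$ is a linear isometry of $\R\p n$, so it maps balls to balls of the same radius, preserves open sets, and preserves limits.

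For \ref{item:normal_unitary}, we first establish the identity at the level of $\widetilde\nabla$, that is, $\widetilde\nabla\widetilde f(x)=U\p T\widetilde\nabla f(Ux)$ for every $x$, and then pass to closures. There are three cases in the definition of $\widetilde\nabla$.
\begin{itemize}
\item If $f$ is differentiable at $Ux$, then by the chain rule $\widetilde f$ is differentiable at $x$ with $\nabla\widetilde f(x)=U\p T\nabla f(Ux)$. The converse holds by applying the same argument to $f(y)=\widetilde f(U\p Ty)$. Since $|U\p T v|=|v|$, we get $\nabla\widetilde f(x)\neq 0$ iff $\nabla f(Ux)\neq0$, and then $\nabla\widetilde f(x)/|\nabla\widetilde f(x)|=U\p T\nabla f(Ux)/|\nabla f(Ux)|$.
\item $\widetilde f$ is constant near $x$ iff $f$ is constant near $Ux$, because $U$ maps neighborhoods of $x$ onto neighborhoods of $Ux$. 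In that case both sides equal $\{0\}=U\p T\{0\}$.
\item In the remaining case both sides are empty.
\end{itemize}
Taking closures of graphs, $(x,u)\in\gph\widehat\nabla\widetilde f$ iff there is $(x_k,u_k)\in\gph\widetilde\nabla\widetilde f$ converging to $(x,u)$. This holds iff $(Ux_k,Uu_k)\in\gph\widetilde\nabla f$ converges to $(Ux,Uu)$, that is, iff $Uu\in\widehat\nabla f(Ux)$. This gives \ref{item:normal_unitary}.

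For \ref{item:update}, multiply the inclusion $x_{k+1}\in x_k-\alpha_k\widehat\nabla f(x_k)$ by $U\p T$. This gives $\widetilde x_{k+1}\in\widetilde x_k-\alpha_kU\p T\widehat\nabla f(U\widetilde x_k)$, and the right-hand side equals $\widetilde x_k-\alpha_k\widehat\nabla\widetilde f(\widetilde x_k)$ by \ref{item:normal_unitary}. The converse follows by multiplying by $U$.

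For \ref{item:flat}, we check directly from \cref{def:flat} that
$$\cf_{\widetilde f}(U\p T x,r)=\sup_{\overline B_r(U\p Tx)}|\widetilde f-\widetilde f(U\p Tx)|=\sup_{y\in\overline B_r(x)}|f(y)-f(x)|=\cf_f(x,r),$$
where the middle equality uses the substitution $y=Uz$ and $U\overline B_r(U\p Tx)=\overline B_r(x)$. It follows that $x\preceq_f y$ iff $U\p Tx\preceq_{\widetilde f}U\p Ty$. Moreover, $U\p T$ maps $[f=f(x)]$ homeomorphically onto $[\widetilde f=\widetilde f(U\p Tx)]$, so it carries neighborhoods of $x$ within the level set to neighborhoods of $U\p Tx$ within the corresponding level set. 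Local minimality is also preserved. Together these give the equivalence of flat minima.

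None of the steps is a real obstacle. The only point that needs care is the closure step in \ref{item:normal_unitary}, where the argument must be made at the level of graphs so that it is valid for every $x$, and not only at differentiability points.
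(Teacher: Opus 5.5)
Your proposal is correct and follows the paper's proof essentially step for step. For (ii) you check the three cases of $\widetilde\nabla$ and then take closures, for (iii) you conjugate the inclusion by $U^T$, and for (i) you use $\accentset{\circ}{\widetilde f}(U^Tx,r)=\cf(x,r)$ together with the fact that $U^T$ maps level sets of $f$ homeomorphically onto those of $\widetilde f$; your graph-level closure argument just makes explicit what the paper leaves as ``taking the closure.''
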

\begin{proof}
    \ref{item:flat} For all $x\in\R\p n$ and $r\in \R_+$, 
    $$\accentset{\circ}{\widetilde f}(x,r) = \sup_{y\in B_r(x)} |\widetilde f(y)-\widetilde f(x)| = \sup_{Uy\in B_r(Ux)} |f(Uy)- f(Ux)|= \cf (Ux,r).$$
    Thus the partial order is preserved, i.e., $x\preceq_f y \Longleftrightarrow U\p Tx \preceq_{\widetilde f} U\p T y$. Since $U\p T$ is invertible and thus homeomorphic, the desired result follows. 
    
    \ref{item:normal_unitary} $\widetilde f$ is differentiable at $x$ with $\nabla\widetilde f(x)\neq 0$ iff $f$ is differentiable at $Ux$ with $\nabla f(Ux)\neq 0$, in which case $|\nabla \widetilde f(x)| = |U\p T\nabla f(Ux)|=|\nabla f(Ux)|$ so $\widetilde \nabla \widetilde f(x) = U\p T \widetilde \nabla f(Ux)=\nabla f(Ux)/|\nabla f(Ux)|$. $\widetilde f$ is constant near $x$ iff $f$ is constant near $Ux$, in which case 
    $\widetilde \nabla \widetilde f(x) = U\p T \widetilde \nabla f(Ux)= \{0\}$. Hence $\widetilde\nabla \widetilde f(x) = U\p T \widetilde\nabla f(Ux)$ for all $x\in\R\p n$, and one concludes by taking the closure.
    
    \ref{item:update} We have $x_{k+1} \in x_k - \alpha_k\widehat\nabla f(x_k)$ iff $U\p T x_{k+1} \in U\p T x_k - \alpha_kU\p T \widehat\nabla f(UU\p Tx_k)$ iff $U\p T x_{k+1} \in U\p T x_k - \alpha_k \widehat\nabla\widetilde f(U\p Tx_k)$ iff $\widetilde x_{k+1} \in \widetilde x_k - \alpha_k\widehat\nabla \widetilde f(\widetilde x_k)$.
\end{proof}

\section{Examples}
\label{sec:Examples}


Some examples are in order. We respectively define the sign and Bouligand sign of $x\in\R$ by
\begin{equation*}
    \sign(x)=\left\{\begin{array}{cl}
        x/|x| & \text{if}~ x\neq 0, \\
        \left[-1,1\right] & \text{if}~ x=0,
    \end{array}\right.
    ~~~\text{and}~~~
        \sgn(x)=\left\{\begin{array}{cl}
        x/|x| & \text{if}~ x\neq 0, \\
        \{-1,1\} & \text{if}~ x=0.
    \end{array}\right.
\end{equation*}

\begin{example}
\label{eg:y4}
    The flat minimum $(0,0)$ of $f(x,y) = y\p 2 + x\p 2 y\p 4$ is an asymptotic 4-attractor.
\end{example}
\begin{proof}
Compute $\nabla f(x,y) = 2(xy\p 4,y+2x\p 2 y\p 3)$ and $|\nabla f(x,y)| = 2|y||(xy\p 3,1+2x\p 2 y\p 2)|$. As $d((x,y),[f=0]) = |y| \leq |\nabla f(x,y)|/2$, $\nabla f$ is metrically 1-subregular. Since $$P_{\sp\{(1,0)\}}(\nabla f(x,y)) = xy\p 4 = O(y\p 2) = O(|y||\nabla f(x,y)|) = O(d((x,y),[f=0])|\nabla f(x,y)|),$$ $f$ satisfies the normalized perturbed projection formula at any global minimum along $[f=0]$ by \cref{lemma:normalized_projection}. Moreover, let $g(x,y)=|x|$ and observe that 
    \begin{align*}
        g(x\p +,y\p +) & = |x\p +| \\ 
        & = |x-\alpha xy\p 4/|(xy\p 4, y+2x\p 2 y\p 3)|| \\
        & = |x|(1-\alpha y\p 4/|(xy\p 4, y+2x\p 2 y\p 3)|) \\
        & = g(x,y)(1 - \alpha |y|\p 3/|(xy\p 3, 1+2x\p 2 y\p 2)|) \\
        & \leq g(x,y)(1 - \alpha|\nabla f(x,y)|\p 3/2) \\
        & \leq g(x,y)-g(x,y)|\nabla f(x,y)|\p 3\alpha/2
    \end{align*}
near any point in $[f=0]$. By \cref{prop:p2dL}, $g$ is $(4,2)$-d-Lyapunov near any point in $[f=0]\cap [g>0]$. What's more, $f(x,y)+g(x,y) = y\p 2 + x\p 2 y\p 4 + |x|\geq |x|+|y|$ whenever $|y|\geq 1$, so $f+g$ is coercive. By \cref{thm:stable_set}, $\arg\min f+g = \{(0,0)\}$ is an asymptotic 4-attractor. As for why the origin is the sole flat minimum, see \cite[Example 5.3]{josz2025flat}. 
\end{proof}

In \cref{eg:y4}, $\lambda_1(\nabla\p 2 f(x,0))=0$ for all $x \in \R$, so this quantity is not sufficient to explain implicit regularization.

\begin{figure}[H]
\centering
\begin{subfigure}{.49\textwidth}
  \centering
  \includegraphics[width=1\textwidth]{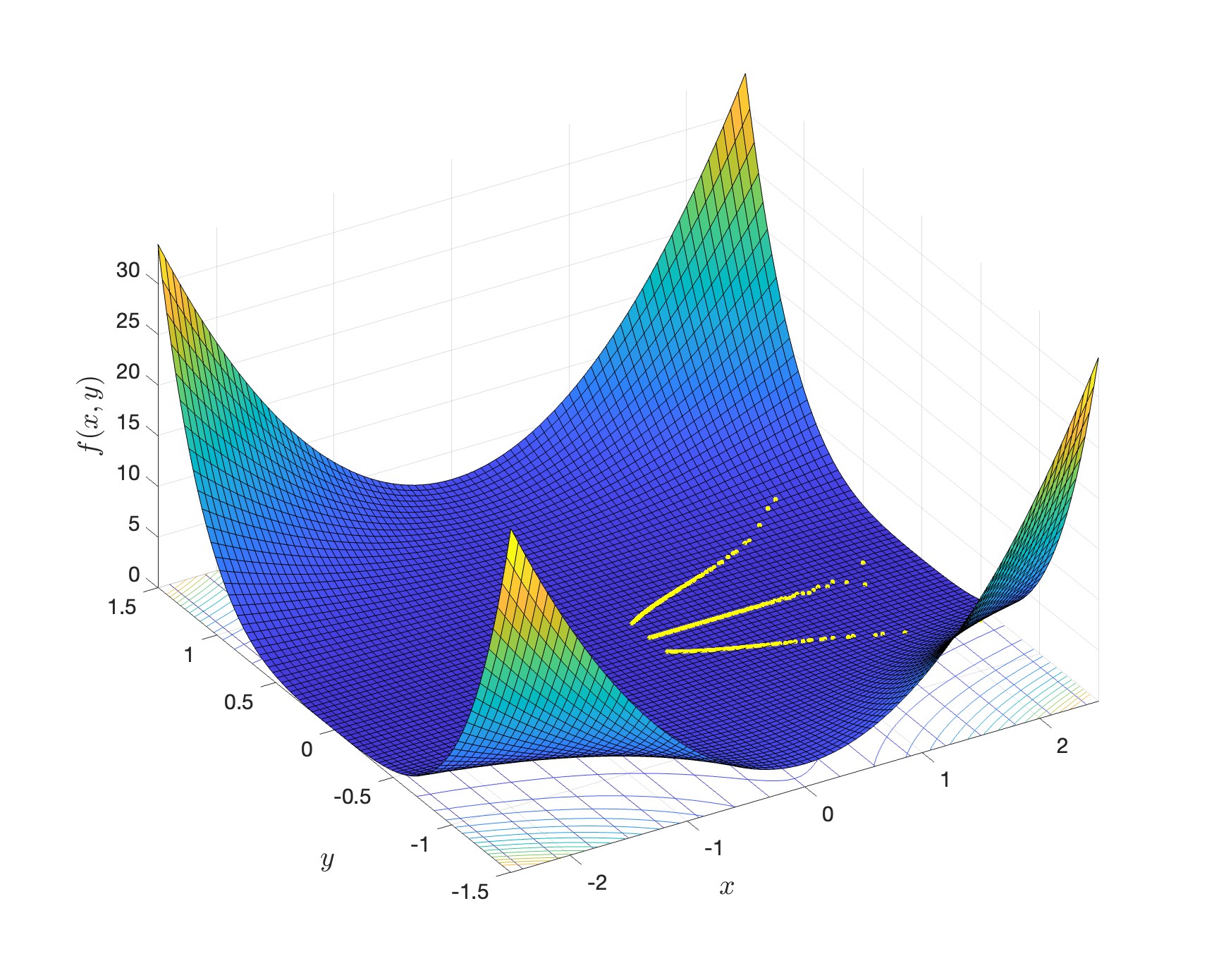}
  \caption{$f(x,y)=y\p 2+x\p 2 y \p 4$.}
  \label{fig:fourth_power}
\end{subfigure}
\begin{subfigure}{.49\textwidth}
\centering
  \includegraphics[width=.8\textwidth]{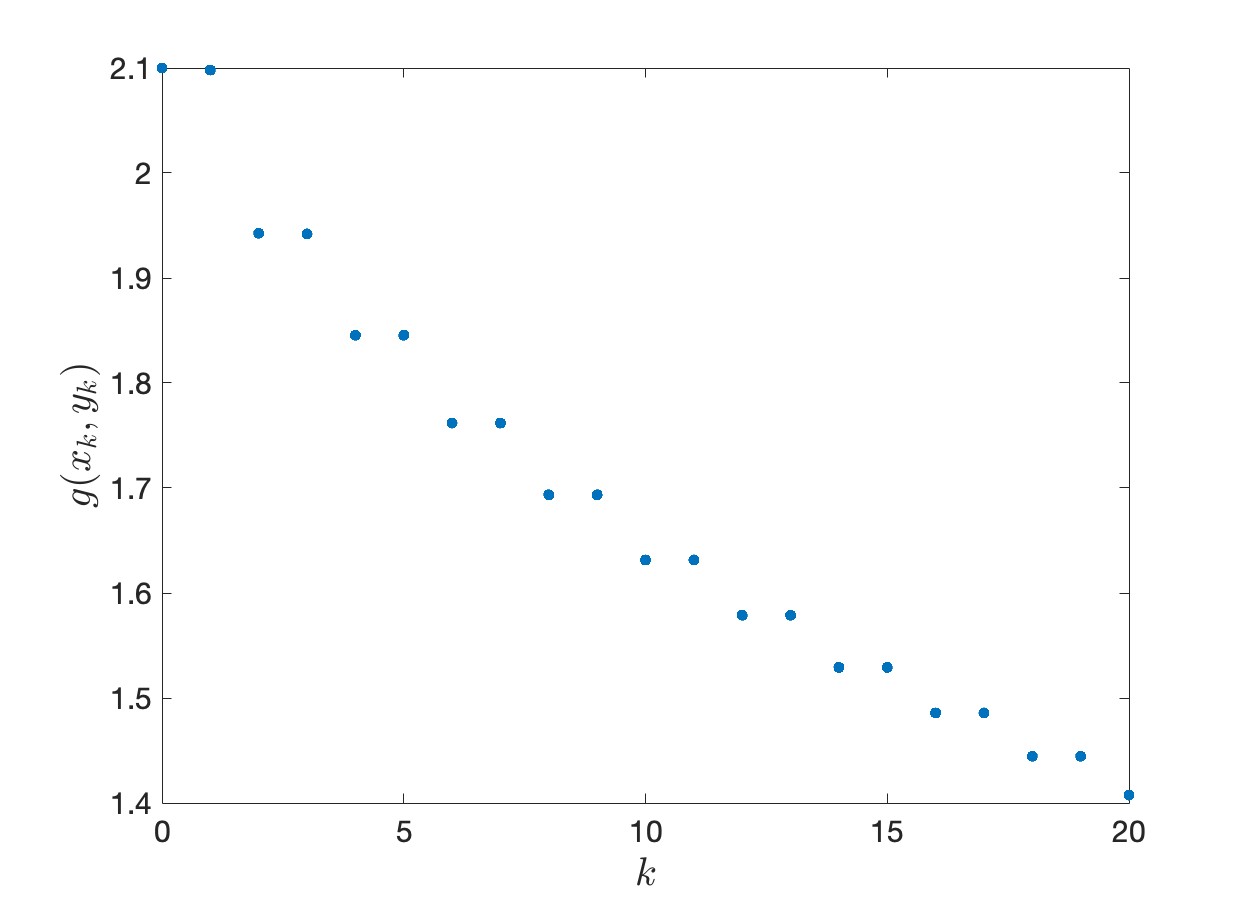}
  \caption{$g(x,y)=|x|$.}
  \label{fig:fourth_power_g}
\end{subfigure}
\caption{Normalized gradient descent with step size $1/(k+1)\p {1/4}$.}
\end{figure}

\begin{example}[Parabola]
    \label{eg:parabola}
    The flat minimum $(0,0)$ of $f(x,y) = (x \p 2 - y)\p 2$ is an asymptotic 2-attractor.
\end{example} 
\begin{proof}
Compute 
$$\nabla f(x,y) = 2(x\p 2 - y) \begin{pmatrix}
    2x \\ - 1
\end{pmatrix} ~~~\text{and}~~~ \widehat \nabla f(x,y) = \frac{\sgn(x\p 2 - y)}{\sqrt{4x\p 2 +1}} \begin{pmatrix}
    2x \\ - 1
\end{pmatrix}.$$
Observe that $f=F^2$ where $F(x,y)=x^2-y$. Thus $\lambda_1(\nabla^2f(x,y))=2|F'(x,y)|^2=8x^2+2$ whenever $F(x,y)=0$ by \cite[Fact 3.26]{josz2025lyapunov}. 
The origin is the unique minimum of $\lambda_1(\nabla^2f)$ on $[f=0]$, which implies that it is the sole flat minimum of $f$ by \cite[Corollary 3.19]{josz2025flat}.
In order to find a conserved quantity, consider the system 
\begin{equation*}
    \left\{
    \begin{array}{rcc}
    \dot{x} & = & 2x, \\
    \dot{y} & = & -1.
    \end{array}
    \right.
\end{equation*}
It admits the unique solution $(x(t),y(t)) = (x(0)e\p{2t},y(0)-t)$, which satisfies $x(t) = x(0)e\p{2y(0) - 2y(t)}$, i.e., $x(t)e\p {2y(t)} = x(0)e\p{2y(0)}$. Thus let $g:\R\p 2\to \R$ be defined by $$g(x,y) = x\p 2 e \p {4y}.$$ 
Since
\begin{equation*}
    \nabla g(x,y) =  2\begin{pmatrix}
        x e\p {4y} \\
        2 x\p 2 e\p {4y}
    \end{pmatrix} ~~~\text{and}~~~
        \nabla\p 2 g(x,y) =  2 \begin{pmatrix}
        e\p {4y} & 4x e\p {4y} \\
        4x e\p {4y} & 8x\p 2 e\p {4y}
    \end{pmatrix},
\end{equation*}
we have $\langle \nabla g(x,y) , \nabla f(x,y) \rangle = 0$ for all $x,y\in \R$ and
\begin{equation*}
\forall (x,y) \in [f=0]\cap[g>0],~\forall u\in \widehat\nabla f(x,y), ~~~ \langle \nabla\p 2 g(x,y) u , u \rangle  = -\frac{8x\p 2e\p {4y}}{4x\p 2 +1}< 0.
\end{equation*}
Moreover, for all $x,y\in \R$,
\[   f(x,y)+g(x,y)=(x^2-y)^2+x^2e^{4y}\geq (x^2-y)^2+x^2(1+4y)=(x^2+y)^2+x^2. \]
Hence $f+g$ is coercive. By \cref{thm:implicit_NGD}, $\arg\min f+g = \{(0,0)\}$ is an asymptotic 2-attractor. 
\end{proof}


\begin{figure}[H]
\centering
\begin{subfigure}{.49\textwidth}
  \centering
  \includegraphics[width=1\textwidth]{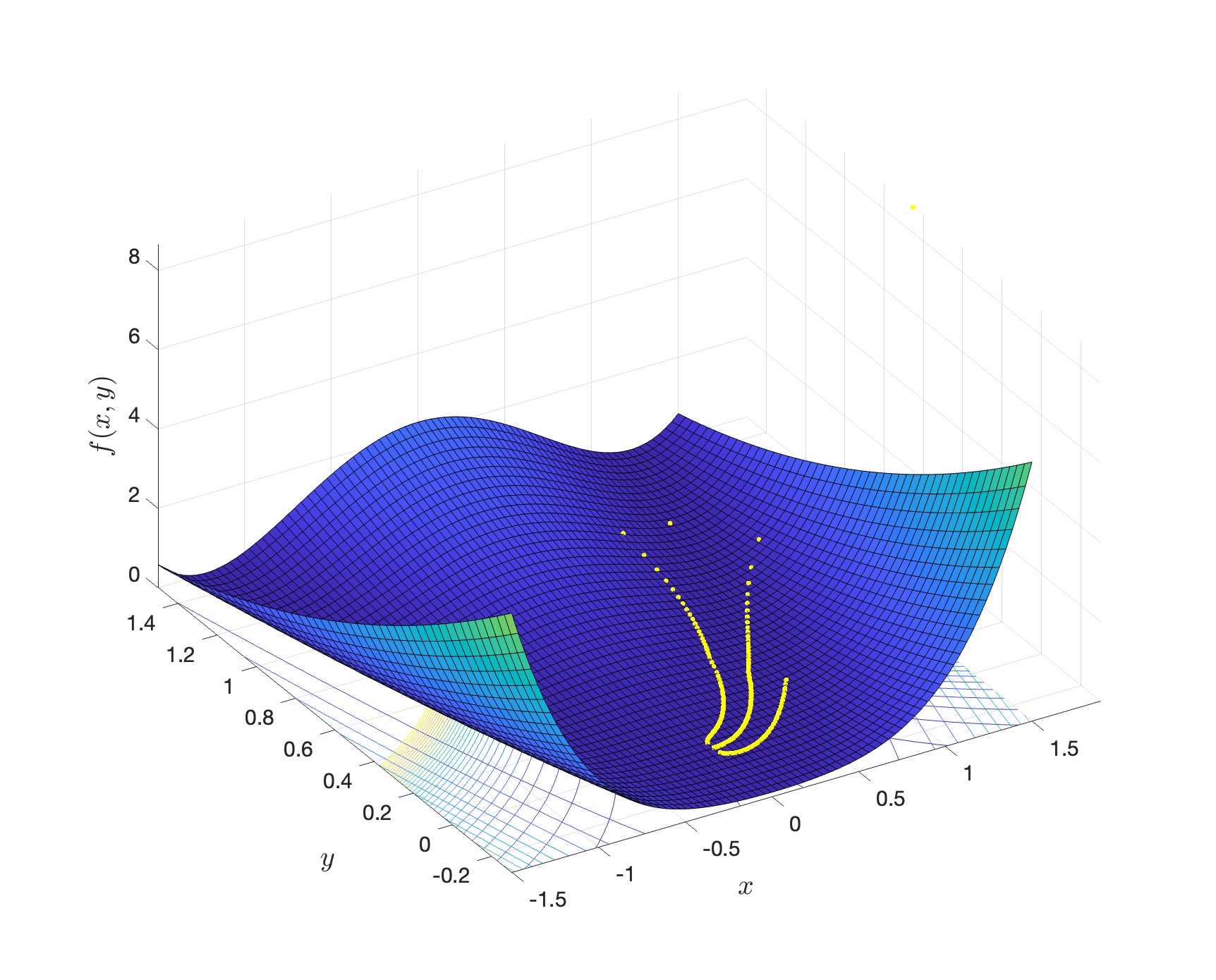}
  \caption{$f(x,y)=(x^2-y)\p 2$.}
  \label{fig:parabola}
\end{subfigure}
\begin{subfigure}{.49\textwidth}
\centering
  \includegraphics[width=.8\textwidth]{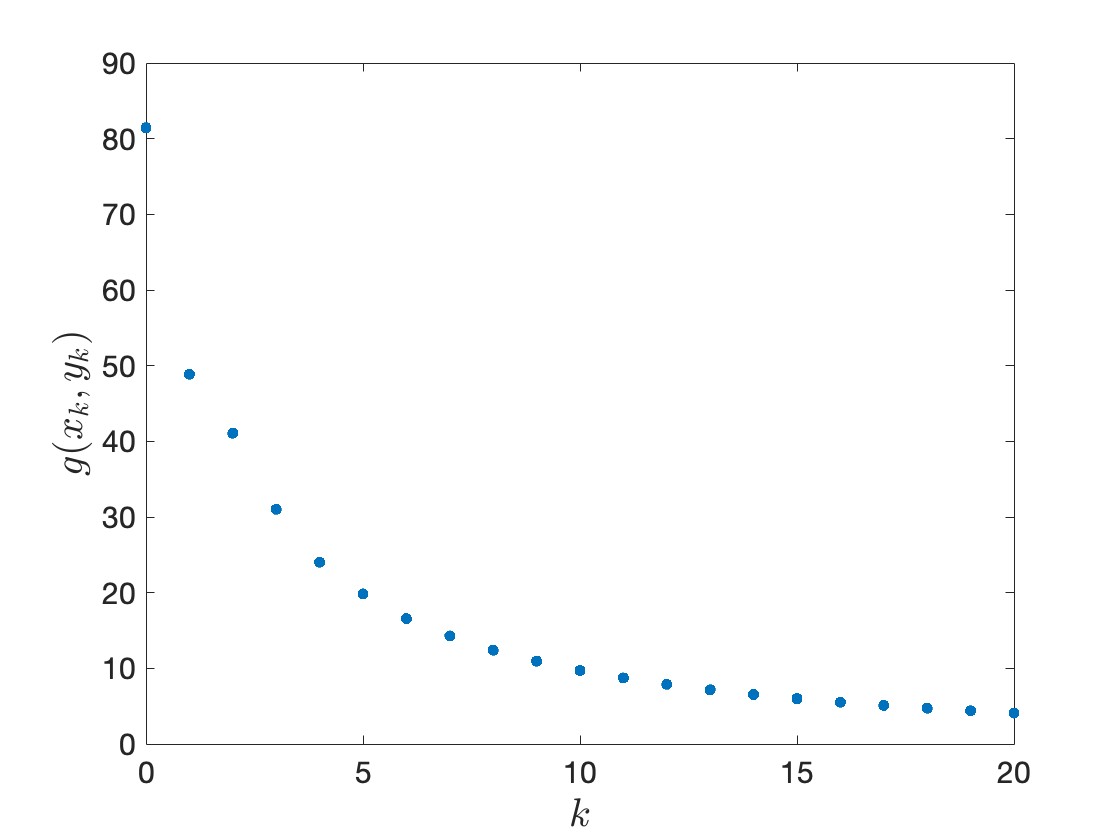}
  \caption{$g(x,y)=x^2e\p{4y}$.}
  \label{fig:parabola_g}
\end{subfigure}
\caption{Normalized gradient descent with step size $1/\sqrt{k+1}$.}
\end{figure}


\begin{example}[Cubic]
\label{eg:cubic}
    The flat minimum $(1/2\p{1/3},-1/2\p{1/3})$ of $f(x,y)=(x\p 3-y\p 3 -1)\p 2$ is an asymptotically 2-stable point contained in the asymptotic 2-attractor $\{(x,(x\p 3-1)\p {1/3}):x\in [0,1]\}$.
\end{example}
\begin{proof}
Compute
    $$\nabla f(x,y) =
    6(x\p 3-y\p 3 -1) 
    \begin{pmatrix}
        x\p 2 \\ - y\p 2    
    \end{pmatrix} ~~~\text{and}~~~ \widehat\nabla f(x,y) =
    \frac{\sgn(x\p 3-y\p 3 -1)}{\sqrt{x\p 4+y\p 4}} 
    \begin{pmatrix}
        x\p 2 \\ - y\p 2    
    \end{pmatrix}.$$
    Since $\nabla f(x,y)=0$ implies $f(x,y)=0$ or $(x,y)=(0,0)$, which a saddle point, every local minimum is a global minimum.
    Observe that $f=F^2$ where $F(x,y)=x^3-y\p 3-1$. Thus $\lambda_1(\nabla^2f(x,y))=2|F'(x,y)|^2=18(x\p 4+y\p 4)$ whenever $F(x,y)=0$ by \cite[Fact 3.26]{josz2025lyapunov}. Consider the Lagrangian $L(x,y,\lambda) = x\p 4+y\p 4 + \lambda(x\p 3 - y\p 3 -1)$. The gradient of the constraint is nonzero for all feasible points, so constraint qualification holds. First-order optimality yields $x=-y=-3\lambda/4$ and so $(x,y)=(1/2\p{1/3},-1/2\p{1/3})$. This point is thus the unique minimum of $\lambda_1(\nabla^2f)$ on $[f=0]$, which implies that it is the sole flat minimum by \cite[Corollary 2]{josz2025flat}.
    
    Consider the system
    \begin{equation*}
    \left\{
    \begin{array}{rcc}
    \dot{x} & = & x\p 2, \\
    \dot{y} & = & -y\p 2.
    \end{array}
    \right.
\end{equation*}
    We have $\dot{x}x\p {-2}+\dot{y}y\p {-2}=0$, hence $-1/x-1/y$ is conserved. Let $g:\R\p 2 \to \R$ be defined by 
    $$ g(x,y) = 
    \left\{\begin{array}{cc}
         \exp\left[-\sgn(x+y)\left(\frac{1}{x}+\frac{1}{y}\right)\right] & \text{if}~ xy\neq 0,\\
         0 & \text{if}~ xy = 0. 
    \end{array}
    \right.
    $$ 
    When $x+y\neq 0$, we have
\begin{equation*}
    \nabla g(x,y) = s g(x,y) \begin{pmatrix}
        x\p {-2} \\
        y\p {-2}
    \end{pmatrix} ~~~\text{and}~~~
        \nabla\p 2 g(x,y) = g(x,y) \begin{pmatrix}
        x\p{-4}-2sx\p{-3} & x\p{-2}y\p{-2} \\
        x\p{-2}y\p{-2} & y\p{-4}-2sy\p{-3}
    \end{pmatrix}
\end{equation*}
where $s=\sgn(x+y)$. Thus
\begin{equation*}
\forall u\in \widehat\nabla f(x,y),~~~ 
    \langle \nabla g(x,y) , u \rangle = 0 ~~~\text{and}~~~
        \langle \nabla\p 2 g(x,y) u , u \rangle  = 
        -\frac{2g(x,y)|x+y|}{x\p 4+y\p 4}<0
\end{equation*}
near any global minimum, aside from $(0,-1)$, $(1/2\p{1/3},-1/2\p{1/3})$, and $(1,0)$. By \cref{prop:sufficient_dL2}, $g$ is $2$-d-Lyapunov near any such point. Observe that $g(x,y)\geq 1$ iff $xy\neq 0 \land \sgn(x+y)\left(1/x+1/y\right)\leq 0$ iff $xy<0$. Hence, if $xy<0$ and $f(x,y)+g(x,y)=1$, then $f(x,y)=0$ and $g(x,y)=1$, i.e., $(x,y) = (1/2\p{1/3},-1/2\p{1/3})$. This point is thus a strict local minimum of $f+g+\delta_{[g\geq 1]}$, and so it is asymptotically 2-stable by \cref{thm:stable_set}. 

Let $h = g \chi_{[g<1]}$. We have $\sup_{[f=0]}h=1$ and $h(x,y)<1$ for all $(x,y)\in[f=0]$. For any $\widetilde h \in (0,1)$, $[f\leq 1]\cap[h\leq\widetilde h]$ is bounded, otherwise there exists $(x_k,y_k)$ with $|(x_k,y_k)|\to\infty$ such that $(x_k\p 3-y_k\p3-1)\p 2\leq 1$ (and hence $|x_k|,|y_k|\to\infty$), $x_ky_k> 0$, and $1/x_k+1/y_k\geq \ln(1/\widetilde h)>0$. This is impossible. One now concludes by \cref{thm:attractor} and \cite[Remarks 3.17 and 3.18]{josz2025lyapunov}.
\end{proof}

\begin{figure}[H]
\centering
\begin{subfigure}{.49\textwidth}
  \centering
  \includegraphics[width=.88\textwidth]{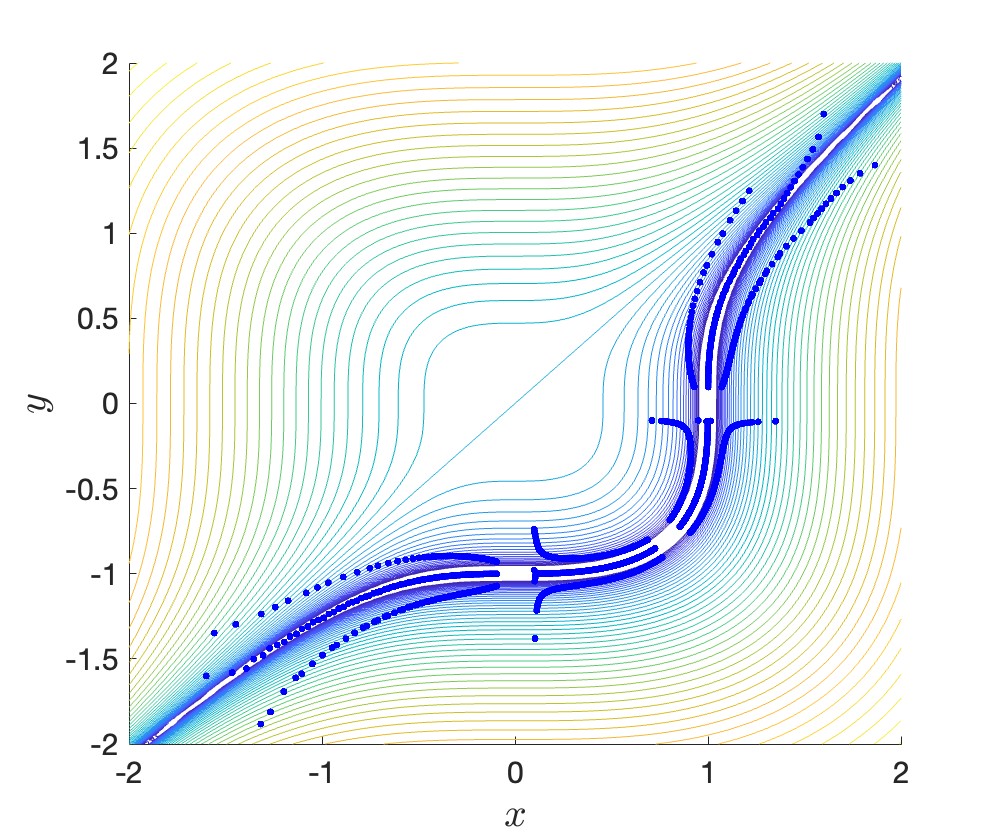}
  \caption{$f(x,y)=(x\p 3-y\p 3 -1)\p 2$.}
  \label{fig:x3y3}
\end{subfigure}
\begin{subfigure}{.49\textwidth}
\centering
  \includegraphics[width=1\textwidth]{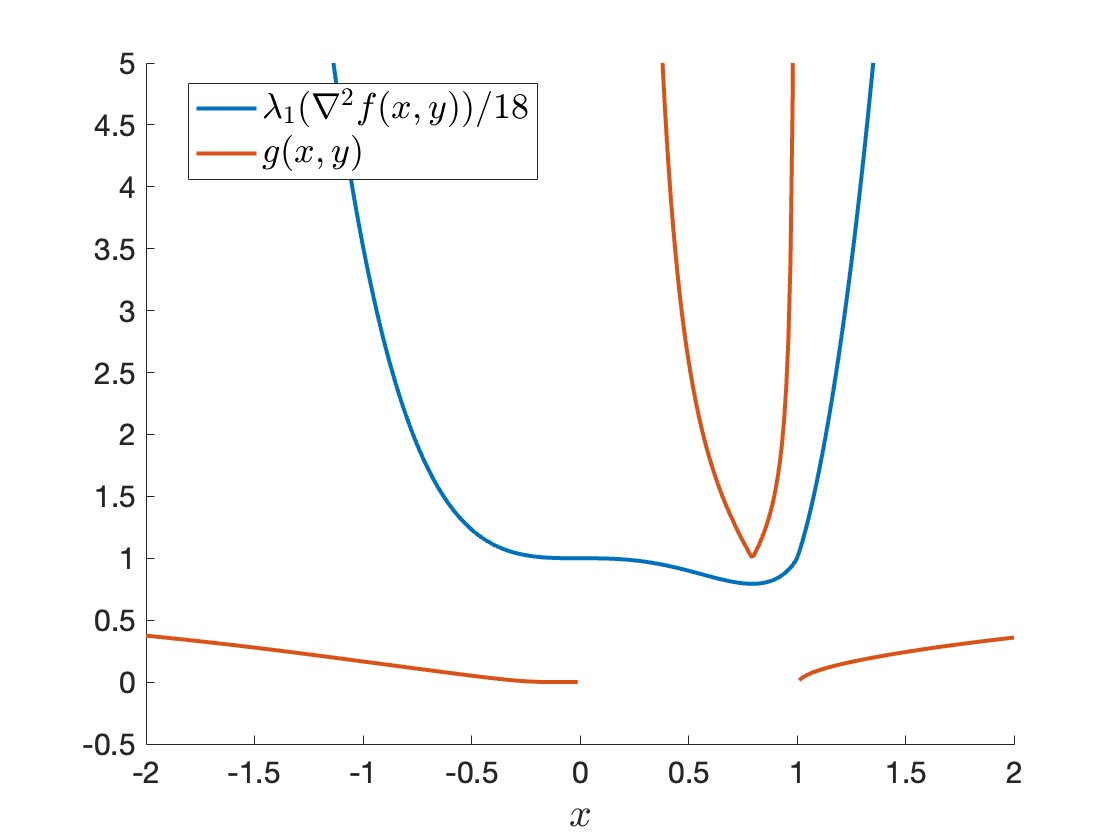}
  \caption{$\lambda_1 (\nabla \p 2 f(x,y)) = 18(x\p 4+|x\p 3-1|\p {4/3})$ and $g(x,y) = \exp[\sgn(1-2x\p 3)(1/x+1/(x\p 3-1)\p{1/3})]$ when $y=(x\p 3-1)\p {1/3}$.}
  \label{fig:x3y3_Hessian}
\end{subfigure}
\caption{Normalized gradient descent with step size $0.4/(k+1)\p {1/4}$ and 4 initial points.}
\end{figure}

\begin{example}[Ellipse, hyperbola]
    \label{eg:ellipse}
    The flat minima of $f(x,y) = (ax^2+by^2-1)\p 2$ are almost sure asymptotic 2-attractors. They are $\pm (0,1/\sqrt{b})$ if $a>b>0$ or $a<0<b$.
\end{example}
\begin{proof}
Compute
$$\nabla f(x,y) = 4(ax\p 2 + by\p 2-1) \begin{pmatrix}
    ax \\ by
\end{pmatrix} ~~~\text{and}~~~ \widehat \nabla f(x,y) = \frac{\sgn(ax\p 2 + by\p 2-1)}{\sqrt{a\p 2x\p 2 +b\p 2 y\p 2}} \begin{pmatrix}
    ax \\ by
\end{pmatrix}.$$
Observe that $f=F^2$ where $F(x,y)=ax^2+by\p 2$. Thus $\lambda_1(\nabla^2f(x,y))=2|F'(x,y)|^2=8(a\p 2x^2+b\p 2 y\p 2)$ whenever $F(x,y)=0$ by \cite[Fact 3.26]{josz2025lyapunov}. Suppose $a>b>0$ or $a<0<b$. Then $\pm (0,1/\sqrt{b})$ are the only minima of $\lambda_1(\nabla^2f)$ on $[f=0]$, which implies that they are the flat global minima of $f$ by \cite[Corollary 3.19]{josz2025flat}. 
Consider the system 
\begin{equation*}
    \left\{
    \begin{array}{rcc}
    \dot{x} & = & ax, \\
    \dot{y} & = & by.
    \end{array}
    \right.
\end{equation*}
It admits the unique solution $(x(t),y(t)) = (x(0) e\p {at},y(0) e\p {bt})$. If $y(0)\neq 0$, then $x(t) \p b / y(t)\p a = x(0) \p b / y(0)\p a$. Thus let $g:\R\p 2\to \eR$ be defined by
$$g(x,y) = \left\{
\begin{array}{cc}
    |x|\p b/|y|\p a & \text{if}~y\neq 0, \\
    \infty & \text{if}~y=0.
\end{array}
\right.
$$ 
Assume $b\geq 3$, so that $g$ is $C\p {2,2}$. Since
\begin{equation*}
    \nabla g(x,y) =  \begin{pmatrix}
        bx|x|\p {b-2}/|y|\p {a} \\
        -ay|x|\p {b} / |y|\p {a+2}
    \end{pmatrix} ~~~\text{and}~~~
        \nabla\p 2 g(x,y) = \begin{pmatrix}
        b(b-1)|x|\p {b-2}/|y|\p {a} & -abxy|x|\p {b-2}/|y|\p {a+2} \\
        -abxy |x|\p {b-2} / |y|\p {a+2} & a(a+1)|x|\p {b} / |y|\p {a+2}
    \end{pmatrix},
\end{equation*}
we have $\langle \nabla g(x,y) , \nabla f(x,y)\rangle = 0$ for all $(x,y)\in\R\p 2$ and
\begin{equation*}
\forall (x,y)\in [f=0]\cap[0<g<\infty],~\forall u\in \widehat\nabla f(x,y),~~~
        \langle \nabla\p 2 g(x,y) u , u \rangle  = -\frac{ab(a-b) g(x,y)}{a\p 2x\p 2+b\p 2y\p 2} < 0.
\end{equation*}
If $a>b>0$, then $f$ is coercive, and $g$ is nonnegative, so $f+g$ is coercive. If $a<0<b$, then $f(x,y)+g(x,y)\geq g(x,y)=|x|^b|y|^{-a}\to\infty$ as $|x|,|y|\to\infty$, and $f(x,y)+g(x,y)\geq f(x,y)\to\infty$ as $|x|\to \infty$ and $y$ is bounded or vice-versa. Thus $f+g$ is coercive. One now concludes by \cref{thm:implicit_NGD}. 

If $b<3$, then one may either use $g\p {3/b}$ as a $C\p {2,2}$ d-Lyapunov function, or consider the function $\breve{f}(x,y) = (9/b\p 2)f(x,y) =[3(a/b)x\p 2 + 3y\p 2-3/b]\p 2$, for which $\widehat\nabla \breve f = \widehat\nabla f$. In any case, it is also possible to proceed without requiring $g$ to be $C\p{2,2}$: for all $(x,y)$ near a point in $\dom g$, for all $s\in \sgn(ax^2+by^2-1)$, and for all sufficiently small $\alpha>0$, we have
\begin{align*}
    g(x\p+,y\p+) &= \frac{|x\p +|\p b}{|y\p +| \p a} = \frac{\left|x-\alpha\frac{sax}{\sqrt{a^2x^2+b^2y^2}}\right|^b}{\left|y-\alpha\frac{sby}{\sqrt{a^2x^2+b^2y^2}}\right|^a}=\frac{|x|^b}{|y|^a}\frac{\left(1-\frac{\alpha sa}{\sqrt{a^2x^2+b^2y^2}}\right)^b}{\left(1-\frac{\alpha sb}{\sqrt{a^2x^2+b^2y^2}}\right)^a} \\
    &=g(x,y)\exp\left[b\ln\left(1-\frac{\alpha sa}{\sqrt{a^2x^2+b^2y^2}}\right)-a\ln\left(1-\frac{\alpha sb}{\sqrt{a^2x^2+b^2y^2}}\right)\right]\\
    &=g(x,y)\exp\left(-\frac{s\p 2(ba^2-ab^2)\alpha^2}{2(a^2x^2+b^2y^2)} +O(\alpha^3)\right) \\
    &\leq g(x,y)\exp\left( -\frac{ab(a-b)}{4(a^2x^2+b^2y^2)}\alpha\p 2\right)             
\end{align*}
since $\ln(1+t) = t-t\p 2/2+O(t\p 3)$.
Thus $g$ is $2$-d-Lyapunov near any point in $[0<g<\infty]$.
\end{proof}


\begin{figure}[H]
\centering
\begin{subfigure}{.49\textwidth}
  \centering
  \includegraphics[width=1\textwidth]{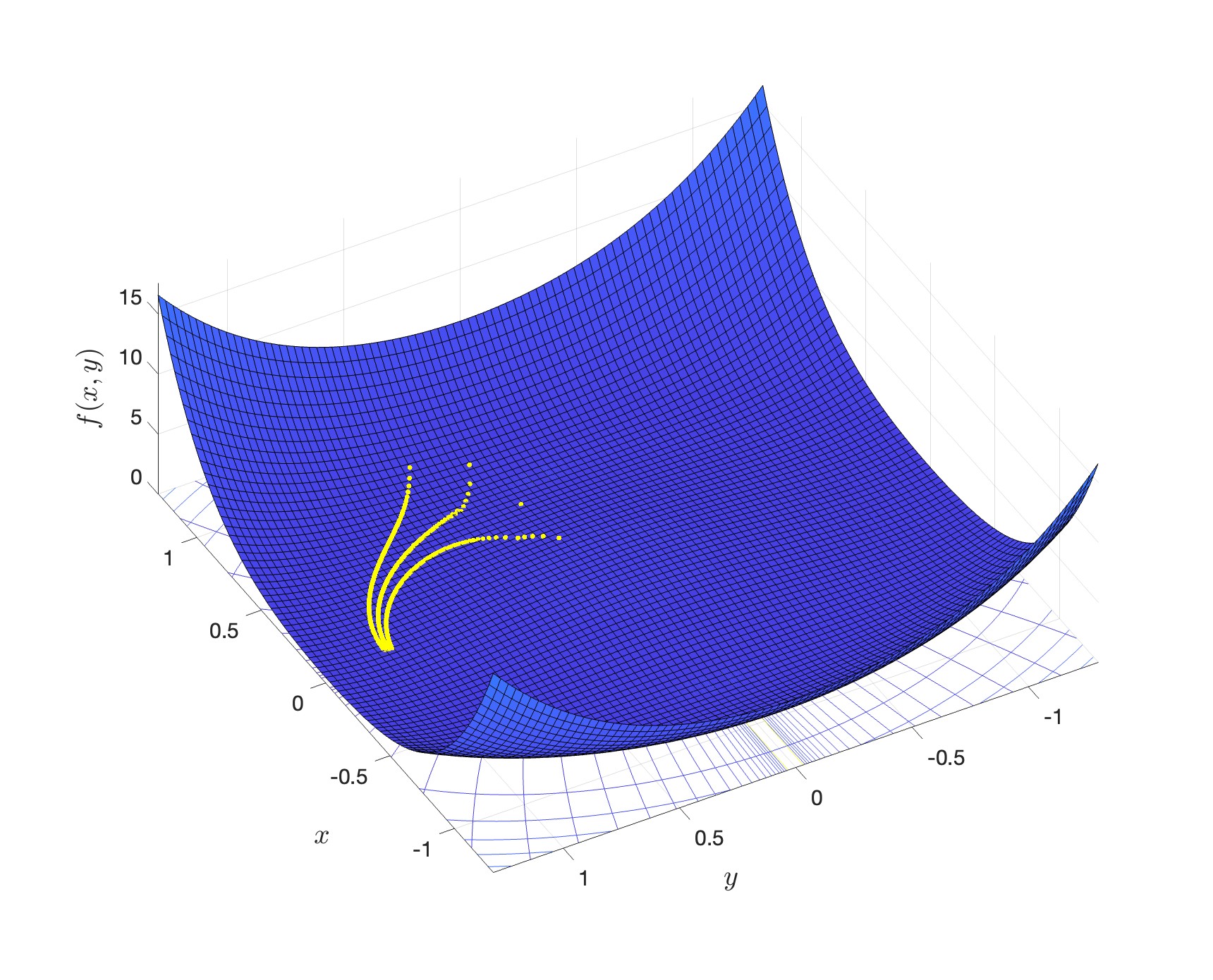}
  \caption{$f(x,y)=(2x^2+y^2-1)\p 2$.}
  \label{fig:ellipse}
\end{subfigure}
\begin{subfigure}{.49\textwidth}
\centering
  \includegraphics[width=1\textwidth]{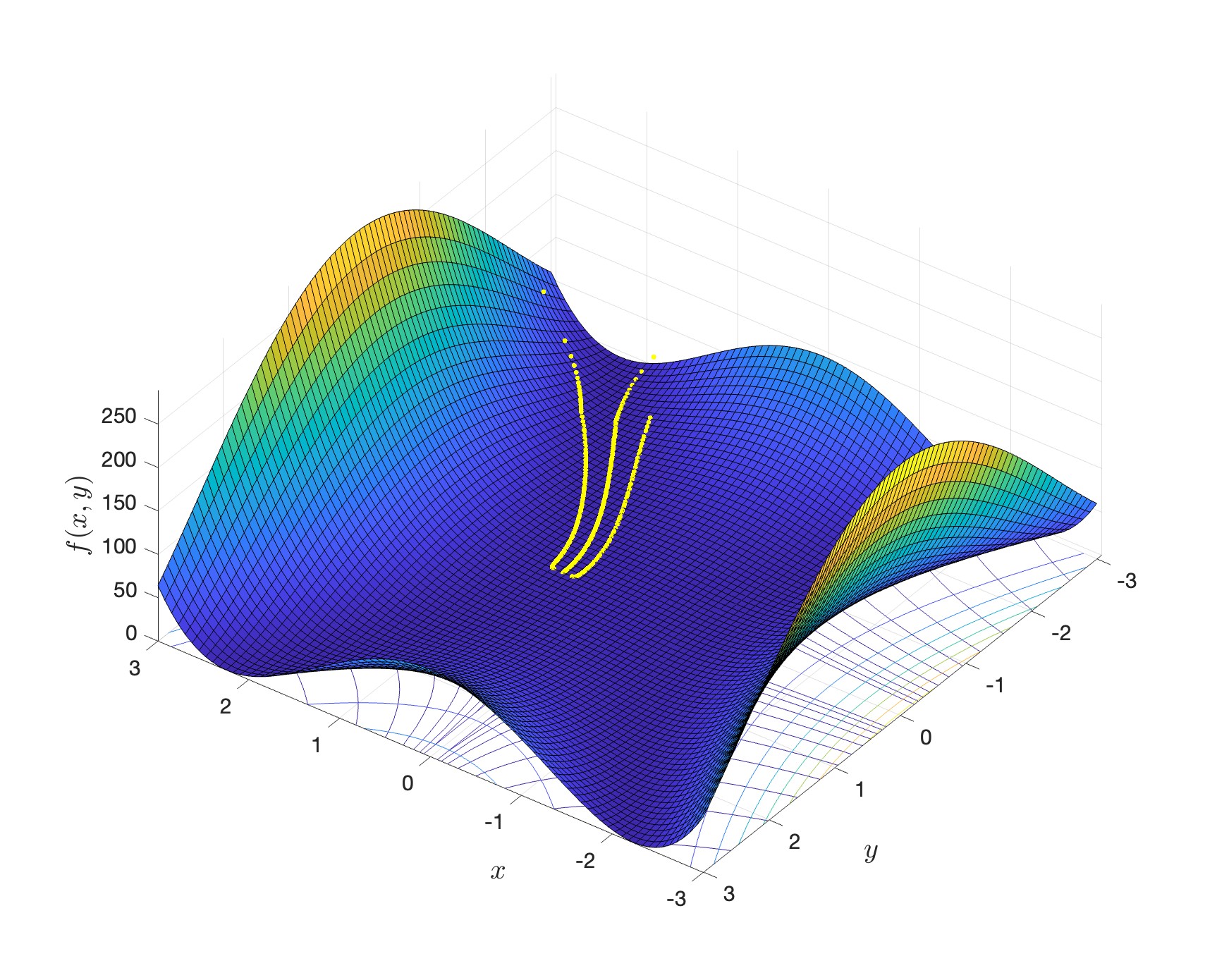}
  \caption{$f(x,y)=(2x^2-y^2-1)\p 2$.}
  \label{fig:hyperbola}
\end{subfigure}
\caption{Normalized gradient descent with step sizes $1/(k+1)\p {1/2}$ and $1/(k+1)\p {1/3}$ resp.}
\end{figure}

The following example captures the function studied in \cite{beneventano2025gradient} as a special case.

\begin{example}
    \label{eg:bilinear}
    Let $f:\R^m\times \R^n\to \R$ be defined by $$f(x,y)=(\langle Ax,y \rangle-1)\p 2$$ where $A\in \mathbb{R}^{m\times n}$. Each flat minimum is d-stable, and the set of flat minima is an almost sure asymptotic 2-attractor. For almost every $A$, each flat minimum is asymptically 2-stable.
\end{example} 
\begin{proof}
    Consider a singular value decomposition $A = U\Sigma V^T$. The change of variables $(V^T  x, U^T y)$ reduces the problem to $f(x,y)=(\langle \Sigma x,y \rangle-1)\p 2$ by \cref{prop:change}. Without loss of generality, we may consider that $\Sigma$ has $n$ nonzero entries and $m=n$. A further change of variables $(x+y,x-y)/\sqrt{2}$ reduces the problem to $f(x,y)=(\langle \Sigma x,x \rangle/2 - \langle \Sigma y,y \rangle/2 -1)\p 2$. We are thus led to consider a multivariate extension of \cref{eg:ellipse}, namely $$f(x)=(a_1x_1^2+\dots+a_n x_n^2-1)\p 2/2$$
    where $a \in \mathbb{R}^{n}$. 

    Compute $$\nabla f(x) = 2(a_1x_1^2+\dots+a_n x_n^2-1)\begin{pmatrix}a_1x_1 \\ \vdots \\ a_n x_n\end{pmatrix}\text{and}~ \widehat\nabla f(x) = \frac{\sgn(a_1x_1^2+\dots+a_n x_n^2-1)}{\sqrt{a_1^2x_1^2+\dots+a_n^2x_n^2}}\begin{pmatrix}a_1x_1 \\ \vdots \\ a_n x_n\end{pmatrix}.$$
    Since $\nabla f(x)=0$ implies $f(x)=0$ or $a_ix_i=0$ for all $i\in \llbracket 1,n\rrbracket$, every local minimum is a global minimum. A minimum of $f$ is flat iff $a_ix_i = 0$ for all $i\notin I =\arg\min \{a_i : a_i > 0\}$ by \cite[Example 5.4]{josz2025flat}. The case where $I$ is empty is trivial. Without loss of generality, assume $\min \{ a_i : a_i > 0\}=1$ and $a_i \neq 0$ for all $i\in \llbracket 1,n\rrbracket$. Thus, a minimum $x$ of $f$ is flat iff $x_{I\p c} =0$ where $I\p c = \llbracket 1,n\rrbracket \setminus I$.  
    
    Due to the expression of $\nabla f$ above, the quantity $C:\R\p n\to\eR$ defined by
 \begin{equation*}
    C_i(x) = 
    \left\{
    \begin{array}{ll}
         |x_i|/|x_I| & \text{if} ~ i\in I~\text{and}~x_I\neq 0, \\
         |x_i|/|x_I|^{a_i} & \text{if} ~ i\notin I~\text{and}~x_I\neq 0, \\
         \infty & \text{if}~x_I=0,
    \end{array}
    \right.
\end{equation*}
 is conserved. Let $c(x)=\|C(x)-C(\overline{x})\|_1$ where $\overline{x}$ is a flat minimum. Note that $C_i(\overline{x}) = 0$ for all $i\notin I$. For all $x\in \R\p n$ near a point in $\dom c$ and all sufficiently small $\alpha>0$, we have 
 $$\forall i \notin I, ~~~ C_i(x\p +) \leq C_i(x) \exp\left( -\frac{a_i(a_i-1)}{4(a_1^2x_1^2+\cdots+a_n^2x_n^2)}\alpha \p 2\right),$$ as in \cref{eg:ellipse}, and 
 $$\forall i \in I, ~~~ C_i(x^+)  = \frac{|x_i-\alpha s a_i x_i|}{|x_I-\alpha s a_I x_I|}
 = \frac{|x_i||1-\alpha s a_i|}{|x_I||1-\alpha s a_i|}
 = \frac{|x_i|}{|x_I|} = C_i(x).$$ 
Thus $c(x^+)\leq c(x)$ and $c$ is d-Lyapunov near $\overline x$. Moreover, $f(x) + c(x) = 0$ implies that $|x_i| = |\overline{x}_i|$ for all $i\in \llbracket 1,n \rrbracket$, so $\overline x$ is a strict local minimum of $f+c$. By \cref{thm:stable_point}, $\overline x$ is d-stable. Note that to prove stability, we could have dropped one of the components of $C$ with index $i\in I$. In particular, if $I$ is a singleton, which happens almost surely, then $c$ is actually 2-d-Lyapunov near any point in $[0<c<\infty]$. In that case, $\overline{x}$ is asymptotically 2-stable by \cref{thm:stable_set}.

Let $g:\R\p n\to\eR$ be defined by $g(x)=\|C_{I^c}(x)\|_1$. Similarly, we have 
$$g(x\p +) \leq g(x) \exp\left( -\frac{\max_{i \notin I} a_i(a_i-1)}{4(a_1^2x_1^2+\cdots+a_n^2x_n^2)}\alpha \p 2\right)$$ and so $g$ is 2-d-Lyapunov near any point in $[f=0]\cap[0<g<\infty]$. Coercivity follows from similar arguments as in \cref{eg:ellipse}. Since $[f=0]\cap[g=0]$ are the flat minima of $f$, attractiveness follows by \cref{thm:attractor}. As in \cref{eg:ellipse}, it is also possible to apply \cref{thm:implicit_NGD} using a $C\p {2,2}$ d-Lyapunov function.
\end{proof}

\begin{example}
    \label{eg:monomial}
    The flat global minima of $f(x) = (x^\upsilon - 1)\p 2$ where $x^\upsilon = x_1^{\upsilon_1}\cdots x_n^{\upsilon_n}, \upsilon \in {\mathbb{N}^*}^n$, namely 
    $$|x_i| = \frac{\sqrt{\upsilon_i}}{\sqrt{\upsilon_1^{\upsilon_1}\cdots\upsilon_n^{\upsilon_n}}^{1/|\upsilon|_1}}, ~~~ i = 1, \hdots, n,$$
    for any choice of signs such that $x^\upsilon=1$, are asymptotic 2-attractors.
\end{example}
\begin{proof}
As shown in \cite[Example 5.6]{josz2025flat}, a conserved quantity is given by
    $$C(x) = \begin{pmatrix} \upsilon_nx_1^2 - \upsilon_1x_n^2\\ \vdots \\ \upsilon_n x_{n-1}^2 - \upsilon_{n-1}x_n^2 \end{pmatrix}$$
    and a global minimum is flat iff $C(x) = 0$.
    Also, for all $x\in\R\p n$ near a nonflat global minimum, we have $\langle C(x),C(\nabla f(x))\rangle \leq -\omega|\nabla f(x)|\p 2$ for some constant $\omega>0$. With $c(x) = |C(x)|\p 2/4$, this yields 
    $$\forall x\in [f=0]\cap[0<c],~\forall u \in \widehat\nabla f(x),~~~ \langle \nabla\p 2 c(x)u,u\rangle = \langle C(x),C(u)\rangle < 0,$$ using \cref{prop:conserved}. Thus $c$ is 2-d-Lyapunov near any nonflat global minimum, according to \cref{prop:sufficient_dL2}.
    
    Let $\overline{x}$ be a flat global minimum. By \cref{prop:projection_composition}, $f=F\p 2$ where $F(x)=x\p \upsilon -1$ satisfies the normalized perturbed projection formula at $\overline x$ along $[f=0]$ (one could also use symmetry here, via \cref{prop:projection_symmetry}). By \cref{prop:strict}, $\overline{x}$ is a strict local minimum of $f+c$ since
\begin{equation*}
        \sp\widehat\nabla f(\overline{x}) + \Im C'(\overline{x})^* = \Im\begin{pmatrix}
            \upsilon_1 \overline{x}^{\upsilon}/\overline{x}_1 &
2\upsilon_n \overline{x}_1 & \cdots & 0 \\ 
            \vdots & \vdots & \ddots & \vdots \\
            \upsilon_{n-1} \overline{x}^{\upsilon}/\overline{x}_{n-1} & 0 & \cdots & 2\upsilon_n \overline{x}_{n-1} \\
            \upsilon_n \overline{x}^{\upsilon}/\overline{x}_n & - 2\upsilon_1 \overline{x}_n & \cdots & - 2\upsilon_{n-1} \overline{x}_n
        \end{pmatrix} = \mathbb{R}^n.
    \end{equation*}
    Indeed, the first column is orthogonal to the others which are clearly linearly independent. Therefore, flat minima are d-stable by \cref{thm:stable_set}. 
    
    But $f+c$ is also coercive: if $|x|\to \infty$ and $f+c$ is bounded, then some variable, say $x_1$, diverges, and so does $x_n$, and thus every other variable, which is impossible since $x\p \upsilon =1$. Flat global minima of $f$, i.e., $\arg\min f+c$, are hence asymptotic 2-attractors by \cref{thm:attractor} (it is also possible to apply \cref{thm:implicit_NGD}). This fact alone implies that the flat global minima are the d-stable global minima, but we wanted to illustrate the different tools available.
\end{proof}

While normalized gradient descent is initialized near the global minimum $(2,2^{-1/2})$ in \cref{fig:monomial_xy2}, it converges to the flat global minimum $(2^{-1/3},2^{1/6})$.

\begin{figure}[H]
\centering
\begin{subfigure}{.49\textwidth}
  \centering
  \includegraphics[width=1\textwidth]{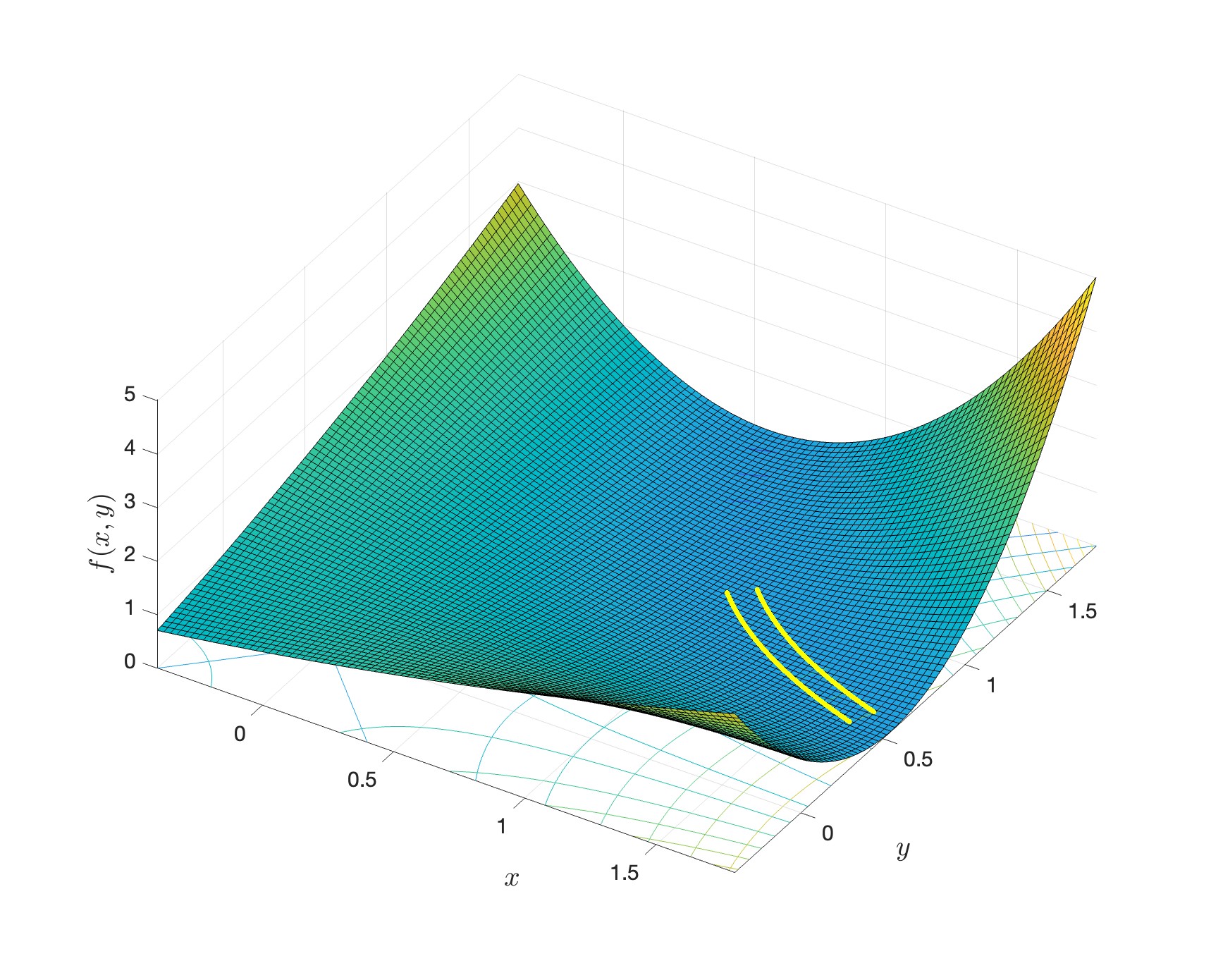}
  \caption{$f(x,y)=(xy-1)\p 2$.}
  \label{fig:monomial_xy}
\end{subfigure}
\begin{subfigure}{.49\textwidth}
\centering
  \includegraphics[width=1\textwidth]{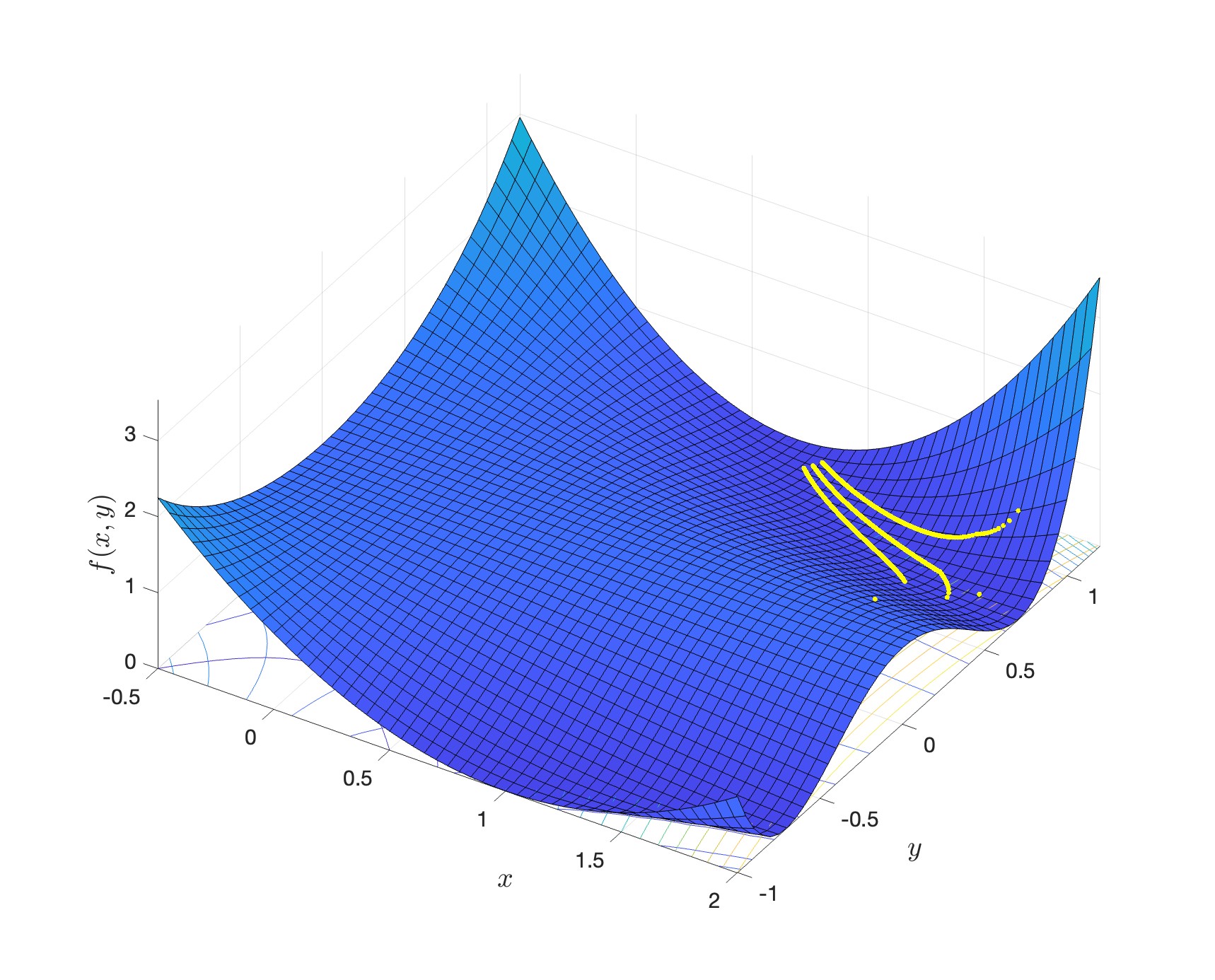}
  \caption{$f(x,y)=(xy^2-1)\p 2$.}
  \label{fig:monomial_xy2}
\end{subfigure}
\caption{Normalized gradient descent with step sizes $0.1$ and $0.5/(k+1)\p {1/3}$ resp.}
\end{figure}

\begin{example}
    \label{eg:mf1_01}
    The flat global minima $\pm (0,2^{1/4},2^{-1/4})$ of $f(x) = |x_1x_3|+|x_2x_3-1|$ are asymptotic 2-attractors.
\end{example}
    \begin{proof} 
    By \cite[Theorems 8.9 and 10.6]{rockafellar2009variational} (see also \cite[Fact 3.25]{josz2025flat}), $f$ is regular,
    \begin{equation*}
       \partial f(x) = \left\{ \begin{pmatrix}
            \lambda_1 x_3 \\
            \lambda_2 x_3 \\
            \lambda_1 x_1 + \lambda_2 x_2
        \end{pmatrix}
        ~,\hspace{3mm} \lambda_1\in\sign(x_1x_3),~ \lambda_2 \in \sign(x_2x_3-1)
        \right\},
    \end{equation*}
    and $\lip f(x) = \sqrt{2x_3\p 2+ (|x_1|+|x_2|)\p 2}$ for all $x\in\R\p 3$. Also,
    $$
        \widehat\nabla f(x) = \left\{ \begin{pmatrix}
            \lambda_1 x_3 \\
            \lambda_2 x_3 \\
            \lambda_1 x_1 + \lambda_2 x_2
        \end{pmatrix}\left/\left|\begin{pmatrix}
            \lambda_1 x_3 \\
            \lambda_2 x_3 \\
            \lambda_1 x_1 + \lambda_2 x_2
        \end{pmatrix}\right|\right.
        ~,\hspace{3mm} \lambda_1,\lambda_2 \in \{-1,1\}
        \right\}
    $$
    for all $x \in \arg\min f = \{(0,t,1/t) : t\neq 0\}$. Accordingly, let $x_t=(0,t,1/t)$. Since $\lip f(x_t) =\sqrt{ 2/t\p2 + t\p 2}$ is strictly minimized at $t = 2\p{1/4}$,  by \cite[Corollary 3.18]{josz2025flat} $\pm \overline{x}$ are the flat minima where $\overline{x} = (0,2^{1/4},2^{-1/4})$. 
    The objective $f$ is invariant under the natural action of the Lie group $G = \{ \mathrm{diag}(t,t,1/t) : t\neq 0 \}$
    whose Lie algebra is $ \fg =\sp \{\mathrm{diag}(1,1,-1) \}$.
    By \cref{prop:conserved}, a conserved quantity is given by $C(x) =  x_1^2+x_2^2 - x_3^2$. Consider the function $c(x)=(C(x)-C(\overline{x}))^2/4$ where $C(\pm\overline{x}) = \sqrt{2}/2$. 
    By \cref{prop:conserved}, 
    $$\forall u \in \widehat\nabla f(x_t),~~~ \langle \nabla\p 2 c(x_t)u,u\rangle  = (C(x_t) - C(\overline{x}))C(u) = (t^2 - 1/t^2 -\sqrt{2}/2)\frac{2/t^2 - t^2}{2/t\p 2+t\p 2} < 0$$
    for all $t \neq \pm 2^{1/4}$, i.e., $c(x_t)>0$. Since $f+c$ is coercive, $\pm \overline{x}$ are asympototic 2-attractors by \cref{thm:implicit_NGD}.
\end{proof}

\begin{example}
    \label{eg:mf1_rank1}
    Let $f:\R^m\times \R^n\to \R$ be defined by
        $$f(x,y) = \|x y^T- u v^T \|_1 $$
    where $u\in \mathbb{R}^{m}$ and $v\in \mathbb{R}^{n}$. If $a^T uv^T b \neq 0$ for all $a \in \{-1,1\}^m$ and $b \in \{-1,1\}^n$, then there exist $0<\underline{t} < \overline{t}$ such that $\{ (ut,v/t) :  \underline{t} \leq |t| \leq \overline{t} \}$
is an asymptotic 2-attractor containing the flat global minima.
\end{example}
\begin{proof}
By \cite[Theorem 10.6]{rockafellar2009variational}, $f$ is regular and we have $$\partial f(x,y) = \left\{\begin{pmatrix}
            \Lambda y\\
            \Lambda^T x\end{pmatrix} : \Lambda \in \sign(x y^T- u v^T) \right\}.$$ Naturally, we have $\arg\min f = \{(ut,v/t) : t\neq 0\}$. The assumption on $u$ and $v$ implies that $\Lambda v \neq 0$ and $\Lambda^T u \neq 0$ for all $\Lambda\in \{-1,1\}^{m\times n}$. If $(x,y)\in\arg\min f$, then
        $$\widehat\nabla f(x,y) \subseteq \left\{\frac{1}{\sqrt{\|\Lambda y\|_F\p 2+\|\Lambda^T x\|_F\p 2}}\begin{pmatrix}
            \Lambda y\\
            \Lambda^T x\end{pmatrix} : \Lambda \in \{-1,1\}^{m\times n} \right\}.$$
The objective is invariant under the action $\R\p *\times \R^m\times \R^n\ni  (t,x,y)\mapsto (ut,v/t) \in \R^m\times \R^n$. By \cref{prop:conserved}, a conserved quantity is given by $C(x,y) =  
|x|^2 -|y|^2$. Consider the function $c(x,y) = C(x,y)\p 2/4$. For any $\Lambda\in\{-1,1\}^{m\times n}$, with $w = (\Lambda v/t,\Lambda^T ut)/\sqrt{|\Lambda v|\p 2/t\p 2+|\Lambda\p T u|\p 2 t\p 2}$ we have 
$$\langle \nabla\p 2 c(ut,v/t)w,w\rangle = C(ut,v/t)C(w) = (|u|^2t^2 -|v|^2/ t^2 ) \frac{|\Lambda v|^2 / t^2 -|\Lambda^T u|^2 t^2}{|\Lambda v|\p 2/t\p 2+|\Lambda\p T u|\p 2 t\p 2} < 0$$
for all $|t|$ sufficiently small or large, or equivalently, for all $c(ut,v/t)$ sufficiently large, say above some $\overline{c}>0$. Since $f+\max\{0,c-\overline{c}\}$ is coercive, the conclusion now follows from \cref{thm:implicit_NGD}.
Note that flat minima exist by \cite[Proposition 4.19]{josz2025flat}. 
\end{proof}

\section{Appendix}
\label{sec:Appendix}

\begin{proof}[Proof of \cref{fact:projection}]
    The first two properties are a local version of the tubular neighborhood theorem \cite[Corollary 5.14]{lee2012smooth}. We repeat Lee's argument here for completeness. The map  $dE_{(\overline{x},0)}$ is invertible for two reasons. First, the restriction of $E$ to the zero section $M_0 = \{(x,0) : x\in M\}$ is the obvious diffeomorphism $M_0\to M$, so $dE_{(\overline{x},0)}$ maps the subspace $T_{(\overline{x},0)} M \subseteq T_{(\overline{x},0)}NM$ isomorphically onto $T_{\overline{x}}M$. Second, the restriction of $E$ to the fiber $N_{\overline{x}}M$ is the affine map $v\mapsto \overline{x}+v$, so $dE_{(\overline{x},0)}$ maps $T_{(\overline{x},0)}(N_{\overline{x}}M)\subseteq T_{(\overline{x},0)}NM$ isomorphically onto $N_{\overline{x}}M$. Since $T_{\overline{x}} \R\p n = T_{\overline{x}} M \oplus N_{\overline{x}} M$, this shows that $dE_{(\overline{x},0)}$ is surjective, and hence bijective by virtue of $\dim NM = n$. By the inverse function theorem \cite[Theorem 4.5]{lee2012smooth}, there are connected neighborhoods $U_0$ of $(\overline{x},0)$ and $V_0$ of $E(\overline{x},0) = \overline{x}$ such that $E|_{U_0}:U_0\to V_0$ is a $C\p 1$ diffeomorphism.
    
    The local slice criterion for embedded submanifolds \cite[Theorem 5.8]{lee2012smooth} implies that $M$ is locally closed. Accordingly, let $V \subseteq V_0$ be a neighborhood of $\overline{x}$ in $\R\p n$ such that $M\cap V$ is closed in $V$. There exists a closed set $A$ in $\R\p n$ such that $M\cap V = A \cap V$. Let $\rho>0$ be such that $\overline{B}_{2\rho}(\overline{x}) \subseteq V$ and fix $x \in B_\rho(\overline{x})$. If $y\notin B_{2\rho}(\overline{x})$, then by the triangular inequality $|x-y|\geq |y-\overline{x}|-|x-\overline{x}| > 2\rho - \rho = \rho$. As a result,
    \begin{align*}
        P_M(x) & = \arg\min \{ |x-y|: y\in M\} = \arg\min \{ |x-y|: y\in M \cap \overline{B}_{2\rho}(\overline{x})\} \\
        & = \arg\min \{ |x-y|: y\in A \cap \overline{B}_{2\rho}(\overline{x})\}.
    \end{align*}
    This guarantees the existence of a projection by \cite[Theorem 1.9]{rockafellar2009variational}. More concisely, $\emptyset \neq P_M(x) \subseteq B_{2\rho}(\overline{x})$. 

    We next show uniqueness. Let $p\in P_M(x)$ and $w \in T_p M$. By \cite[Proposition 5.35]{lee2012smooth}, there is a smooth curve $\gamma:J\to \R\p n$ whose image is contained in $M$ where $J$ is an interval of $\R$ containing $0$, $\gamma(0) = p$, and $\gamma'(0)= w$. The optimality of $p$ implies that $|x-p|\leq |x-\gamma(t)|$ for all $t\in J$, so that $$|x-p|\p 2 \leq |(x-p)-(\gamma(t)-p)|\p 2 \leq |x-p|\p 2 - 2 \langle x-p,\gamma(t)-p\rangle+|\gamma(t)-p|\p 2$$ and $$2 \left\langle x-p,\frac{\gamma(t)-p}{|\gamma(t)-p|}\right\rangle \leq |\gamma(t)-p|.$$
    Passing to the limit yields $2\langle x-p,w\rangle \leq 0$. But since $-w\in T_pM$, we also have $2\langle x-p,-w\rangle \leq 0$, and so $\langle x-p,w\rangle = 0$. As $w$ was arbitrary, $v=x-p \in N_pM$. But then $x = p+v = E(p,v)\in B_\rho(\overline{x})\subseteq V_0$, so the projection $p$ is unique. Precisely, $P_M(x) = \{\pi \circ E|_{U_0}\p{-1}(x)\}$ where $\pi:NM\to M$ is the natural projection. The conclusion now follows by letting $U = E|_{U_0}\p{-1}(B_\rho(\overline{x}))$.
\end{proof}


\bibliographystyle{abbrv}    
\bibliography{references}
\end{document}